\newtheorem{theorem}{Theorem}
\newtheorem{lemma}[theorem]{Lemma}
\newtheorem{corollary}[theorem]{Corollary}
\newtheorem{proposition}[theorem]{Proposition}
\theoremstyle{definition}
\newtheorem{example}[theorem]{Example}
\newtheorem{definition}[theorem]{Definition}
\newtheorem{definition-lemma}[theorem]{Definition-Lemma}
\newtheorem{definition-theorem}[theorem]{Definition-Theorem}
\newtheorem{remark}[theorem]{Remark}
\newtheorem*{ack}{Acknowledgements}
\newcommand{\ac}{\textup{!`}}
\newcommand{\ot}{\otimes}
\renewcommand{\k}{\Bbbk}
\newcommand{\lra}{\longrightarrow}
\newcommand{\ra}{\rightarrow}
\newcommand{\sdp}{\times\kern-.2em\vrule height1.1ex depth-.05ex}
\newcommand{\epi}{\lra \kern-.8em\ra}
\newcommand{\ol}{\overline}
\newcommand{\hdot}{\;\raisebox{3.2pt}{\text{\circle*{2.5}}}}
\title[Koszul Calabi-Yau algebras]
{Batalin-Vilkovisky algebras and the noncommutative Poincar\'e duality of Koszul Calabi-Yau algebras}
\author{Xiaojun Chen, Song Yang}
\address{Department of Mathematics, Sichuan University, Chengdu,
Sichuan, 610064,  People's Republic of China}
\email{xjchen@scu.edu.cn, syang.math@gmail.com}
\author{Guodong Zhou}
\address{Department of Mathematics, Shanghai Key laboratory of PMMP,
East China Normal  University, Shanghai,
200241,  People's Republic of China}
\email{gdzhou@math.ecnu.edu.cn}
\begin{document}

\begin{abstract} Let $A$ be a Koszul Calabi-Yau algebra.
We show that  there exists an isomorphism of Batalin-Vilkovisky
algebras between the Hochschild cohomology ring of $A$ and that of its
Koszul dual algebra $A^!$. This confirms (a generalization of) a conjecture of R.~Rouquier.
\end{abstract}

\subjclass[2010]{14A22, 16E40, 16S38, 55U30}
\maketitle

\setcounter{tocdepth}{1}
\tableofcontents


\section{Introduction}

\renewcommand{\thetheorem}{\Alph{theorem}}

The notion of Calabi-Yau algebras is introduced by Ginzburg
(\cite{Ginzburg}), and has been intensively studied in recent years.
Among all Calabi-Yau algebras of great importance and interest are those which are
Koszul. For example, all Koszul Calabi-Yau algebras
are derived from a {\it potential}, which has become a center of the research,
while it is generally difficult to find a potential for non-Koszul Calabi-Yau algebras
(here by being Koszul we mean in the general sense; see Remark \ref{general_Koszul}).
The reader may refer to
\cite{BT,Bocklandt,BSW,Davison,Ginzburg,VdB3,WZ} for more details and examples.

In this paper we show that
for a Koszul Calabi-Yau algebra $A$,
its Hochschild cohomology is isomorphic to the Hochschild cohomology of its Koszul dual algebra $A^!$
as {\it Batalin-Vilkovisky algebras}:
\begin{equation}\label{iso_I}
\mathrm{HH}^{\bullet}(A;A)\simeq\mathrm{HH}^{\bullet}(A^!;A^!).
\end{equation}
This isomorphism has been folklore for years, and some results
are already well-known, such as the Batalin-Vilkovisky algebra structure on both sides.
Let us start with some backgrounds.

Let $A$ be an $n$-Calabi-Yau algebra (see Definition \ref{Def_CY}).
Ginzburg proves that there is a Batalin-Vilkovisky algebra structure
on the Hochschild cohomology of $A$ (\cite[Theorem 3.4.3]{Ginzburg}).
It may be viewed as a noncommutative generalization of the Batalin-Vilkovisky algebra on the
polyvector fields of Calabi-Yau manifolds.
Earlier than that, inspired by Chas-Sullivan's work \cite{CS} on string topology,
Tradler constructs,
for a (differential graded) cyclic associative algebra,
\textit{i.e.} an associative algebra with a non-degenerate cyclically invariant pairing,
a Batalin-Vilkovisky algebra on its Hochschild cohomology (\cite[Theorem 1]{Tradler}).
These two Batalin-Vilkovisky algebra structures are quite different from each other.
For example, consider the space of polynomials of $n$ variables $\mathbb{C}[x_1,x_2,\cdots, x_n]$;
it is a Calabi-Yau algebra in the sense of Ginzburg,
while there does not exist a non-degenerate pairing on it, and therefore Tradler's
construction does not apply.

On the other hand, Ginzburg remarks (\cite[Remark 5.4.10]{Ginzburg}) that
if the Calabi-Yau algebra $A$ is Koszul (in {\it ibid} he also assumes $A$ is of dimension 3, but this
turns out to be not necessary, as shall be shown below), then
its Koszul dual algebra $A^!$ admits a non-degenerate pairing,
and Tradler's construction can be applied to $A^!$.
From the work of
Buchweitz \cite{Buchweitz}, Beilinson-Ginzburg-Soergel \cite{BGS} and Keller \cite{Keller},
we now know that
$
\mathrm{HH}^{\bullet}  (A; A)\simeq\mathrm{HH}^{\bullet} (A^! ;A^!)
$
as Gerstenhaber algebras.
Ginzburg states as a conjecture, which he contributes to Rouquier ({\it ibid} \S5.4), that
this isomorphism is in fact an isomorphism of Batalin-Vilkovisky algebras,
given by Ginzburg and Tradler respectively.
The goal of this paper is to show that this is indeed the case:

\begin{theorem}[Rouquier's conjecture]\label{Main Thm}
Suppose that $A$ is a Koszul Calabi-Yau algebra,
and let $A^!$ be its Koszul dual algebra.
Then there is an isomorphism
$$
\mathrm{HH}^{\bullet}(A ;A)\simeq\mathrm{HH}^{\bullet}(A^{!} ;A^!)
$$
of Batalin-Vilkovisky algebras between the Hochschild cohomology of $A$ and $A^!$.
\end{theorem}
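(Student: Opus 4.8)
The plan is to reduce the statement to a question about a single degree $-1$ operator and then verify it through an explicit chain-level comparison furnished by Koszul duality. Recall that a Batalin-Vilkovisky algebra is a Gerstenhaber algebra $(H,\cup,\{-,-\})$ equipped with a square-zero operator $\Delta$ of degree $-1$ whose failure to be a derivation of the cup product recovers the bracket,
$$
\{a,b\}=(-1)^{|a|}\big(\Delta(a\cup b)-\Delta(a)\cup b-(-1)^{|a|}a\cup\Delta(b)\big).
$$
Since the Gerstenhaber isomorphism $\mathrm{HH}^{\bullet}(A;A)\cong\mathrm{HH}^{\bullet}(A^!;A^!)$ of Buchweitz, Beilinson-Ginzburg-Soergel and Keller is already available, it suffices to prove that this isomorphism intertwines the two BV operators, namely Ginzburg's $\Delta_A$ and Tradler's $\Delta_{A^!}$; the compatibility with the bracket is then automatic, and the square-zero property is preserved. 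I emphasize that I do not claim $\Delta$ is determined by the Gerstenhaber data — only that matching these two specific operators is enough.

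The key observation is that both operators have a common origin: each is obtained by transporting Connes' differential $B$ from Hochschild homology to Hochschild cohomology through a Poincar\'e-type duality. On the $A$-side, the $n$-Calabi-Yau structure provides a fundamental class $\omega\in\mathrm{HH}_n(A;A)$, and Van den Bergh duality is realized by the cap-product isomorphism $\cap\,\omega\colon\mathrm{HH}^{\bullet}(A;A)\xrightarrow{\ \sim\ }\mathrm{HH}_{n-\bullet}(A;A)$; Ginzburg's operator is then $\Delta_A=(\cap\,\omega)^{-1}\circ B\circ(\cap\,\omega)$. On the $A^!$-side, the non-degenerate cyclic pairing supplies a Poincar\'e duality $\mathrm{HH}^{\bullet}(A^!;A^!)\cong\mathrm{HH}_{\bullet}(A^!;A^!)$ (up to an internal-degree shift), and Tradler's operator is defined by conjugating $B$ through this duality. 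Thus the theorem reduces to showing that Koszul duality identifies not merely the cohomology rings but the entire package: the homology groups, the Connes differentials, and the two dualities.

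Accordingly, I would first set up an explicit chain-level comparison. Using the Koszul bimodule resolution of $A$ and the corresponding resolution of $A^!$, I would construct quasi-isomorphisms between the Hochschild cochain complexes, and separately between the Hochschild chain complexes, of $A$ and $A^!$, compatible with the regrading that converts the internal grading into the homological one. The cochain-level map is to be arranged so as to recover the known Gerstenhaber isomorphism in cohomology. Crucially, I would build both maps so that they respect the mixed-complex structure, that is, commute up to homotopy with Connes' operator $B$; this is the step that guarantees the two $B$'s are intertwined. Because the Koszul dual datum is literally the graded linear dual of the original, the duality pairings should be transparent at the level of these models, although one must track signs and the interplay between the internal and homological gradings with care.

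The heart of the argument, and the step I expect to be the main obstacle, is the \emph{noncommutative Poincar\'e duality} comparison: showing that the Van den Bergh duality $\cap\,\omega$ for $A$ and the cyclic-pairing duality for $A^!$ correspond under the Koszul duality isomorphisms. Concretely, one must check that the Calabi-Yau fundamental class $\omega$ is carried by the chain-level comparison to the class on the $A^!$-side implementing Tradler's duality — equivalently, that the cyclic pairing on $A^!$ is the image of the volume form of $A$. This amounts to locating the fundamental classes explicitly in the Koszul model and verifying their compatibility with cap product and with $B$; at bottom it is a uniqueness and normalization statement for the fundamental class, whose proof rests on the non-degeneracy supplied by the Calabi-Yau and cyclic conditions. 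Once this identification is secured, conjugating $B$ by either duality yields the same degree $-1$ operator on $\mathrm{HH}^{\bullet}$, so $\Delta_A$ and $\Delta_{A^!}$ agree under the isomorphism, and the map is an isomorphism of Batalin-Vilkovisky algebras.
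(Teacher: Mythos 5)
Your proposal is correct and follows essentially the same route as the paper: reduce to intertwining the two BV operators, realize both as conjugates of Connes' $B$ by the respective Poincar\'e dualities, compare chain and cochain complexes on the small Koszul models compatibly with $B$, and match the two dualities (the paper does this by showing both are induced by the single isomorphism $\psi\colon A^!_{-\bullet}\simeq A^{\ac}_{\bullet-n}$ coming from the Calabi--Yau structure, which settles your ``fundamental class'' normalization step automatically). The only point to keep in mind is that all the identifications hold with respect to the bigrading rather than the usual homological grading, which you do acknowledge.
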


In literature, the two Batalin-Vilkovisky algebra structures on
both sides have been further studied a lot;
see, for example, \cite{Abbaspour,Lambre,Menichi}.
However, relatively less is discussed on their relationships.
The theorem above gives a bridge to connect and unite them.
The key points to prove the above theorem are the following:
\begin{enumerate}
\item[$-$]
For an associative algebra $A$ and its Koszul dual {\it coalgebra} $A^{\ac}$, the
chain complex
$$
(A\otimes A^{\ac}, b),
$$
with the differential $b$ appropriately equipped,
computes the Hochschild homology of $A$ and $A^{\ac}$ simultaneously, that is,
we have canonical isomorphisms
\begin{equation}\label{Can_iso}
\mathrm{HH}_{\bullet} (A)\simeq\mathrm{H}_{\bullet} (A\otimes A^{\ac}, b)\simeq\mathrm{HH}_{\bullet} (A^{\ac}).
\end{equation}

\item[$-$] On both $\mathrm{HH}_{\bullet} (A)$ and $\mathrm{HH}_{\bullet} (A^{\ac})$ the Connes
differential operator
exists;
however, it is in general not easy to define a version of Connes operator on $(A\otimes A^{\ac}, b)$.
Nevertheless, we show that
the canonical isomorphism (\ref{Can_iso})
commutes with the Connes operator on $\mathrm{HH}_{\bullet} (A)$ and $\mathrm{HH}_{\bullet} (A^{\ac})$.

\item[$-$] Analogous to (\ref{Can_iso}), we show that for the Koszul algebra $A$,
there is a canonical complex $(A\otimes A^!, \delta)$ which computes the Hochschild
cohomology of $A$ and $A^!$ simultaneously, where $A^!$ is the Koszul dual {\it algebra}
of $A$, {\it i.e.} there are canonical isomorphisms
\begin{equation}\label{CanIsocohomology}\mathrm{HH}^{\bullet}(A; A)\simeq\mathrm{H}^{\bullet}(A\otimes A^!,\delta)
\simeq\mathrm{HH}^{\bullet}(A^!; A^!).\end{equation}

\item[$-$] By two versions of Poincar\'e duality, due to Van den Bergh and Tradler respectively,
$$
\mathrm{PD}: \mathrm{HH}^{\bullet}(A; A)\simeq\mathrm{HH}_{n-{\bullet}}(A),\quad
\mathrm{HH}^{{\bullet}}(A^!; A^!)\simeq\mathrm{HH}_{n-{\bullet}}(A^{\ac}),
$$
together with the above two results, one obtains the desired isomorphism
$$\mathrm{HH}^{\bullet}(A;A)\simeq\mathrm{HH}^{\bullet}(A^!;A^!),$$
where the Batalin-Vilkovisky operator on each side is the pull-back of the Connes
operator via the Poincar\'e duality.

\item[$-$] The isomorphisms (\ref{Can_iso}) and (\ref{CanIsocohomology}) are not true with respect to the usual grading. They are in fact isomorphism with respect to a certain bigrading; see \S\ref{Sect_HK}  for details.
\end{enumerate}

Quite recently, the above mentioned results of Ginzburg and Tradler are generalised to their twisted versions.
More precisely, when a twisted Calabi-Yau algebra or a Frobenius algebra has semisimple Nakayama automorphism, then its Hochschild cohomology ring is still a Batalin-Vilkovisky algebra; see Kowalzig and
Kr\"ahmer \cite[Theorem 1.7]{KK} and  Lambre, the third author and Zimmermann
\cite[Theorem 0.1]{LZZ14}.
We shall consider the twisted version of our main result and related applications in a future work.

This paper is organized as follows: \S\ref{Section:BarCobar}  recalls some basic facts about bar/cobar constructions and twisting morphisms. Our basic reference of this section is the recent book of Loday
and Vallette (\cite{LV}).
\S\ref{Sect_Hoch} collects the definitions of Hochschild and cyclic (co)homology of algebras and coalgebras;
\S\ref{Sect_Koszul} reviews some basic facts about Koszul algebras;
\S\ref{Sect_HK} computes the Hochschild (co)homology of Koszul algebras and their Koszul dual;
\S\ref{Sect_KCY} studies Koszul Calabi-Yau algebras; \S\ref{Sect_Proof} proves
Theorem \ref{Main Thm};   and the last \S\ref{Sect_App},
gives an application of the previous results to the cyclic homology
of Calabi-Yau algebras.

Throughout this paper, $\k$ denotes a field of arbitrary characteristic.
It is supposed to be of zero characteristic when talking about cyclic (co)homology.

\begin{ack}
The authors  would like to thank Farkhod Eshmatov,
Ji-Wei He and Dong Yang for many helpful conversations.
The first two authors are partially supported by  NSFC (No. 11271269).
The third author is partially supported by Shanghai Pujiang
Program (No. 13PJ1402800),  by NSFC
(No. 11301186) and by the Doctoral Fund of Youth Scholars of Ministry of Education of China
(No. 20130076120001).

Nearly at the same time when the first version of paper was put in arXiv,
a paper of Estanislao Herscovich also appeared  in arXiv (\cite{Herscovich}), which also
deals with Koszul duality and  Hochschild (co)homology.
There is certain overlap between our paper and his,
mostly in Sections 2-5. We  recommend his paper as a companion to ours. Estanislao proves that the Hochschild (co)homology of a Koszul algebra and that of its Koszul dual form dual differential calculi in a certain sense, 
 he, however,  does not consider Calabi-Yau algebras and Batalin-Vilkovisky structures.
\end{ack}


\setcounter{theorem}{0}
\renewcommand{\thetheorem}{\arabic{theorem}}

\section{Bar/cobar construction and twisting morphisms}\label{Section:BarCobar}

The goal of  this section is to   recall the bar/cobar construction and twisting morphisms;
for details, we refer the reader to Loday-Vallette \cite{LV}.

 In this paper, we shall use  chain complexes and homological grading everywhere,
 that is, a complex is of the form
$$
\xymatrixcolsep{3.5pc}
\xymatrix{\cdots \ar[r]^{d_{n+2}}&C_{n+1}\ar[r]^{d_{n+1}}&
C_n\ar[r]^{d_n}&C_{n-1}\ar[r]^{d_{n-1}}&\cdots
}$$
and the differential decreases the index.

Let $V$ be a $\k$-vector space, denote $V^*=\mathrm{Hom}_\k(V, \k)$ be its $\k$-dual.
For an abelian group $G$, written additively, a $G$-graded vector space $V=\oplus_{g\in G}V_g$
is a direct sum of vector spaces indexed by $G$. For a $G$-graded $\k$-vector space
$V=\oplus_{g\in G} V_g$, an element of $V_g$ is said to be homogenous of complete
degree $g$.  This $G$-graded $\k$-vector space $V=\oplus_{g\in G} V_g$ is locally finite
if for any $g\in G$, $\mathrm{dim}(V_g)<\infty$.  Denote by $V^\vee$ its graded dual, that is,
for $g\in G$, $(V^\vee)_g=(V_{-g})^*$. A linear map $f: V\to W$ between $G$-graded
vector spaces is said to be  graded  of complete degree $g\in G$, if for any $g'\in G$,
$f(V_{g'})\subseteq W_{g'+g}$. For two $G$-graded vector spaces $V$ and $W$,
their tensor product $V\ot W$ and the space of all graded linear maps of arbitrary complete degrees
$\mathrm{Hom}(V, W)$ are also $G$-graded.
We are mainly concentrated in the case where $G=\mathbb{Z}$ or $\mathbb{Z}\times \mathbb{Z}$.
When $G=\mathbb{Z}$, $G$-graded spaces are simply called graded and  the complete degree is
also called degree; when   $G=\mathbb{Z}\times \mathbb{Z}$, $G$-graded spaces are called
bigraded and the complete degree is denoted by (degree, weight), that is, we call the first
index ``degree" and the second ``weight". For $v\in V_i$ for
$G=\mathbb{Z}$ or $v\in V_{ij}$ for $G=\mathbb{Z}\times \mathbb{Z}$,
we write $\mathrm{deg}(v)=|v|=i$ and $\mathrm{weight}(v)=j$.
Given two $G$-graded vector spaces $V$ and $W$,  there exists a natural map
$V^\vee\ot W^\vee\to (V\ot W)^\vee$ sending $\varphi\ot \psi$ to the map
$v\ot w\mapsto (-1)^{\mathrm{deg}(\psi)\mathrm{deg}(v)} \varphi(v)\psi(w)$.
If $V$ and $W$ are locally finite,
then the above map is an isomorphism.  There is also a natural injection
 \begin{equation}\label{TransferingHomToTensorProduct}
 W\ot V^\vee\to \mathrm{Hom}(V, W): w\ot \varphi\mapsto (v\mapsto \varphi(v)w).
 \end{equation}
It is an isomorphism if $V$ is locally finite.

Given a bigraded vector space $V=V_{{\bullet}{\bullet}}$, denote by
$sV$ its degree shift, that is, $(sV)_{ij}=V_{i-1, j}$. We shall also use
the complete shift $V(\ell)$ such that $ V(\ell)_{ij}=V_{i-\ell, j-\ell}$.
Sometimes we  consider bigraded maps of complete degree $(r, 0)\in \mathbb{Z}\times \mathbb{Z}$,
that is, bigraded maps which preserve the weight; in this case, we shall say that these are graded
maps of degree $r$. Differentials are graded maps of degree $-1$ and square to zero, that is,
they preserve the weight and decrease the degree by $1$. In the following,
we sometimes write $\mathrm{deg}(f)=|f|$.

 A $\mathbb{Z}$-graded vector space $V$ is connected if $V_i=0$ for $i<0$ and $V_0=\k$,
 and it is $2$-connected if it is connected and $V_1=0$; a bigraded vector space is
 (Adams-)connected if $V_{ij}=0$ when $i\leq 0$ or $j\leq 0$ except $V_{00}=\k$.

 \indexspace

An associative algebra is a triple $(A,  \mu, \eta)$,  where $A$ is a vector space,
$\mu: A\otimes A\to A$ is the multiplication and $\eta: \k\to A$ is the unit,
which satisfy the obvious conditions. An algebra homomorphism is a linear map
between two algebras which commute with multiplications and units.
An augmented algebra is an algebra $(A, \mu, \eta)$ endowed with an
algebra homomorphism $\epsilon:   A\to \k$.
For an augmented algebra $A$, we have $A=\k 1_A \oplus \ol{A}$, where
$1_A=\eta(1)$ (thus $\k 1_A=\mathrm{Im}(\eta)$) and $\ol{A}=\mathrm{Ker}(\epsilon)$.
We can also define (bi-)graded algebras, augmented (bi-)graded algebras,
differential (bi-)graded algebras, augmented differential (bi-)graded algebras.
For these notions, multiplications, units and algebra maps are assumed to preserve
the degree (and also the weight in the bigrading setup), differentials
decrease the degree by $1$ (and preserve the weight in the bigrading setup).
For an (augmented diffential bigraded) algebra $A$,  $A^{op}$ denotes its opposite algebra,
which has the same underlying space (and the same unit, the same differential, the same
grading and the same augmentation) as $A$, but is
endowed with the new product $a*b=(-1)^{|a||b|}ba$.

A coassociative coalgebra is a triple $(C,  \Delta, \epsilon)$ where $C$ is a vector space,
$\Delta: C\to C\ot C$ is the comultiplication and $\epsilon:   C\to \k$ is the counit,
which satisfy the obvious conditions. A coalgebra homomorphism is a linear map
between two coalgebras which commute with comultiplications and counits.
A coaugmented coalgebra is a coalgebra $(C, \Delta, \epsilon)$ endowed with a coalgebra
homomorphism $\eta: \k \to C$. For a coaugmented coalgebra $C$, we have $C=\k 1_C\oplus \ol{C}$,
where $1_C=\eta(1)$ (thus $\k 1_C=\mathrm{Im}(\eta)$) and $\ol{C}=\mathrm{Ker}(\epsilon)$.
 We can also define (bi-)graded coalgebras, augmented (bi-)graded coalgebras, differential
 (bi-)graded coalgebras, coaugmented differential (bi-)graded coalgebras. For these notions,
 comultiplications, counits and coalgebra maps are assumed to preserve  the degree
 (and the weight in bigraded setup).  For a coaugmented coalgebra $C$,
 let $\overline\Delta^0=id:  C\to C$, $\overline\Delta^1=\overline\Delta: C\to C\ot C$ be $\ol{\Delta}(x)=\Delta(x)-x\ot 1-1\ot x$ and for
$n\geq 2$, $\ol\Delta^{n}=(\ol \Delta\ot \mathrm{Id}\ot \cdots \ot \mathrm{Id})\circ \ol\Delta^{n-1}: C\to C^{\ot n}$.
A coalgebra is conilpotent if $C=\cup_{i\geq 0} \mathrm{Ker}(\ol\Delta^i)$.
For a (coaugmented differential bigraded) coalgebra $C$,  $C^{coop}$
denoted its opposite coalgebra, which has the same underlying space
(and the same counit, the same differential, the same grading and the same coaugmentation)
as $C$, but is endowed with the new coproduct $\Delta_{C^{coop}}=\tau\circ \Delta_C$,
where $\tau: C\ot C\to C\ot C$ is the swap sending $c_1\ot c_2\mapsto (-1)^{|c_1||c_2|}c_2\ot c_1$.

Given a coalgebra $(C, \Delta, \epsilon_C)$ and an algebra $(A, \mu, \eta_A)$,
the linear space $\mathrm{Hom}(C, A)$ is an algebra with respect to the convolution product,
that is, for $f, g\in \mathrm{Hom}(V, W)$, $f*g=\mu\circ (f\ot g)\circ \Delta$.
The unit of this algebra is $\eta_A\circ \epsilon_C$.
When $C$ and $A$ are differential (bi-)graded, $\mathrm{Hom}(C, A)$ denotes the space of
(bi-)graded linear maps. In this case, $\mathrm{Hom}(C, A)$ is differential graded algebra
whose differential $\partial$ is induced from that of $A$ and $C$,
that is $\partial(f)=d_A\circ f-(-1)^{|f|}f\circ d_C$, where $d_A$ (resp. $d_C$)
is the differential over $A$ (resp. $C$).

For a differential (bi-)graded coalgebra $C$ and a differential (bi-)graded algebra $A$,
an element $\alpha\in \mathrm{Hom}(C, A)$ of degree $-1$ (or complete degree $(-1, 0)$
in the bigrading setup)  is said to be a twisting morphism if $\partial(\alpha)+\alpha*\alpha=0$
(\cite[Section 2.1.3]{LV}).  When $C$ is coaugmented  and $A$ is  augmented, a twisting
morphism is always assumed to satisfy $\alpha\circ \eta_C=0$ and $\epsilon_A\circ \alpha=0$.
Write $\mathrm{Tw}(C, A)$ for the space  of twisting morphisms.

For a differential (bi-)graded coalgebra $C$ and a differential (bi-)graded algebra $A$,
given a twisting morphism $\alpha$ from $C$ to $A$, one can define a new differential
over $\mathrm{Hom}(C, A)$, namely,  for $f\in \mathrm{Hom}(C, A)$,
define $\partial_\alpha(f)=\partial(f)+[\alpha, f]$, where $[\alpha, f]=\alpha*f-(-1)^{|\alpha|}f*\alpha$.
The triple $(\mathrm{Hom}(C, A), *, \partial_\alpha)$ is a differential (bi-)graded algebra, denoted by
$\mathrm{Hom}^\alpha(C, A)$, called the twisted convolution algebra (\cite[Proposition 2.1.6]{LV}).
When $C$ is locally finite, there exists an isomorphism of (bi-)graded vector spaces
$\mathrm{Hom}(C, A)\simeq A\ot C^\vee$,   and in this case, there is an induced
differential (bi-)graded algebra structure over
$A\ot  C^\vee$ and we denote it by $A\otimes^\alpha C^\vee$ \footnote{This notation seems to be new,
which does not appear in Loday-Vallette \cite{LV}.}.

Given  a differential (bi-)graded coalgebra $C$ and a differential (bi-)graded algebra $A$,
for an element $\alpha\in \mathrm{Hom}(C, A)$ of degree $-1$ (or complete degree $(-1, 0)$
in the bigrading setup), one can also define a new differential on the tensor product $C\ot A$ by
posing $d_\alpha=d_C\ot \mathrm{Id}_A+\mathrm{Id}_C\ot d_A+d^r_\alpha$, where
$d^r_\alpha=(\mathrm{Id}_C\ot \mu)\circ (\mathrm{Id}_C\ot
\alpha\ot \mathrm{Id}_A)\circ (\Delta\ot \mathrm{Id}_A)$.
This is a differential if (and only if) $\alpha$ is a twisting morphism (\cite[Lemma 2.1.5]{LV}).
 The tensor product $C\ot A$ endowed with $d_\alpha$ is denoted by
 $C\ot_\alpha A$, called the (right) twisted tensor product. One can also define
 left twisted tensor product $A\ot_\alpha C$ by defining the differential to be
 $d_C\ot \mathrm{Id}_A+\mathrm{Id}_C\ot d_A+d^l_\alpha$ where
 $d^l_\alpha=(\mu\ot\mathrm{Id}_C)\circ (\mathrm{Id}_A\ot
 \alpha\ot \mathrm{Id}_C)\circ (\mathrm{Id}_A\ot\Delta)$.

Let $\alpha: C\to A$ and $\alpha': C'\to A'$ be two twisting morphisms
from (bi-)graded coalgebras to (bi-)graded algebras.
Let $f: C\to C'$ (resp. $g: A\to A'$) be a homomorphism of
(bi-)graded coalgebras (resp. of (bi-)graded algebras).
We say that the two twisting morphisms are compatible if $g\circ \alpha= \alpha'\circ f$.
The Comparison Lemma for twisted tensor products (\cite[Lemma 2.1.9]{LV}) says that
if $f$ and $g$ are quasi-isomorphisms, then so is
$f\ot g: C\ot_\alpha A\to C'\ot_{\alpha'} A'$.

\bigskip

Let $A=\k 1_A\oplus \ol{A}$ be an augmented (bi-)graded algebra.
Its bar construction $\mathrm{B}(A)$ is defined to be the tensor
 coalgebra $T^c(s\ol{A})=\k\oplus s\ol{A}\oplus (s\ol{A})^{\ot 2}\oplus \cdots$.
We shall write an element of $\mathrm{B}(A)$ by
$[a_1|a_2|\cdots |a_n]\in (s\ol{A})^{\ot n}$ for $a_1, \cdots, a_n\in \ol{A}$.
We can put a bigrading on the bar construction $\mathrm{B}(A)$.
When $A$ is bigraded, $B(A)$ is naturally bigraded. When $A$ is not bigraded,
we give $A$ a bigrading as follows:  if $A$ is only an augmented ungraded algebra,
we place $A$ in degree zero,  and put  $\k 1_A$ in weight zero and $\ol{A}$ in weight  $1$;
if $A$ is  an augmented (differential) graded  algebra, we  place its original grading in the degree
grading and put  $\k 1_A$ in weight zero and $\ol{A}$ in weight  $1$.
As $T^c(s\ol{A})$ is a cofree conilpotent coalgebra, the map
$T^c(S\ol{A})\twoheadrightarrow s\ol{A}\ot s\ol{A}  \stackrel{f}{\to} s\ol{A}$  induces a coderivation on
$\mathrm{B}(A)$,  denoted by $d_2$. Here $f: s\ol{A}\ot s\ol{A}\to s\ol{A}$
sends $[a]\ot [b]$ to $(-1)^{\mathrm{deg}(a)}[ab]$. The differential of $A$ also induces
a differential $d_1$ on $\mathrm{B}(A)$. Now $(\mathrm{B}(A), d=d_1+d_2)$
is a differential bigraded conilpotent cofree coalgebra. The canonical map
$\pi: \mathrm{B}(A)\twoheadrightarrow s\ol{A} \simeq \ol{A} \rightarrowtail A$
is a twisting morphism and the right twisted tensor product
$\mathrm{B}(A)\ot_{\pi}A$ (resp. the left twisted tensor product $A\ot_{\pi}\mathrm{B}(A)$)
is quasi-isomorphic to $\k$ and thus is a free resolution of $\k$ as right module (resp. left module);
see \cite[Proposition 2.2.8]{LV}.
For a quasi-isomorphism of augmented  differential graded connected algebras $g: A\to A'$,
the induced map $\mathrm{B}(g): \mathrm{B}(A)\to \mathrm{B}(A')$ is also a quasi-isomorphism
(\cite[Proposition 2.2.3]{LV}).  The proof can be done with the help of a spectral sequence induced by
the weight grading on the bar construction, that is, $F_p(\mathrm{B}(A))=\oplus_{i\leq p} (s\ol{A})^{\ot i}$.

For a coaugmented (bi-)graded coalgebra $C$, its cobar construction $\Omega(C)$  is a
differential bigraded algebra. Let $C=\k 1_C\oplus \ol{C}$ be a coaugmented (bi-)graded coalgebra.
Its cobar construction $\Omega(C)$ is defined to be the tensor algebra $T(s^{-1}\ol{C})$.
We shall write an element of $\Omega(A)$ by $\langle a_1|a_2|\cdots |a_n\rangle \in (s^{-1}\ol{A})^{\ot n}$.
The bigrading on $\Omega(C)$ can be given similarly.
When $C$ is bigraded, $\Omega(C)$ is naturally bigraded. When $C$ is not bigraded,
we give $C$ a bigrading as follows:  if $C$ is only an augmented ungraded coalgebra,
we place $C$ in degree zero,  and put  $\k 1_C$ in weight zero and $\ol{C}$ in weight  $1$;
if $C$ is  an augmented (differential) graded  coalgebra, we  place its original grading in the
degree  grading and put  $\k 1_C$ in weight zero and $\ol{C}$ in weight  $1$.
As $T(s^{-1}\ol{C})$ is a free algebra, the map
$s^{-1}\ol{C} \stackrel{f}{\to} s^{-1}\ol{C}\ot s^{-1}\ol{C} \rightarrowtail T(s^{-1}\ol{C})$
induces a derivation $d_2$ on $\Omega(C)$,  where $f: s^{-1}\ol{C} \to s^{-1}\ol{C}\ot s^{-1}\ol{C}$
sends $\langle c \rangle $ to $(-1)^{|c_{(1)}|}\langle c_{(1)}\rangle \ot \langle c_{(2)}\rangle$.
The differential of $C$ also induces a differential $d_1$ on $\Omega(C)$.
Now $(\Omega(C), d=d_1+d_2)$ is a differential bigraded  free algebra. The map
$\iota: C\twoheadrightarrow \ol{C}\simeq s^{-1}\ol{C}\rightarrowtail \Omega (C)$
is a twisting morphism and the twisted tensor product
$C\ot_{\iota}\Omega(C)$ (and $\Omega(C)\ot_{\iota} C$) is quasi-isomorphic to $\k$;
see \cite[Proposition 2.2.8]{LV}.
For a quasi-isomorphism of augmented  differential graded $2$-connected
coalgebras $h: C\to C'$, the map $\Omega(h): \Omega(C)\to \Omega(C')$ is
also a quasi-isomorphism (\cite[Porposition 2.2.5]{LV}).
The proof can be done with the help of a spectral sequence induced by
the weight grading on the cobar construction, that is,
$F_p(\Omega(C))=\oplus_{i\geq -p} (s^{-1}\ol{C})^{\ot i}$.

The bar construction and the cobar construction form a pair of adjoint functors and represent the bifunctor
$\mathrm{Tw}(C, A)$.  In fact, given an augmented differential (Adams-)connected (bi-)graded algebra $A$
and a coaugmented differential (Adams-)connected conilpotent  (bi-)graded coalgebra $C$,
there exist bifunctorial isomorphisms
$$
\mathrm{Hom}(C, \mathrm{B}(A))\simeq \mathrm{Tw}(C, A)\simeq \mathrm{Hom}(\Omega(C), A),
$$
where the first $\mathrm{Hom}$ is in the category of augmented differential (bi-)graded coalgebras and
the second in the category of coaugmented differential connected conilpotent  (bi-)graded algebras.
Applying this adjunction to $C=\mathrm{B}(A)$, we obtain the counit
$\epsilon: \Omega\mathrm{B}(A)\to A$ and the universal twisting morphism $\pi: \mathrm{B}(A)\to A$
appeared above; similarly, we obtain the unit $\eta: C\to \mathrm{B}\Omega (C)$ and the universal
twisting morphism $\iota: C\to \Omega(C)$ mentioned before.
Given a twisting morphism $\alpha: C\to A$, it factors uniquely through $\pi$ and $\iota$:
$$\xymatrix{ & \Omega(C) \ar@{..>}[dr]^{g_\alpha}& \\
C\ar[ur]^{\iota} \ar[rr]^\alpha \ar@{..>}[dr]_{f_\alpha}& & A\\ & \mathrm{B}(A)\ar[ur]_\pi& }$$
where $f_\alpha$ is a homomorphism of augmented differential (bi-)graded algebras and $g_\alpha$ a homomorphism between  coaugmented differential connected conilpotent  (bi-)graded coalgebras.

A \textit{fundamental result about twisting morphisms} (\cite[Theorem 2.3.1]{LV}
is the following: Let $C$ (resp. $A$) be a connected (bi-)graded differential coalgebra
(resp. algebra) and let $\alpha: C\to A$ be a twisting morphism.
Then $C\ot_\alpha A$ is a resolution of $\k$ if and only if the induced
homomorphism of differential (bi-)graded algebras $g_\alpha: \Omega(C)\to A$
corresponding to $\alpha$ is an quasi-isomorphism if and only if the induced
homomorphism of differential (bi-)graded coalgebras
$f_\alpha: C\to \mathrm{B}(A)$ corresponding to $\alpha$ is an quasi-isomorphism.
If this is the case, we call such a twisting morphism \textit{Koszul morphism}.

Given a twisting morphism $\alpha: C\to A$, one can define a twisted differential on
$A\ot C\ot A$; see \cite[Exercise 2.7.6]{LV}. In fact,
let $d_\alpha=d_{A\ot C\ot A}+\mathrm{Id}_A\ot d^r_\alpha-d^l_\alpha\ot \mathrm{Id}_A$.
Then it is a differential and write $A\ot_\alpha C\ot_\alpha A$ for this new complex.
There is an isomorphism of complexes
\begin{equation}\label{BimoduleResolutionComputingtwisteHom}
\mathrm{Hom}^\alpha(C, A)\simeq \mathrm{Hom}_{A^e}(A\ot_\alpha C\ot_\alpha A, A).
\end{equation}
Furthermore the bimodule homomorphism
$A\ot_\alpha C\ot_\alpha A\twoheadrightarrow A\ot \k \ot A\twoheadrightarrow A$
is an quasi-isomorphism of bimodules if and only if $\alpha$ is a Koszul morphism.
So when $\alpha$ is a Koszul morphism, $A\ot_\alpha C\ot_\alpha A$ is bimodule free resolution of $A$.
Let $\alpha': C'\to A$ be another twisting morphism. Suppose that   $\alpha$ and $\alpha'$ are compatible
via $f: C\to C'$. Then there is an induced morphism of compexes
$A\ot_\alpha C\ot_\alpha A\to A\ot_{\alpha'} C'\ot_{\alpha'} A$ and this induces
  $\mathrm{Hom}^{\alpha'}(C', A)\to \mathrm{Hom}^\alpha(C, A)$.
When $C$ and $C'$ are locally finite, this in turn induces
\begin{equation}\label{InducedMorphismBetweenNewTwistedTensorProduct}
A\ot^{\alpha'}{C'}^\vee\to A\ot^{\alpha}C^\vee.\end{equation}

Let $C$ be  an (Adams-)connected differential (bi-)graded coalgebra
$C$ and let $A$ be an (Adams-) connected differential (bi-)graded algebra $A$.
Suppose that they are locally finite, so $C^\vee$ is an algebra and $A^\vee$ becomes a coalgebra.
Given a twisting morphism $\alpha: C\to A$, then its dual
$\alpha^\vee: A^\vee\to C^\vee$ is also a twisting morphism. It is easy to show that the natural map
$$\mathrm{Hom}^\alpha(C, A)\to \mathrm{Hom}^{\alpha^\vee}
(A^\vee, C^\vee), \varphi\mapsto (f\mapsto (-1)^{|f||\varphi|} f\circ \varphi)$$
is an quasi-isomorphism of differential (bi-)graded algebras.
When $C$ and $A$ are locally finite, this translates into an isomorphism
\begin{equation}\label{DualTwistingMorphismsInducingIsomorphism}
A\ot^\alpha C^{\vee}\simeq  C^\vee\ot^{\alpha^\vee} A.
\end{equation}

\section{Hochschild homology of algebras and coalgebras}\label{Sect_Hoch}

This section briefly recalls the definitions of Hochschild and cyclic homology
of algebras and coalgebras.
The materials are well-known, however, it is rare to find a reference which
treats algebras and coalgebras simultaneously.
They will be used in later sections.
We shall always deal with (co)augmented ordinary (co)algebras in order to avoid
complicated expressions of signs. The reader may add the signs himself using the Koszul sign rule.

\subsection{Hochschild homology of algebras}

\begin{definition}[Hochschild homology]
Suppose that $A$ is an algebra. The {\it Hochschild chain complex} of $A$
is the graded vector space
$$\mathrm{CH}_{\bullet}(A):=\bigoplus_{n\ge 0}A\otimes A^{\otimes n}$$
with differential $b:\mathrm{CH}_{\bullet}(A)\to\mathrm{CH}_{{\bullet}-1}(A)$ defined by
\begin{equation}\label{Hochbcpx}
b(a_0, a_1,\cdots, a_n):=\sum_{i=0}^{n-1}(-1)^i(a_0, \cdots, a_{i-1}, a_ia_{i+1},\cdots, a_n)+(-1)^{n}(a_na_0, a_1,\cdots, a_{n-1}).
\end{equation}
The associated homology is called the {\it Hochschild homology} of $A$ and is denoted
by $\mathrm{HH}_\bullet(A)$.
\end{definition}


As we have assumed that $A$ is augmented and unital, let $\overline A$ be the augmentation ideal, then
$\mathrm{CH}_{\bullet}(A)$ is quasi-isomorphic to the {\it normalised  Hochschild complex}
$$\overline{\mathrm{CH}}_{\bullet}(A):=A\ot \mathrm{B}(A)=\bigoplus_{n\geq 0} A\ot \ol{A}^{\ot n}$$
with the induced differential $b$ (see, for example, Loday \cite[Proposition 1.6.5]{Loday}).
It is easy to see that there is an isomorphism of complexes
$$\overline{\mathrm{CH}}_{\bullet}(A)\simeq A\ot_{A^e} (A\ot_\pi \mathrm{B}(A)\ot_\pi A).$$

From now on, when mentioning the normalised  Hochschild complex,
we take this identification.

\begin{definition}[Connes cyclic operator]
Let $A$ be an (augmented differential graded)  algebra.
The {\it Connes cyclic operator}
$$B: \overline{\mathrm{CH}}_{\bullet}(A)\to\overline{\mathrm{CH}}_{{\bullet}+1} (A)$$
is defined by
$$
B(a_0, a_1,\cdots, a_n)
:=\sum_{i=0}^{n} (-1)^{(i-1)(n+1)} (1,   a_i, \cdots,   a_n, a_0,\cdots, a_{i-1}),
$$
where, by abuse of notations, we identify an element of $A$ with its  image
under the natural map $A\to\overline A$.
\end{definition}

It is easy to check that $B^2=0$ and $Bb+bB=0$, and hence
$$(\overline{\mathrm{CH}}_{\bullet}(A), b, B)$$
defines a {\it mixed complex}, in the sense of Kassel \cite{Kassel}.

\begin{definition}[Cyclic homology; Jones \cite{Jones}]\label{def_HC}
Let $A$ be an algebra.
Let $\ol{\mathrm{CH}}_{\bullet}(A)$ be the normalised  Hochschild complex of $A$ (in fact, this
definition applies to any mixed complex),
and
$u$ be a free variable of degree $-2$, which commutes with $b$ and $B$.
The (reduced) {\it negative cyclic, periodic cyclic, }
and {\it cyclic} chain complex of $A$ are the following complexes
\begin{eqnarray*}
(\overline{\mathrm{CH}}_{\bullet}(A)[[u]], b+uB),&& \\
(\overline{\mathrm{CH}}_{\bullet}(A)[[u, u^{-1}]], b+uB),&& \\
(\overline{\mathrm{CH}}_{\bullet}(A)[[u, u^{-1}]]/u\overline{\mathrm{CH}}_{\bullet}(A)[[u]], b+uB),
\end{eqnarray*}
and are denoted by $\mathrm{CC}^{-}_{\bullet}(A),\mathrm{CC}^{\mathrm{per}}_{\bullet}(A)$
and $\mathrm{CC}_{\bullet}(A)$ respectively.
The associated homology are called the {\it negative cyclic, periodic cyclic}
and {\it cyclic homology} of $A$, and are denoted by
$\mathrm{HC}_{\bullet}^{-}(A)$, $\mathrm{HC}_{\bullet}^{\mathrm{per}}(A)$ and
$\mathrm{HC}_{\bullet}(A)$ respectively.
\end{definition}

From the definition, we see that there is a short exact sequence
$$0\longrightarrow
\ol{\mathrm{CH}}_{\bullet}(A)
\longrightarrow
\mathrm{CC}_{\bullet}(A)
\stackrel{\cdot u}\longrightarrow
\mathrm{CC}_{{\bullet}-2}(A)
\longrightarrow
0,$$
which induces on the homology level the so-called {\it Connes exact sequence}:
\begin{equation}
\cdots\longrightarrow
\mathrm{HH}_{\bullet}(A)
\longrightarrow
\mathrm{HC}_{\bullet}(A)
\longrightarrow
\mathrm{HC}_{{\bullet}-2}(A)
\longrightarrow
\mathrm{HH}_{{\bullet}-1}(A)
\longrightarrow
\cdots.
\end{equation}

\begin{definition}[Hochschild cohomology]\label{Def_HochCo}
Let $A$ be an associative algebra, and $M$ be an $A$-bimodule.
The {\it Hochschild cochain complex} $\mathrm{CH}^{\bullet} (A; M)$ of $A$ with value in $M$
is the complex whose underlying space is
$$
\bigoplus_{n\ge 0}\mathrm{Hom}(A^{\otimes n}, M)
$$
with coboundary $\delta:\mathrm{Hom}(A^{\otimes n}, M)\to \mathrm{Hom}(A^{\otimes n+1}, M)$ defined by
\begin{eqnarray}\label{Hoch_diff}
 (\delta f)(a_0, a_1,a_2,\cdots,a_n) &=& a_0 f(a_1,\cdots,a_n)+
 \sum_{i=0}^{n-1}(-1)^{i+1} f(a_0,\cdots, a_ia_{i+1},\cdots,a_n) \nonumber\\
    && +(-1)^{n}f(a_0,\cdots,a_{n-1})a_n.
\end{eqnarray}
The associated cohomology is called the {\it Hochschild cohomology} of $A$ with value in $M$,
and is denoted by $\mathrm{HH}^{\bullet}(A; M)$.
In particular, if $M=A$,
then $\mathrm{HH}^{\bullet}(A; A)$  is called the {\it Hochschild cohomology}
of $A$.
\end{definition}

\begin{definition}
Let $A$ be an associative algebra and let $\mathrm{CH}^{\bullet}(A;A)$
be its Hochschild cochain complex.
\begin{enumerate}
\item[$(1)$]
The {\it Gerstenhaber cup product} on
$\mathrm{CH}^{\bullet}(A;A)$
is defined as follows: for any $f\in \mathrm{CH}^{n}(A;A)$ and $g \in \mathrm{CH}^{m}(A;A)$,
$$
f\cup g(a_1,\ldots,a_{n+m}):=(-1)^{nm}f(a_1,\ldots,a_{n})g(a_{n+1},\ldots,a_{n+m}).
$$

\item[$(2)$]
For any $f\in \mathrm{CH}^{n}(A;A)$ and $g \in \mathrm{CH}^{m}(A;A)$,
let
\begin{eqnarray*}
&&
f\circ g (a_1,\ldots,a_{n+m-1})\\
&:=&\sum^{n-1}_{i=1}(-1)^{(m-1)(i-1)}f(a_1,\ldots,a_{i-1},g(a_{i},\ldots,a_{i+m-1}),a_{i+m},\ldots,a_{n+m-1}).
\end{eqnarray*}

The {\it Gerstenhaber bracket} on $\mathrm{CH}^{\bullet}(A;A)$
is defined to be
$$
\{f,g\}:=f\circ g-(-1)^{(n-1)(m-1)}g\circ f.
$$
\end{enumerate}
\end{definition}

Both the Gerstenhaber product and the Gerstenhaber
bracket induce a well-defined product and bracket on Hochschild cohomology
$\mathrm{HH}^{\bullet}(A;A)$, which makes $\mathrm{HH}^{\bullet}(A;A)$
into a Gerstenhaber algebra.
Recall that a {\it Gerstenhaber algebra} is
a graded commutative algebra together with a degree $-1$
Lie bracket $\{-,-\}$ such that
$$
a\mapsto\{a,b\}
$$
are derivations with respect to the product.

\begin{theorem}[Gerstenhaber]\label{G-algebra}
Let $A$ be an algebra.
Then the Hochschild cohomology $\mathrm{HH}^{{\bullet}}(A;A)$ of $A$ equipped with the
Gerstenhaber cup product and the Gerstenhaber bracket forms a Gerstenhaber algebra.
\end{theorem}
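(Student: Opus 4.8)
The plan is to obtain the entire Gerstenhaber structure as a formal consequence of a single algebraic gadget on the cochain complex, namely the pre-Lie (brace) product underlying the operation $f \circ g$ already written down above. First I would refine $f \circ g$ into its constituent \emph{insertion operations}: for $f \in \mathrm{CH}^{n}(A;A)$ and $g \in \mathrm{CH}^{m}(A;A)$ put $(f \circ_i g)(a_1, \ldots, a_{n+m-1}) = f(a_1, \ldots, a_{i-1}, g(a_i, \ldots, a_{i+m-1}), a_{i+m}, \ldots, a_{n+m-1})$, so that $f \circ g = \sum_i (-1)^{(m-1)(i-1)} f \circ_i g$ in agreement with the displayed formula. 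The core computation is Gerstenhaber's pre-Lie identity, which compares $(f \circ g) \circ h$ with $f \circ (g \circ h)$ and shows that their difference is symmetric in $g$ and $h$ up to the expected Koszul sign. This is a purely combinatorial bookkeeping of where $g$ and $h$ land among the arguments, separating the \emph{nested} insertions (where $h$ lands inside $g$) from the \emph{disjoint} ones.

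Given the pre-Lie identity, the graded antisymmetry and the graded Jacobi identity for $\{f,g\} = f \circ g - (-1)^{(n-1)(m-1)} g \circ f$ follow formally, since the commutator of any graded pre-Lie product is automatically a graded Lie bracket of degree $-1$. Thus $\mathrm{CH}^{\bullet}(A;A)$ is already a graded Lie algebra on the shifted complex, with no reference yet to the differential. Next I would bring in the multiplication $\mu \in \mathrm{CH}^{2}(A;A)$ and record the two identities that tie everything to the Hochschild coboundary: associativity of $A$ is exactly $\{\mu,\mu\} = 0$, and $\delta f$ agrees, up to a sign fixed by the conventions in \eqref{Hoch_diff}, with $\{\mu, f\}$. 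From $\{\mu,\mu\} = 0$ together with the Jacobi identity one gets $\delta^2 = 0$ for free and, more importantly, that $\delta$ is a graded derivation of the bracket; hence $\{-,-\}$ descends to a well-defined degree $-1$ Lie bracket on $\mathrm{HH}^{\bullet}(A;A)$.

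In parallel I would check directly, at the level of cochains, that $\cup$ is associative and that $\delta$ is a graded derivation for $\cup$, so that $\cup$ descends to an associative product on $\mathrm{HH}^{\bullet}(A;A)$. Two statements then remain, and both fail on cochains and hold only after passing to cohomology: the graded commutativity of $\cup$, and the Poisson (Leibniz) compatibility asserting that $\{a,-\}$ is a derivation of $\cup$. For each I would exhibit an explicit homotopy built from the $\circ_i$ operations: $f \cup g - (-1)^{|f||g|} g \cup f$ is a coboundary modulo terms vanishing on cocycles, and similarly $\{f, g \cup h\} - \{f,g\} \cup h - (-1)^{(|f|-1)|g|} g \cup \{f,h\}$ is expressible as $\delta(\cdots) \pm (\cdots \delta \cdots)$ through the brace operation. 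Assembling these pieces shows that $\mathrm{HH}^{\bullet}(A;A)$ is a Gerstenhaber algebra.

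I expect the main obstacle to be precisely these last two homotopy formulas. Everything up to and including the Jacobi identity and the descent of both operations to cohomology is either formal (from the pre-Lie identity) or a routine derivation check; by contrast, the commutativity of the cup product and the Poisson relation are the only genuinely non-formal inputs, and the sign bookkeeping in the homotopies relating $\cup$, the $\circ_i$, and $\delta$ is where essentially all of the real work concentrates.
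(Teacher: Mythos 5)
Your proposal is correct and follows precisely the classical argument of Gerstenhaber --- the pre-Lie identity for the insertion operations $\circ_i$, the identification of $\delta$ with $\pm\{\mu,-\}$, and the two homotopy formulas giving commutativity of $\cup$ and the Leibniz compatibility on cohomology --- which is exactly what the paper invokes, since its proof of this theorem consists solely of the citation to Gerstenhaber's Theorems 3--5. Nothing further is needed.
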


\begin{proof}
For a proof, see Gerstenhaber \cite[Theorems 3-5]{Gerstenhaber}.
\end{proof}

Similarly to the Hochschild homology case, one may
introduce the reduced Hochschild cochain complex $\overline{\mathrm{CH}}^{\bullet}(A;A)$
(see Loday \cite[\S1.5.7]{Loday}).
It turns out that
$$\overline{\mathrm{CH}}^{\bullet}(A;A)
\simeq\mathrm{Hom}^\pi(\mathrm B(A), A),$$
where  the RHS is the twisted convolution algebra induced by the canonical Koszul morphism
$\pi: \mathrm{B}(A)\to A$.


%



\begin{proposition}\label{Interpret_HH}
Let $A$ be an algebra.
Denote by $A^e=A\otimes A^{\mathrm{op}}$ the enveloping algebra of $A$.
View $A$ as an $A^e$-module, then
$$
\mathrm{HH}_{\bullet} (A)\simeq\mathrm{Tor}^{A^e}_{\bullet} (A,A),\quad
\mathrm{HH}^{\bullet}(A)\simeq\mathrm{Ext}^{\bullet}_{A^e}(A,A).
$$
\end{proposition}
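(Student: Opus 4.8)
The plan is to exhibit an explicit free resolution of $A$ in the category of $A^e$-modules and then read off the two derived functors $\mathrm{Tor}^{A^e}_\bullet(A,A)$ and $\mathrm{Ext}^\bullet_{A^e}(A,A)$ directly from it, matching the outcome against the normalised Hochschild complexes. The natural candidate, and the one the earlier sections have already set up, is the two-sided bar resolution $A\ot_\pi \mathrm{B}(A)\ot_\pi A$ attached to the canonical twisting morphism $\pi:\mathrm{B}(A)\to A$, equipped with the twisted differential $d_\pi=d_{A\ot \mathrm{B}(A)\ot A}+\mathrm{Id}_A\ot d^r_\pi-d^l_\pi\ot \mathrm{Id}_A$.

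The one step with genuine content is verifying that $A\ot_\pi \mathrm{B}(A)\ot_\pi A$ is a \emph{free} $A^e$-module resolution of $A$. Freeness is immediate: forgetting the differential, each homogeneous component is $A\ot \ol{A}^{\ot n}\ot A$, a free $A$-bimodule on the basis $\ol{A}^{\ot n}$, since $A^e=A\ot A^{\mathrm{op}}$ acts through the two outer tensor factors. That the augmentation $A\ot_\pi \mathrm{B}(A)\ot_\pi A\twoheadrightarrow A\ot \k\ot A\twoheadrightarrow A$ is a quasi-isomorphism of bimodules is exactly the statement recalled just after \eqref{BimoduleResolutionComputingtwisteHom}, because $\pi$ is a Koszul morphism (the right twisted tensor product $\mathrm{B}(A)\ot_\pi A$ resolves $\k$, as noted in \S\ref{Section:BarCobar}). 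Hence we have a bona fide free resolution, legitimate for computing both derived functors.

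With the resolution in hand the two identifications are formal. For homology, applying $A\ot_{A^e}(-)$ gives, by the identification recorded in the homology subsection above, $A\ot_{A^e}\big(A\ot_\pi \mathrm{B}(A)\ot_\pi A\big)\simeq \ol{\mathrm{CH}}_\bullet(A)$, whose homology is $\mathrm{HH}_\bullet(A)$; thus $\mathrm{Tor}^{A^e}_\bullet(A,A)\simeq \mathrm{HH}_\bullet(A)$. For cohomology, applying $\mathrm{Hom}_{A^e}(-,A)$ and invoking \eqref{BimoduleResolutionComputingtwisteHom} gives $\mathrm{Hom}_{A^e}\big(A\ot_\pi \mathrm{B}(A)\ot_\pi A, A\big)\simeq \mathrm{Hom}^\pi(\mathrm{B}(A),A)\simeq \ol{\mathrm{CH}}^\bullet(A;A)$, whose cohomology is $\mathrm{HH}^\bullet(A;A)$; thus $\mathrm{Ext}^\bullet_{A^e}(A,A)\simeq \mathrm{HH}^\bullet(A;A)$.

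The main point requiring care is bookkeeping about normalisation rather than any real obstacle: the derived functors are computed here from the reduced complexes $\ol{\mathrm{CH}}_\bullet(A)$ and $\ol{\mathrm{CH}}^\bullet(A;A)$, whereas $\mathrm{HH}_\bullet(A)$ and $\mathrm{HH}^\bullet(A;A)$ were defined via the unnormalised complexes. I would therefore cite the standard quasi-isomorphisms between the unnormalised and normalised complexes (Loday \cite[Proposition 1.6.5]{Loday} for chains, \cite[\S1.5.7]{Loday} for cochains), which are exactly the identifications already invoked in this section. Once this is recorded, no further calculation is needed: every sign and the whole differential $d_\pi$ have been packaged into the twisted tensor product, so both displayed isomorphisms are immediate from the machinery of \S\ref{Section:BarCobar}. (As a sanity check, one could instead give the classical argument directly, writing down the contracting homotopy $h(a_0\ot\cdots\ot a_n\ot a_{n+1})=1\ot a_0\ot\cdots\ot a_{n+1}$ to prove acyclicity of the bar resolution by hand, but the twisting-morphism formulation subsumes this.)
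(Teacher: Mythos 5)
Your proof is correct, and it is essentially the standard argument: the paper itself gives no proof beyond citing Weibel \cite[Lemma 9.1.3]{Weibel}, whose proof is exactly the bar-resolution computation you carry out (here repackaged in the twisting-morphism language of \S\ref{Section:BarCobar}, using identifications the paper has already recorded after the definitions of $\ol{\mathrm{CH}}_\bullet(A)$ and $\ol{\mathrm{CH}}^\bullet(A;A)$). Your attention to the normalised-versus-unnormalised bookkeeping is the right point to flag and is handled correctly.
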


\begin{proof}
See, for example, Weibel \cite[Lemma 9.1.3]{Weibel}.
\end{proof}

The {\it cyclic cochain complex} of an associative algebra $A$
is defined to be dual complex of $\mathrm{CC}_{\bullet}(A)$, with
the dual differential $b^*$ and $B^*$ respectively.
Let $v$ be the dual variable of $u$, which is of degree 2. Then $\mathrm{CC}^{\bullet}(A)$
is a module over $\k[[v]]$.

There is no short exact sequence which relates the Hochschild cochain complex
$\mathrm{CH}^{\bullet}(A;A)$
and the cyclic cochain complex $\mathrm{CC}^{\bullet}(A)$;
instead, we consider the dual complex $\mathrm{Hom}(\mathrm{CH}_{\bullet}(A), \k)$ of
$\mathrm{CH}_{\bullet}(A)$  and then
$$
\xymatrixcolsep{3.5pc}
\xymatrix{0\ar[r]&
v\cdot\mathrm{CC}^{{\bullet}-2}(A)\ar[r]^-{\mathrm{embedding}}&
\mathrm{CC}^{{\bullet}}(A)\ar[r]^-{\mathrm{projection}}&
\mathrm{Hom}(\mathrm{CH}_{\bullet}(A), \k)\ar[r]&
0}$$
is exact,
which induces the Connes long exact sequence on the cohomology level
$$
\cdots\longrightarrow
\mathrm{Hom}(\mathrm{HH}_{\bullet}(A), \k)\longrightarrow
\mathrm{HC}^{{\bullet}-1}(A)\longrightarrow
\mathrm{HC}^{{\bullet}+1}(A)\longrightarrow
\mathrm{Hom}(\mathrm{HH}_{\bullet}(A), \k)\longrightarrow\cdots,
$$
where the isomorphism $\mathrm{H}^{\bullet}(v\cdot\mathrm{CC}^{\bullet}(A))
\simeq \mathrm{HC}^{{\bullet}-2}(A)$
is used, due to the isomorphism of chain complexes
$v\cdot \mathrm{CC}^{\bullet} (A)\stackrel{/v}\simeq\mathrm{CC}^{{\bullet}}(A)$.

\subsection{Hochschild homology of coalgebras}

The Hochschild homology of {\it coalgebras}
arises from Algebraic topology as examples of cosimplicial objects ({\it cf.} Eilenberg-Moore \cite{EM}).

\begin{definition}[Hochschild homology of coalgebras]\label{Hoch_coalg}
Suppose that $C$ is a coaugmented coalgebra with a counit and co-augmentation such that
$C=\k\oplus \overline C$.
Write the coproduct by $\Delta(c)=\sum_{(c)} c'\otimes c''+c\ot 1+1\ot c$, for any $c\in \ol{C}$,
then the {\it reduced Hochschild complex} is
$$
\overline{\mathrm{CH}}_{\bullet}(C):=\Omega(C)\ot C=\bigoplus_{n\ge 0} \overline C^{\otimes n}\otimes C
$$
with the Hochschild differential $b$ and the Connes cyclic operator $B$ defined by
\begin{eqnarray}
b( a_1,\cdots,  a_n,a_0)&:=&
\sum_{i=1}^{n}\sum_{(a_i)}
(-1)^{i}(a_1,\cdots, a_{i-1},\overline a_i',\overline a_i'',\cdots, a_n, a_0)\nonumber\\
&&+(-1)^{n+1}\cdot\sum_{(a_0)}((a_1,\cdots, a_n,\overline a_0', a_0'')
-(\overline a_0'', a_1,\cdots, a_{n}, a_0')),\label{Hochbcopx_coalg}\\
B(a_1,\cdots, a_n, a_0)&:=&
\sum _i(-1)^{i(n-i)}\varepsilon(a_0)(a_{i+1},\cdots, a_n, a_1,\cdots,
a_{i-1}, a_i),\nonumber
\end{eqnarray}
respectively, where $\overline c$ is again the image of $c\in C$ in $\overline C$,
and $\varepsilon: C\to\k$ is the counit.
The homology $(\overline{\mathrm{CH}}_{\bullet} (C), b)$ is the {\it Hochschild homology} of $C$ (denoted by
$\mathrm{HH}_{\bullet} (C)$),
and the cyclic homology of the mixed complex $(\overline{\mathrm{CH}}_{\bullet} (C), b, B)$
is the {\it cyclic homology} of $C$.
\end{definition}

We leave to the interested readers to check that in the above definition
$b^2=B^2=bB+Bb=0$.


\begin{proposition}\label{Alg_Coalg}
Suppose that $C$ is a finite dimensional coalgebra.
Let $A:=\mathrm{Hom}_{\k}(C,\k)$ be its dual algebra.
Then
$$\mathrm{HC}_{\bullet}^{-}(C)\simeq\mathrm{HC}^{-{\bullet}}(A).$$
\end{proposition}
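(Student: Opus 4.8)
The plan is to realise the statement as an instance of linear duality between the cyclic mixed complex of $A$ and that of $C$, exploiting that $C$ is finite dimensional so that $A=\mathrm{Hom}_\k(C,\k)$ and $C=\mathrm{Hom}_\k(A,\k)$ are mutually dual and $\ol A=(\ol C)^*$. First I would identify the two reduced Hochschild complexes as graded duals of one another. Reordering tensor factors and using finite dimensionality gives, for each $n\ge 0$, a natural isomorphism
$$\ol C^{\ot n}\ot C\;\simeq\;\mathrm{Hom}(A\ot\ol A^{\ot n},\k),$$
so that $\ol{\mathrm{CH}}_\bullet(C)$, graded by placing $\ol C^{\ot n}\ot C$ in degree $-n$ (which is what makes the differential of Definition \ref{Hoch_coalg} lower degree by one), becomes the graded dual $\ol{\mathrm{CH}}_\bullet(A)^\vee$ in the sense of \S\ref{Section:BarCobar}, with $\ol{\mathrm{CH}}_{-n}(C)=(\ol{\mathrm{CH}}_n(A))^*$.

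Next I would check that this identification is one of \emph{mixed complexes}. The Hochschild differential and Connes operator of the coalgebra $C$ in Definition \ref{Hoch_coalg} are written so as to be the transposes of the corresponding operators $b$ and $B$ on $\ol{\mathrm{CH}}_\bullet(A)$: the comultiplication $\Delta$ dualises the multiplication $\mu$, the counit dualises the unit, and the cyclic permutations defining $B$ are the same on both sides. Thus a direct comparison of the explicit formulas, keeping track of the Koszul signs, should yield $b_C=b_A^\vee$ and $B_C=B_A^\vee$, i.e.
$$\big(\ol{\mathrm{CH}}_\bullet(C),b,B\big)\;\simeq\;\big(\ol{\mathrm{CH}}_\bullet(A),b,B\big)^\vee$$
as mixed complexes.

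Finally I would dualise the cyclic chain complex. Writing $M=\ol{\mathrm{CH}}_\bullet(A)$, the cyclic complex is $\mathrm{CC}_\bullet(A)=M\ot\big(\k[[u,u^{-1}]]/u\k[[u]]\big)$ with differential $b+uB$, and $\mathrm{CC}^\bullet(A)=\mathrm{Hom}(\mathrm{CC}_\bullet(A),\k)$ by definition. The key observation is that the two coefficient modules are linearly dual: $\mathrm{Hom}\big(\k[[u,u^{-1}]]/u\k[[u]],\k\big)\simeq\k[[u]]$, the transpose of multiplication by $u$ on the quotient being multiplication by $u$ on $\k[[u]]$, with the dual variable again of degree $-2$. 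Because $A$ is finite dimensional and $M$ is concentrated in non-negative degrees, in each fixed degree only finitely many summands occur and no completion issue arises; combining this with the mixed-complex duality above gives an isomorphism of complexes
$$\mathrm{CC}^\bullet(A)\;\simeq\;M^\vee[[u]]\;=\;\ol{\mathrm{CH}}_\bullet(C)[[u]]\;=\;\mathrm{CC}^-_\bullet(C),$$
with differential $b+uB$ on both sides. Passing from the cohomological grading on the left to the homological grading on the right reverses degrees, so taking (co)homology yields $\mathrm{HC}^{-\bullet}(A)\simeq\mathrm{HC}^-_\bullet(C)$, as claimed.

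I expect the main obstacle to lie in the last two steps rather than in the formal duality: first, the sign bookkeeping needed to verify that $b_C$ and $B_C$ are \emph{exactly} the transposes $b_A^\vee$ and $B_A^\vee$ under the Koszul rule; and second, the precise matching of coefficient modules, since one must check that negative cyclic \emph{homology} dualises to cyclic \emph{cohomology} (and not to periodic or cyclic homology). This hinges on the specific duality $\k[[u]]\leftrightarrow\k[[u,u^{-1}]]/u\k[[u]]$ and on the degree $-2$ of $u$ surviving the dualisation, which is exactly what produces the degree reversal $\bullet\mapsto-\bullet$.
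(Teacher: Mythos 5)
Your proposal is correct and follows essentially the same route as the paper: the paper's proof simply notes that finite dimensionality gives $\mathrm{Hom}(A^{\otimes n},\k)\simeq C^{\otimes n}$ and concludes that $\mathrm{CC}^{-\bullet}(A)\simeq\mathrm{CC}^{-}_{\bullet}(C)$ as complexes, with the degree reversal coming from passing between homological and cohomological grading. You have merely made explicit the two points the paper leaves implicit, namely that the identification is one of mixed complexes and that the coefficient modules $\k[[u,u^{-1}]]/u\k[[u]]$ and $\k[[u]]$ are linearly dual, which is exactly why negative cyclic homology of $C$ matches cyclic cohomology of $A$.
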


\begin{proof}
This is because
$$\mathrm{Hom}(A^{\otimes n}, \k)\simeq
\mathrm{Hom}(\mathrm{Hom}(C,\k)^{\otimes n},\k)
\simeq C^{\otimes n},$$
and thus as complexes,
\begin{equation*}\mathrm{CC}^{-{\bullet}}(A)\simeq \mathrm{CC}_{\bullet}^{-}(C),\end{equation*}
where the negative sign in the superscript $\mathrm{CC}^{-{\bullet}}(A)$
appears since we have to change the homological degree to the cohomological one.
\end{proof}

\begin{remark}
The definition of the Hochschild and cyclic homologies
can be generalized to DG algebras and DG coalgebras,
where we add in the boundary map $b$
the differential of the corresponding algebra and/or coalgebra respectively,
and the sign follows the Koszul sign convention.
\end{remark}

\section{Basics of Koszul algebras}\label{Sect_Koszul}

In this section, we recall some basics of Kosuzl algebras.
The notion of Koszul algebras is first introduced by Priddy \cite{Priddy};
for a comprehensive discussion of them, the reader may refer to Loday-Vallette \cite{LV}.

\subsection{Koszul complexes and Koszul algebras}

Suppose $V$ is a  finite dimensional    (possibly graded) vector space,
and let $T(V)$ be the free tensor algebra generated by $V$.
Let $R\subset V\otimes V$ be a subspace (graded subspace if $V$¡¡ is¡¡ graded).
We call the pair $(V, R)$ a quadratic datum.
The quadratic  algebra associated to this datum is defined to be  $$A=A(V, R):=T(V)/(R),$$
where $(R)$ be
the two sided ideal generated by $R$ in $T(V)$.  That is,
\begin{equation}\label{decomposition for a Koszul algebra}
A=T(V)/(R)=\k\oplus V\oplus  \cdots\oplus
\frac{V^{\ot n}}{\sum_{i=1}^{n-2} V^{\ot i}\ot R\ot V^{\ot n-2-i}}\oplus \cdots.
\end{equation}

In the \textit{classical} grading for Koszul algebras (see \cite{BGS}),
one usually puts elements of
$$A_n=\frac{V^{\ot n}}{\sum_{i=1}^{n-2} V^{\ot i}\ot R\ot V^{\ot n-2-i}}$$
in degree $n$. In this paper, we shall use a bigrading on $A$
such that  an  element
$$v_1\ot \cdots\ot v_n\in \frac{V^{\ot n}}{\sum_{i=1}^{n-2} V^{\ot i}\ot R\ot V^{\ot n-2-i}}$$
has degree $|v_1|+\cdots +|v_n|$ and weight $n$.
For example, if $V$ is ungraded,  elements of $A$ only live in the
column $\mathrm{degree}=0$, so in this case, the relation between the
classical grading and the bigrading is
\begin{equation}\label{Comparison two gradings for A} A_n=A_{0n}, n\geq 0.\end{equation}
We shall use these two notations in the sequel.
\textit{For the simplicity, we shall assume from now on that $V$ is ungraded.}

The quadratic  coalgebra  $C(V, R)$ associated to the quadratic datum $(V, R)$
is defined to be the subcoalgebra of $T^c(V)$ such that for any subcoalgebra $C$
of $T^c(V)$ such that the composition $C\rightarrowtail T^c(V)\twoheadrightarrow V^{\ot 2}/R$
is zero, there is a unique coalgebra morphism $C\to C(V, R)$ making the following diagram
commutative $$\xymatrix{& C(V, R)\ar[dr] & \\ C \ar[ur] \ar[rr] & & T^c(V)}.$$
Therefore
$$C=C(V, R)=\k\oplus V\oplus R\oplus \cdots \oplus
\big(\bigcap_{i+2+j=n} V^{\ot i}\ot R\ot V^{\ot j}\big)\oplus \cdots.$$
We define the Koszul dual coalgebra $A^\ac$ to be $C(sV, S^2R)$,
so elements of $$\bigcap_{i+2+j=n} (sV)^{\ot i}\ot (s^2R)\ot (sV)^{\ot j}$$ has
weight $n$ and degree equal to $n$. Since $V$ is ungraded (and thus is put in degree zero),
elements of $A^\ac$ live in the diagonal $degree=weight$ and in the first quadrant. In the classical
grading for this coalgebra, one puts $A^\ac_0=\k, A^\ac_1=V$ and for $n\geq 2$,
$$A^\ac_n=\bigcap_{i+2+j=n} (sV)^{\ot i}\ot (s^2R)\ot (sV)^{\ot j},$$
so this is just our weight grading and the relation between the classical grading and the bigrading
\begin{equation}\label{Comparison two gradings for Koszul dual coalgebra}
A^\ac_n=A^\ac_{nn}, n\geq 0.
\end{equation}

Given a quadratic coalgebra $C=C(V, R)$,
we define its Koszul dual algebra $C^\ac$ to be
$$A(s^{-1}V, s^{-2}R)=T(s^{-1}V)/(s^{-2}R).$$
We verify easily that $(A^\ac)^\ac=A$ and $(C^\ac)^\ac=C$.

The Koszul dual algebra of the quadratic algebra $A=A(V, R)$ is the graded dual
of $A^\ac=C(sV, s^2R)$, so denote by $R^\perp$ the subspace of $V^*\ot V^*$
consisting of functions vanishing on $R$ via the natural map $V^*\ot V^*\to (V\ot V)^*$,
then $$A^!=A(s^{-1}V^*, s^{-2}R^\perp).$$
Elements of
$(s^{-1}V^*)^{\ot n}/\sum_{i+2+j=n} (s^{-1}V^*)^{\ot i}\ot (s^{-2}R^\perp)\ot (s^{-1}V^*)^{\ot j}$
has weight $-n$ and degree the original degree minus  $n$.
Since  $V$ is ungraded (and thus is put in degree zero),
elements of $A^!$ live in the diagonal $degree=weight$ and in the third quadrant.
In the usual grading for this algebra, one puts $A^!_0=\k, A^!_1=V$ and
for $n\geq 2$, $A^!_n=(V^*)^{\ot n}/\sum_{i+2+j=n} (V^*)^{\ot i}\ot (R^\perp)\ot (V^*)^{\ot j}$,
so the relation between the classical grading and the bigrading is
\begin{equation}\label{Comparison two gradings for Koszul dual algebra}
A^!_n=A^!_{-n, -n}, n\geq 0\end{equation}

As is easily seen,  the canonical map
$\kappa: A^\ac\twoheadrightarrow sV\simeq V\rightarrowtail A$
is a twisting morphism, so one can form the twisted tensor product
$A^\ac\ot_\kappa A$ or $A\ot_\kappa  A^\ac$.

 \begin{definition}\label{DefKoszul}$A$ is said to be a {\it Koszul algebra}
 if $A\ot_\kappa  A^\ac$ is a projective  resolution of $\k$ or equivalently $\kappa$
 is a Koszul morphism.\end{definition}

We provide a concrete description of this complex.
Choose a basis $\{e_i\}$ of $V$, and let $\{e_i^*\}$ be the dual basis of $V^*$.
Let $\displaystyle\sum_i e_i\otimes s^{-1} e_i^*$ acts on $A\otimes A^{\ac}$
by
$$
d(a\otimes f)=(-1)^{|f|}\sum_i a e_i\otimes f\cdot  s^{-1} e_i^*,
$$
then $d^2$ is automatically $0$.
The complex $(A\otimes A^{\ac},d)$ is called the {\it Koszul complex}
of the quadratic algebra $A$ (it is in fact our $A\ot_\kappa A^\ac$).

One immediately gets that if $A$ is Koszul, then $A^!$ is also Koszul. In fact the Koszul complex of $A^!$ is
just $A^!\ot_{\kappa^\vee}A^\vee$ which is the dual of the Koszul complex of $A$.

Let us consider the bar construction of $A=A(V, R)$.  As we suppose that $V$ is ungraded,
it is put in degree zero. Graphically, the bar construction
$\mathrm{B}(A)=\k\oplus s\ol{A}\oplus (s\ol{A})^{\ot 2}\oplus (s\ol{A})^{\ot 3}\oplus \cdots $
 has the following structure
 $$
 \xymatrixrowsep{0pt}
 \xymatrix{\vdots & &\vdots &\vdots &\vdots & \\
3 & &\frac{V^{\ot 3}}{V\ot R+R\ot V} & V\ot  \frac{V^{\ot 2}}{R}+\frac{V^{\ot 2}}{R} \ot V \ar[l] & V\ot V\ot V \ar[l]&  \\
2 & &\frac{V^{\ot 2}}{R} & V\ot V \ar[l]& &  \\
1 & & V& & & \\
0 & \k & & & & \\
\mbox{weight/degree}  & 0 & 1& 2& 3& \cdots  }$$
In this graph,  we did not draw nonzero terms.
From this diagram, we know that there is a natural injection
$i: A^\ac\rightarrowtail\mathrm{B}(A)$ such that for any $i\geq 0$,
$H_{ii}(\mathrm{B}(A))$ can be identified with $A^\ac_i=A^\ac_{ii}$.

 Now consider the cobar construction of $A^\ac$.   Graphically, the bar construction
 $$\Omega(A^\ac)=
 \k\oplus s^{-1}\ol{A^\ac}\oplus (s^{-1} \ol{A^\ac})^{\ot 2}\oplus (s^{-1} \ol{A^\ac})^{\ot 3}\oplus\cdots
 $$
 has the following structure
$$
 \xymatrixrowsep{0pt}
\xymatrix{\vdots &\vdots &\vdots & \vdots& \vdots& \\
3 & VVV &VR+RV\ar[l]& RV\cap VR\ar[l] & &  \\
2 &VV &R\ar[l] & & &  \\
1 & V& & & & \\
0 & \k & & & & \\
\mbox{weight/degree}  & 0 & 1& 2& 3& \cdots  }$$
In this graph, we abreviate the tensor product $V\ot W$ to $VW$, and did not draw nonzero terms.
From this diagram, we know that there is a natural surjection $q: \Omega(A^\ac)\twoheadrightarrow A$
such that for any $i\geq 0$,
$H_{0i}(\Omega(A^\ac))$ can be identified with $A_i=A_{0i}$.

It follows from the fundamental result about twisting morphisms that $A$ is a Koszul algebra if and only if
$q: \Omega(A^\ac)\twoheadrightarrow A$ is an quasi-isomorphism of
differential bigraded algebras
if and only if $i: A^\ac\rightarrowtail \mathrm{B}(A)$ is an quasi-isomorphism
of differential bigraded coalgebras.

This kind of reciprocity is also reflected in the following proposition and it follows from the adjunction
between bar and cobar construction.


\begin{proposition}\label{CAN}
Suppose that $A$ is a Koszul algebra. Denote by $A^{\ac}$ the Koszul dual coalgebra of $A$.
Then we have a commutative diagram
$$\xymatrixcolsep{4pc}
\xymatrix{
A^{\ac}\ar[r]^{\eta}\ar[rd]_i& \mathrm{B}\Omega (A^{\ac})\ar[d]^{\mathrm{B}(p)}\\
&\mathrm{B}(A)
}
$$
of quasi-isomorphisms of differential graded coalgebras.
\end{proposition}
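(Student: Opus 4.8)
The plan is to read all three arrows off the bar-cobar adjunction, applied to the two twisting morphisms $\iota\colon A^\ac\to\Omega(A^\ac)$ and $\kappa\colon A^\ac\to A$, so that both the commutativity and the three quasi-isomorphism claims reduce to results already recorded above. First I would fix the identifications. By construction $\eta$ is the unit of the adjunction, which is the coalgebra morphism $f_\iota$ associated to the universal twisting morphism $\iota$; the map $p$ is the algebra morphism $g_\kappa\colon\Omega(A^\ac)\to A$ associated to the canonical twisting morphism $\kappa$ (this is the map called $q$ above); and $i=f_\kappa$ is the coalgebra morphism associated to the same $\kappa$. With these identifications the proposition becomes a formal statement about the adjunction.

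For the commutativity $\mathrm{B}(p)\circ\eta=i$, I would use that a coalgebra morphism into $\mathrm{B}(A)$ is determined by the twisting morphism obtained by postcomposing with the universal $\pi\colon\mathrm{B}(A)\to A$, by uniqueness of the factorization of a twisting morphism through $\pi$. Thus it suffices to verify that $\mathrm{B}(p)\circ\eta$ and $i$ induce the same twisting morphism, and by definition $\pi\circ i=\kappa$. Using the naturality of $\pi$ (which gives $\pi\circ\mathrm{B}(p)=p\circ\pi_{\Omega(A^\ac)}$) followed by the factorization identity $\pi_{\Omega(A^\ac)}\circ\eta=\iota$ (this is just $\iota=\pi_{\Omega(A^\ac)}\circ f_\iota$ applied to $\eta=f_\iota$), I obtain
$$
\pi\circ\mathrm{B}(p)\circ\eta=p\circ\pi_{\Omega(A^\ac)}\circ\eta=p\circ\iota=g_\kappa\circ\iota=\kappa,
$$
so $\mathrm{B}(p)\circ\eta$ and $i$ have equal associated twisting morphisms and therefore coincide.

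It then remains to see that each arrow is a quasi-isomorphism, and here I would invoke the earlier results directly. The map $i=f_\kappa$ is a quasi-isomorphism precisely because $A$ is Koszul, that is, $\kappa$ is a Koszul morphism, by the fundamental result about twisting morphisms (this is exactly the equivalence recalled just before the proposition). The unit $\eta=f_\iota$ is a quasi-isomorphism because $\iota$ is always a Koszul morphism: the twisted tensor product $A^\ac\ot_\iota\Omega(A^\ac)$ is a resolution of $\k$, so the same fundamental result applies. Finally, since $A$ is Koszul, $p\colon\Omega(A^\ac)\to A$ is a quasi-isomorphism of augmented connected differential bigraded algebras, and the bar construction carries such quasi-isomorphisms to quasi-isomorphisms, whence $\mathrm{B}(p)$ is a quasi-isomorphism; alternatively, once two of the three arrows are known to be quasi-isomorphisms this follows by two-out-of-three.

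The part demanding the most care is not any computation but the bookkeeping in the commutativity step: matching $\eta$ with $f_\iota$ and $i$ with $f_\kappa$, and checking the naturality of $\pi$ together with the factorization identity $\pi_{\Omega(A^\ac)}\circ\eta=\iota$ at the level of the differential bigraded (co)algebra structures, while keeping track of the Adams-connectedness hypotheses under which the cited functoriality statements for $\mathrm{B}$ and $\Omega$ apply.
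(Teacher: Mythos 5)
Your proof is correct and takes essentially the same approach as the paper, which offers no written argument beyond the remark that the proposition ``follows from the adjunction between bar and cobar construction.'' Your identifications $\eta=f_\iota$, $i=f_\kappa$, $p=g_\kappa$, the uniqueness of the factorization of a twisting morphism through $\pi$, and the appeal to the fundamental theorem on twisting morphisms (plus bar-construction functoriality or two-out-of-three for $\mathrm{B}(p)$) are exactly the intended filling-in of that remark.
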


\begin{remark}Let $A$ be a Koszul algebra.
Since $A$ is locally finite, its graded dual $A^\vee$ is a coalgebra.
We have four universal twisting morphisms
$$\pi: \mathrm{B}(A)\to A,\quad \pi'=\iota^\vee: \mathrm{B}(A^!)\to A^!,
\quad \iota: A^\ac\to \Omega(A^\ac),\quad \iota'=\pi^\vee: A^\vee\to \Omega(A^\vee),$$
two  Koszul morphisms
$$\kappa: A^\ac\to A \quad\mathrm{and}\quad \kappa^\vee: A^\vee\to  A^!, $$
two quasi-isomorphisms of differential bigraded coalgebras
$$i: A^\ac\to \mathrm{B}(A)\quad\mathrm{and}\quad q^\vee: A^\vee\to \mathrm{B}(A^!),$$
and two quasi-isomorphisms of differential bigraded algebras
$$q: \Omega(A^\ac)\to A\quad \mathrm{and}\quad i^\vee:\Omega(A^\vee)\to A^!,$$
which satisfy some compatibility conditions.
Furthermore, there are four algebra-coalgebra dualities
\begin{equation}\label{DualityBetweenBarCobarConstructionForKoszulAlgebra}
\mathrm{B}(A)^\vee\simeq \Omega(A^\vee), \quad \Omega(A^\vee)^\vee\simeq\mathrm{B}(A),
\quad \mathrm{B}(A^!)^\vee\simeq \Omega(A^\ac),\quad \Omega(A^\ac)^\vee\simeq \mathrm{B}(A^!).
\end{equation}
\end{remark}

\begin{corollary}\label{Res}
Suppose that $A$ is a Koszul algebra. Denote by $A^{\ac}$ its Koszul dual coalgebra. Then
\begin{enumerate}
\item[$(1)$]
there is an isomorphism of graded coalgebras
$$A^\ac\simeq \mathrm{Tor}^A_\bullet(\k, \k)$$ and an isomorphism of graded algebras
$$
A^{!}\simeq \mathrm{Ext}_A^{-{\bullet}}(\k,\k).
$$
\item[$(2)$]
the complex
$$
\cdots\longrightarrow A\otimes A^{\ac}_m\otimes A\stackrel{b}{\longrightarrow} A\otimes A^{\ac}_{m-1}\otimes A
\stackrel{b}\longrightarrow\cdots\stackrel{b}\longrightarrow
A\otimes A^{\ac}_0\otimes A\simeq A\otimes A\stackrel{\mathrm{mult}}{\longrightarrow} A,$$
where
\begin{equation}\label{DifferentialInKoszulBimoduleResolutionOfA}
b(a\otimes c\otimes a')=
\sum_i \Big(e_ia\otimes  e_i^* c\otimes a'+(-1)^{m}a\otimes c e_i^* \otimes  a' e_i\Big)
\end{equation}
for $a\otimes c\otimes a' \in A\otimes A^{\ac}_m\otimes A$,
gives a resolution of $A$ as an $A$-bimodule.
\end{enumerate}
\end{corollary}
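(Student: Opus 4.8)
The plan is to derive both parts from the Koszul property of $A$ through the twisting-morphism machinery of \S\ref{Section:BarCobar}.

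For the coalgebra isomorphism in $(1)$, I would use that the left twisted tensor product $A\ot_\pi\mathrm{B}(A)$ is a free resolution of $\k$ as a left $A$-module. Applying $\k\ot_A-$ collapses it to the reduced bar complex, so $\mathrm{Tor}^A_\bullet(\k,\k)\simeq \mathrm{H}_\bullet(\mathrm{B}(A))$ as graded coalgebras, the comultiplication being the deconcatenation coproduct. Because $V$ is ungraded, $A$ carries no internal differential and $A^\ac$ has zero differential, whence $\mathrm{H}_\bullet(A^\ac)=A^\ac$. Since $A$ is Koszul, the canonical coalgebra map $i\colon A^\ac\to\mathrm{B}(A)$ is a quasi-isomorphism by Definition \ref{DefKoszul} and the fundamental result about twisting morphisms; it therefore induces the desired isomorphism $A^\ac\simeq\mathrm{Tor}^A_\bullet(\k,\k)$ of graded coalgebras. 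The algebra isomorphism for $A^!$ follows by dualizing: as $A$ is locally finite, $\mathrm{Ext}^\bullet_A(\k,\k)\simeq(\mathrm{Tor}^A_\bullet(\k,\k))^\vee$, the Yoneda product being dual to the coproduct on $\mathrm{Tor}$, and since $A^!=(A^\ac)^\vee$ by construction this gives $A^!\simeq\mathrm{Ext}^{-\bullet}_A(\k,\k)$ as graded algebras, the sign recording only the passage from homological to cohomological degree.

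For $(2)$, the point is that the displayed complex is, up to the standard identification $A^\ac_0=\k$, exactly the two-sided twisted tensor product $A\ot_\kappa A^\ac\ot_\kappa A$ attached to the canonical Koszul morphism $\kappa\colon A^\ac\to A$. Since $A$ is Koszul, $\kappa$ is a Koszul morphism, so by the general statement preceding \eqref{BimoduleResolutionComputingtwisteHom} the augmentation $A\ot_\kappa A^\ac\ot_\kappa A\twoheadrightarrow A\ot\k\ot A\twoheadrightarrow A$ is a quasi-isomorphism of $A$-bimodules, and under $A^\ac_0=\k$ it is precisely the multiplication map at the right-hand end of the complex. It then remains to verify that the abstract differential $d_\kappa=d_{A\ot A^\ac\ot A}+\mathrm{Id}_A\ot d^r_\kappa-d^l_\kappa\ot\mathrm{Id}_A$ restricts on $A\ot A^\ac_m\ot A$ to the explicit formula \eqref{DifferentialInKoszulBimoduleResolutionOfA}. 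Here $d_{A\ot A^\ac\ot A}=0$ as noted above, so only the twisted terms survive; unwinding $d^l_\kappa$ and $d^r_\kappa$ through the coproduct of the quadratic coalgebra $A^\ac$ splits off the leftmost, respectively rightmost, tensor factor of $c\in A^\ac_m\subset(sV)^{\ot m}$, and composing with $\kappa$ (the desuspension $sV\simeq V\rightarrowtail A$) reproduces the contraction operators $c\mapsto e_i^*c$ and $c\mapsto c\,e_i^*$ together with multiplication by the dual generators $e_i\in V\subset A$, summed over the bases $\{e_i\}$ and $\{e_i^*\}$.

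I expect the main obstacle to be purely this bookkeeping: one must track the Koszul signs produced by the suspensions and by the convention $d_\kappa=\cdots-d^l_\kappa\ot\mathrm{Id}_A$, and reconcile the left/right placement of the split-off generators with the bimodule structure (equivalently, fix the left-right reflection relating $A\ot_\kappa A^\ac\ot_\kappa A$ to the complex as written), so as to recover the sign $(-1)^m$ on the second term of \eqref{DifferentialInKoszulBimoduleResolutionOfA}. All homological content is already contained in the Koszulness of $\kappa$ and the general resolution statement, so once the differential is matched no further difficulty arises.
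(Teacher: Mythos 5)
Your argument is correct and draws on the same twisting-morphism toolbox, but it is organized around different lemmas than the paper's proof. The paper effectively establishes part (2) first: it observes that $\kappa=\pi\circ i$, applies the Comparison Lemma for twisted tensor products to the compatible pair $(\kappa,\pi)$ to get a quasi-isomorphism $A\ot_\kappa A^{\ac}\ot_\kappa A\to A\ot_\pi\mathrm{B}(A)\ot_\pi A$, and then deduces part (1) by applying $\k\ot_A(-)\ot_A\k$ to this bimodule resolution, which collapses to $A^{\ac}$ with zero differential. You instead prove (1) directly from the one-sided resolution $A\ot_\pi\mathrm{B}(A)$ of $\k$ together with the quasi-isomorphism of differential graded coalgebras $i\colon A^{\ac}\to\mathrm{B}(A)$, and you prove (2) by invoking the characterization of Koszul morphisms through the two-sided twisted tensor product (namely that $A\ot_\alpha C\ot_\alpha A\to A$ is a quasi-isomorphism if and only if $\alpha$ is Koszul) rather than the Comparison Lemma. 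Both routes are available from \S\ref{Section:BarCobar}; yours makes the coalgebra structure on $\mathrm{Tor}^A_\bullet(\k,\k)$ more transparent (deconcatenation on the bar complex, with $i$ a coalgebra map), while the paper's has the advantage that the bimodule resolution needed for (2) is produced along the way instead of by a separate appeal to the general theory. The sign and left/right bookkeeping you defer at the end of (2) is likewise left implicit in the paper, which simply identifies the displayed complex with $A\ot_\kappa A^{\ac}\ot_\kappa A$, so no genuine gap results.
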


\begin{proof} Since $\pi: \mathrm{B}(A)\twoheadrightarrow s\ol{A}\simeq \ol{A}\rightarrowtail A$ is a
Koszul morphism,  $A\ot_\pi \mathrm{B}(A)\ot_\pi A$ is a projetive bimodule resolution of $A$ as bimodules.
For the Koszul algebra, the map $i: A^\ac\rightarrowtail\mathrm{B}(A)$ is a quasi-isomorphism of
differential bigraded coalgebras  and $\kappa: A^\ac\twoheadrightarrow sV\simeq V\rightarrowtail A$
is a Koszul morphism. It is easy to see that  $\kappa=\pi\circ i$, that is $\kappa$ and $\pi$ are compatible.
The comparison lemma shows that $A\ot_\kappa A^\ac\ot_\kappa A\to A\ot_\pi \mathrm{B}(A)\ot_\pi A$ is a
quasi-isomorphism. As a consequence, the complex
$\k\ot_A (A\ot_\kappa A^\ac_\bullet\ot_\kappa A)\ot_A \k$ computes $\mathrm{Tor}_\bullet^A(\k, \k)$,
and there is an isomorphism of complexes
$A^\ac_\bullet \simeq \k\ot_A (A\ot_\kappa A^\ac_\bullet\ot_\kappa A)\ot_A \k$.
Via the isomorphism, we endow $\mathrm{Tor}_\bullet^A(\k, \k)$ with a structure of graded coalgebra.
Taking homology groups, we see an isomorphism of graded coalgebras
$A^\ac_\bullet\simeq \mathrm{Tor}_\bullet^A(\k, \k)$.
Recall that we have used the classical grading for the complexes.

 For the second statement of (1), one can take  the graded duals of both sides
 of the first isomorphisms and notice that the graded dual of
 $\mathrm{Tor}_\bullet^A(\k, \k)$ is just $\mathrm{Ext}^{-\bullet}_A(\k, \k)$.
 Alternatively, one checks that the complex
 $\mathrm{Hom}_A(\k\ot_A (A\ot_\pi \mathrm{B}(A)\ot_\pi A), \k)$ is
 isomorphic to the graded dual of $\mathrm{B}(A)$.
Note that the minus sign in the second isomorphism comes from our grading
 convention that $A^!$ is negatively bigrading.

For (2),  this complex is just $A\ot_\kappa A^\ac\ot_\kappa A$.
For another nice proof, see Kr\"ahmer \cite[Proposition 19]{Krahmer}.
\end{proof}

\section{Hochschild (co)homology of Koszul algebras}\label{Sect_HK}

As proved by Priddy \cite{Priddy}, Koszul duality greatly simplifies
the computations of Hochschild homology and cohomology groups.

\subsection{Hochschild homology via the Koszul complex}

Denote
$$
K'(A)=\{\cdots\longrightarrow A\otimes A^{\ac}_m\otimes
A\stackrel{b}{\longrightarrow} A\otimes A^{\ac}_{m-1}\otimes A
\stackrel{b}\longrightarrow\cdots\stackrel{b}\longrightarrow
A\otimes A^{\ac}_0\otimes A\}$$
be the projective bimodue  resolution of $A$ as in Corollary \ref{Res},
that is, $K'(A)=A\ot_\kappa A^\ac_\bullet\ot_\kappa A$. Let
$K(A):=A\otimes_{A\otimes A^{\mathrm{op}}} K'(A)$.
More concretely (recall that we assume that $V$ is ungraded),
choose a basis $\{e_i\}$ of $V$, and let $\{e_i^*\}$ be the dual basis of $V^*$. Then
$$
K(A)\simeq (A\otimes A^{\ac}_\bullet, b)
$$
with differential
\begin{equation}\label{DifferentialInKoszulComplex}
b(a\otimes c)=\sum_{i}
\Big(a e_i a \otimes c\cdot s^{-1}e_i^*  +(-1)^{m}  e_i a\otimes  s^{-1} e_i^* \cdot c\Big)
\end{equation}
for $a\otimes c\in A\otimes A^{\ac}_{m}$.
 Therefore, we have
\begin{equation}\label{HHo_Kos}
\mathrm{HH}_{\bullet} (A)\simeq\mathrm H_{\bullet}(K(A), b).
\end{equation}
Observe that the homological grading coincides with  the classical grading. Since $A$ is bigraded,
the normalized Hochschild complex of $A$  and  the Hochschild homology are also bigraded. Define
$$\mathrm{HH}_{ij}(A)=\mathrm{H}_{ij}(\ol{\mathrm{CH}}_{\bullet\bullet}(A)) \simeq \mathrm{H}_{ij}(K(A), b),$$
then the  relation between the homological grading and the bigrading is
\begin{equation}\label{ComparisonTwoGradingForHochschildHomologyOfA}
\mathrm{HH}_n(A)=\bigoplus_{i} \mathrm{HH}_{n, n+i}(A), n\geq 0.\end{equation}

Similarly, define
$$\mathrm{HH}_{ij}(A^\ac)=\mathrm{H}_{ij}(\ol{\mathrm{CH}}_{\bullet\bullet}(A^\ac))$$
and we obtain
\begin{equation}\label{ComparisonTwoGradingForHochschildHomologyOfA^ac}
\mathrm{HH}_n(A^\ac)=\bigoplus_{i} \mathrm{HH}_{i, i+n}(A), n\geq 0.\end{equation}

\begin{theorem}\label{can_iso}
Let $A$ be a Koszul algebra, and denote by $A^{\ac}$ its Koszul dual coalgebra.
Then we have bigraded isomorphisms
$$\mathrm{HH}_{{\bullet}{\bullet}}(A)\simeq\mathrm{H}_{{\bullet}{\bullet}}(K(A),b)
\simeq\mathrm{HH}_{{\bullet}{\bullet}}(A^{\ac}),$$
which respects the Connes cyclic operator on both sides.
\end{theorem}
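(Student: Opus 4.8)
The plan is to realise the two isomorphisms as quasi-isomorphisms induced by the comparison morphisms $i\colon A^{\ac}\to \mathrm{B}(A)$ and $q\colon \Omega(A^{\ac})\to A$ attached to a Koszul algebra, and to treat the compatibility with the Connes operator afterwards. The left-hand isomorphism $\mathrm{HH}_{\bullet\bullet}(A)\simeq \mathrm{H}_{\bullet\bullet}(K(A),b)$ is already essentially recorded in (\ref{HHo_Kos}): by Corollary \ref{Res} the complex $K'(A)=A\ot_\kappa A^{\ac}_\bullet\ot_\kappa A$ is a projective bimodule resolution of $A$, and since $\kappa=\pi\circ i$, the comparison lemma for twisted tensor products yields a bimodule quasi-isomorphism $A\ot_\kappa A^{\ac}\ot_\kappa A\to A\ot_\pi \mathrm{B}(A)\ot_\pi A$ lying over $\mathrm{Id}_A$. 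Applying $A\ot_{A^e}(-)$ produces a bigraded quasi-isomorphism $K(A)\to \overline{\mathrm{CH}}_\bullet(A)$, which gives the first isomorphism.

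For the right-hand isomorphism I would run the dual argument using $q$. Since $q\colon \Omega(A^{\ac})\to A$ is a quasi-isomorphism of differential bigraded algebras with $\kappa=q\circ\iota$, I would first verify that the map induced by $q$, namely $\Omega(A^{\ac})\ot A^{\ac}=\overline{\mathrm{CH}}_\bullet(A^{\ac})\to A\ot A^{\ac}=K(A)$, is a chain map intertwining the coalgebra Hochschild differential (\ref{Hochbcopx_coalg}) with the Koszul differential (\ref{DifferentialInKoszulComplex}): the terms of $b$ splitting the cobar factors are annihilated by $q$, since $q$ is a chain map onto $A$ with zero differential, while the wrap-around terms involving the last factor reproduce the two summands of (\ref{DifferentialInKoszulComplex}). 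That this chain map is a quasi-isomorphism I would deduce by filtering both complexes by the weight grading and comparing the associated spectral sequences, whose comparison on $E^1$ is governed by $q$. Here the bigrading is essential rather than cosmetic: as (\ref{ComparisonTwoGradingForHochschildHomologyOfA}) and its counterpart for $A^{\ac}$ record, the homological degree occupies different positions relative to the internal grading on the two sides, so the identification can survive only at the bigraded level.

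The step I expect to be the main obstacle is the compatibility with the Connes operator, precisely because $K(A)$ carries no evident cyclic operator and $B$ is given by visibly different formulas at the two ends: cyclic rotation with insertion of the unit on $\overline{\mathrm{CH}}_\bullet(A)$, and cyclic rotation weighted by the counit on $\overline{\mathrm{CH}}_\bullet(A^{\ac})$. As the two comparison maps above are not morphisms of mixed complexes in any naive sense, I would not attempt to push $B$ through $K(A)$ directly. Instead I plan to link the mixed complexes $(\overline{\mathrm{CH}}_\bullet(A),b,B)$ and $(\overline{\mathrm{CH}}_\bullet(A^{\ac}),b,B)$ by a zig-zag of quasi-isomorphisms of mixed complexes assembled from the bar/cobar comparison of Proposition \ref{CAN} (the unit $\eta\colon A^{\ac}\to \mathrm{B}\Omega(A^{\ac})$ together with the induced map on bar constructions), and then check, arrow by arrow, that each step commutes with $B$ up to the prescribed signs. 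Concretely, given a bigraded homology class I would represent it in $K(A)$, transport it to $\overline{\mathrm{CH}}_\bullet(A^{\ac})$, and compare $B_{A^{\ac}}$ applied afterwards with $B_A$ applied before transport; the delicate point is to match the unit-insertion term against the counit term, and once more the bigrading limits which terms can contribute. Equivalently, the entire $B$-compatibility can be encoded by showing that this comparison data upgrades to a quasi-isomorphism of the negative cyclic complexes $(\,-\,[[u]],\,b+uB)$, and I would regard establishing that upgrade as the genuine core of the argument.
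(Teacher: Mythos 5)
Your handling of the two $b$-level identifications coincides with the paper's: the map $\mathrm{Id}_A\otimes i\colon K(A)\to A\otimes\mathrm{B}(A)$ obtained from the comparison lemma applied to $\kappa=\pi\circ i$, and the map $q\otimes\mathrm{Id}\colon \Omega(A^{\ac})\otimes A^{\ac}\to A\otimes A^{\ac}$ analysed by the weight filtration, are exactly the maps $\phi_1$ and $\phi_2$ of Lemma \ref{Main_Lemma2}, and your verification that the cobar-internal terms of the coalgebra differential die under $q$ while the wrap-around terms reproduce (\ref{DifferentialInKoszulComplex}) is correct. The problem lies in the Connes-operator step, which you yourself flag as the core of the argument but for which your concrete plan does not work.

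The zig-zag you propose passes through $\overline{\mathrm{CH}}_\bullet(\Omega(A^{\ac}))=\Omega(A^{\ac})\otimes\mathrm{B}\Omega(A^{\ac})$ via the arrow $q_2=\mathrm{Id}\otimes\eta$ induced by the unit $\eta\colon A^{\ac}\to\mathrm{B}\Omega(A^{\ac})$ and the arrow $\overline{\mathrm{CH}}_\bullet(p)$ induced by $p\colon\Omega(A^{\ac})\to A$, and you intend to check that each arrow strictly commutes with $B$ up to signs. The second arrow does, by functoriality of the Hochschild mixed complex under DG algebra morphisms; the first does not, and no sign convention repairs this. Indeed, on an element $(a_1,\dots,a_n,1)$ with $n\geq 2$, the coalgebra operator $B$ of Definition \ref{Hoch_coalg} produces the $n$ cyclic rotations, each carrying a single letter $a_i$ into the coefficient slot, so $q_2\circ B$ yields terms whose $\mathrm{B}\Omega(A^{\ac})$-component is $\eta(a_i)=[\langle a_i\rangle]+\cdots$, a sum of words beginning with a length-one bar slot; whereas $B\circ q_2(a_1,\dots,a_n,1)=\pm\,1\otimes[\langle a_1|\cdots|a_n\rangle]$ consists of a single bar slot containing the whole length-$n$ word. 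These cannot agree. The paper's resolution (Lemma \ref{Main_Lemma1}) is to discard $q_2$ at the mixed-complex level and use instead the explicit projection $p_2\colon\Omega(A^{\ac})\otimes\mathrm{B}\Omega(A^{\ac})\to\Omega(A^{\ac})\otimes A^{\ac}$ of Vigu\'e-Poirrier and Jones--McCleary, going in the opposite direction: the essential external input is that $p_2$ \emph{is} a morphism of mixed complexes, and since $p_2\circ q_2=\mathrm{id}$ it is a $b$-quasi-isomorphism, hence a quasi-isomorphism of mixed complexes by the standard fact that a mixed-complex morphism inducing an isomorphism on $b$-homology does so on cyclic theories. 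Your proposal is missing precisely this ingredient (or a substitute such as an $S^1$-equivariantly coherent homotopy making $q_2$ commute with $B$, which is strictly more than the termwise sign check you describe).
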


\begin{lemma}\label{Main_Lemma2}
Let $A$ be a Koszul algebra, and $A^{\ac}$ be its Koszul dual coalgebra. Then
we have commutative diagram of quasi-isomorphisms of $b$-complexes
$$
\xymatrix{
&\ol{\mathrm{CH}}_\bullet(\Omega(A^\ac))=\Omega (A^{\ac})\otimes\mathrm B\Omega (A^{\ac})\ar[ld]_{p_1}&\\
\ol{\mathrm{CH}}_\bullet(A)=A\otimes \mathrm B(A)&&\ol{\mathrm{CH}}_\bullet(A^\ac)
=\Omega (A^{\ac})\otimes A^{\ac}.\ar[lu]_{q_2}\ar[ld]^{\phi_2}\\
&K(A)=A\otimes A^{\ac}\ar[lu]^{\phi_1}&
}
$$
\end{lemma}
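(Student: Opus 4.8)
The plan is to realise all four arrows as tensor products of the canonical comparison morphisms attached to the Koszul algebra $A$, to read off the commutativity of the square directly from Proposition \ref{CAN}, and to prove that each arrow is a quasi-isomorphism by recognising it either as a comparison of free resolutions or as the map induced on Hochschild homology by the algebra quasi-isomorphism $q$.

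I would begin by recalling the three relevant quasi-isomorphisms attached to $A$: the quasi-isomorphism of differential bigraded algebras $q\colon\Omega(A^\ac)\to A$, the coalgebra unit $\eta\colon A^\ac\to\mathrm{B}\Omega(A^\ac)$, and the inclusion $i\colon A^\ac\to\mathrm{B}(A)$. Using $\ol{\mathrm{CH}}_\bullet(B)=B\ot\mathrm{B}(B)$ for an algebra $B$ and $\ol{\mathrm{CH}}_\bullet(C)=\Omega(C)\ot C$ for a coalgebra $C$, I set
$$
p_1:=q\ot\mathrm{B}(q),\qquad q_2:=\mathrm{Id}_{\Omega(A^\ac)}\ot\eta,\qquad \phi_1:=\mathrm{Id}_A\ot i,\qquad \phi_2:=q\ot\mathrm{Id}_{A^\ac}.
$$
Here $p_1=\ol{\mathrm{CH}}_\bullet(q)$ is the functorial map on normalised Hochschild complexes, while $q_2$ and $\phi_2$ exploit that $\Omega(A^\ac)$ occurs both as the cobar factor of $\ol{\mathrm{CH}}_\bullet(A^\ac)$ and as the coefficient algebra of $\ol{\mathrm{CH}}_\bullet(\Omega(A^\ac))$ and of $K(A)$. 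That these are morphisms of $b$-complexes I would verify directly from the formulas \eqref{Hochbcopx_coalg} and \eqref{DifferentialInKoszulComplex}, using that $q$ is an algebra map, $\eta$ and $i$ are coalgebra maps, and $\kappa=\pi\circ i=q\circ\iota$; this is a bookkeeping computation with the Koszul sign rule.

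Granting the chain-map property, commutativity is immediate: on $\omega\ot c\in\Omega(A^\ac)\ot A^\ac$ both composites equal
$$
p_1q_2(\omega\ot c)=q(\omega)\ot\mathrm{B}(q)\eta(c)=q(\omega)\ot i(c)=\phi_1\phi_2(\omega\ot c),
$$
the middle equality being exactly $\mathrm{B}(q)\circ\eta=i$ from Proposition \ref{CAN}. Thus $p_1q_2=\phi_1\phi_2=q\ot i$.

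It remains to prove that each arrow is a quasi-isomorphism, which is the heart of the matter and splits symmetrically. The arrows $\phi_1$ and $q_2$ are comparisons of two free bimodule resolutions of one and the same algebra. Indeed $\phi_1$ arises from the map of bimodule resolutions $A\ot_\kappa A^\ac\ot_\kappa A\to A\ot_\pi\mathrm{B}(A)\ot_\pi A$ of $A$ induced by the compatibility $\kappa=\pi\circ i$, which is a quasi-isomorphism by the Comparison Lemma and remains so after applying $A\ot_{A^e}(-)$; and, after identifying $\ol{\mathrm{CH}}_\bullet(A^\ac)=\Omega(A^\ac)\ot A^\ac$ with the twisted tensor product $\Omega(A^\ac)\ot_\iota A^\ac$ obtained by tensoring $\Omega(A^\ac)\ot_\iota A^\ac\ot_\iota\Omega(A^\ac)$ down over $\Omega(A^\ac)^e$ (legitimate since $\iota$ is Koszul), the arrow $q_2$ becomes the comparison with $\ol{\mathrm{CH}}_\bullet(\Omega(A^\ac))=\Omega(A^\ac)\ot\mathrm{B}\Omega(A^\ac)$ induced by the compatibility $\iota=\pi_{\Omega(A^\ac)}\circ\eta$, again a quasi-isomorphism by the Comparison Lemma. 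The remaining arrows $p_1$ and $\phi_2$ are the maps induced on the respective (small) Hochschild complexes by the algebra quasi-isomorphism $q$ (for $\phi_2$ through $\kappa=q\circ\iota$), and are quasi-isomorphisms by the invariance of Hochschild homology, which one sees via the weight-filtration spectral sequence together with the fact that $\mathrm{B}(q)$ is a quasi-isomorphism. I expect the principal obstacle to be establishing the identification $\ol{\mathrm{CH}}_\bullet(A^\ac)\simeq\Omega(A^\ac)\ot_\iota A^\ac$ of complexes, that is, matching the combinatorial cyclic differential \eqref{Hochbcopx_coalg} of the coalgebra $A^\ac$ with the conceptual $\iota$-twisted differential; once this sign-sensitive bookkeeping is in place, the Comparison Lemma and the invariance of Hochschild homology close the argument.
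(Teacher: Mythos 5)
Your proposal is correct and follows essentially the same route as the paper: the same four maps ($p_1=q\ot\mathrm{B}(q)$, $q_2=\mathrm{Id}\ot\eta$, $\phi_1=\mathrm{Id}_A\ot i$, $\phi_2=q\ot\mathrm{Id}$, with $q$ the paper's $p:\Omega(A^{\ac})\to A$), commutativity read off from Proposition~\ref{CAN}, $\phi_1$ via comparison of bimodule resolutions, $p_1$ via the fact that $\ol{\mathrm{CH}}_\bullet(-)$ preserves quasi-isomorphisms, and $\phi_2$ via the filtration by the weight of the $A^{\ac}$ coefficient. The only divergence is $q_2$, which the paper also handles by a direct filtration/spectral-sequence comparison, whereas you re-realise $\ol{\mathrm{CH}}_\bullet(A^{\ac})$ as $\Omega(A^{\ac})\ot_{\Omega(A^{\ac})^e}\bigl(\Omega(A^{\ac})\ot_\iota A^{\ac}\ot_\iota\Omega(A^{\ac})\bigr)$ and invoke the Comparison Lemma through $\iota=\pi\circ\eta$; both arguments are valid, yours making the parallel with $\phi_1$ explicit at the cost of the sign-sensitive identification of the cyclic twisted differential with (\ref{Hochbcopx_coalg}) that you correctly flag.
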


\begin{proof}
We have explicit formulas for $p_1,q_2,\phi_1$ and $\phi_2$:
\begin{eqnarray*}
p_1=\overline{\mathrm{CH}}_\bullet(p)=p\ot \mathrm{B}(p); && q_2=id\otimes \eta;\\
\phi_1=id\otimes i;&&\phi_2= p\otimes id,
\end{eqnarray*}
where  $\eta, i$ are given in Proposition \ref{CAN}.
One verifies easily that these are chain maps and
the  commutativity of the diagram follows from  Proposition~\ref{CAN}.

We already know that  $\phi_1$ is a quasi-isomorphism.
Since the functor $\overline{\mathrm{CH}}_{\bullet}(-)$ preserves quasi-isomorphisms, $p_1$ is a quasi-isomorphism (\cite[5.3.5 Theorem]{Loday}).

The proof of other two    quasi-isomorphisms follows from a standard spectral sequence argument.
To show
the quasi-isomorphism $$\phi_2: \Omega(A^\ac)\otimes A^{\ac}\to A\otimes  A^{\ac},$$
filter both complexes by
$$ F_r(\Omega(A^\ac)\otimes A^{\ac})
=\bigoplus_{s\leq r} \Omega(A^{\ac})\otimes A^{\ac}_{s},
\quad
 F_r(A\otimes A^{\ac})
=\bigoplus_{s\leq r} A\otimes  A^{\ac}_{s},
$$
then both of the boundary maps respect the filtration,
and the comparison theorem for spectral sequences (\cite[Lemma 2.5.1]{LV})
guarantees the quasi-isomorphism, induced by the quasi-isomorphism
$p: \Omega(A^{\ac})\to A^{\ac}$.
For $q_2: \Omega(A^{\ac})\otimes A^{\ac}\to \Omega(A^\ac)\otimes A^{\ac} $,
just choose the filtrations
\begin{equation*}
F_r(\Omega(A^\ac)\otimes A^{\ac})
=\bigoplus_{s\leq r} \Omega(A^{\ac})\otimes A^{\ac}_{s},
\quad F_r(\Omega (A^{\ac})\otimes\mathrm B\Omega (A^{\ac}))=\bigoplus_{s\leq r} \Omega (A^{\ac})\otimes\mathrm B\Omega (A^{\ac})_{{\bullet}, s}.
\qedhere
\end{equation*}

%
%
%
%
\end{proof}

Now let us recall a result of Loday-Quillen
(see Loday-Quillen \cite[\S5]{LQ} and Loday \cite[\S3.1]{Loday})
that for a free algebra $T(V)$ generated by $V$,
its Hochschild and cyclic homology can be computed by the {\it small complex}
$$
T(V)\otimes (\k\oplus s V).
$$
The differential of this small complex is given as follows:
If we view $T(V)$ as the cobar construction of the coalgebra $\k\oplus s  V$
with trivial reduced coproduct, then the differential is exactly the Hochschild boundary
map for coalgebras (see (\ref{Hochbcopx_coalg})). This result is generalized to the case of
the cobar construction $\Omega(C)$
of any coaugmentted DG coalgebra $C$ by Vigu\'e-Poirrier \cite{V} and Jones-McCleary \cite{JM},
that is, the Hochschild and cyclic homology of $\Omega(C)$ can also be
computed via this small complex, {\it i.e.}
$$(\overline{\mathrm{CH}}_{\bullet}(\Omega(C)),b,B)\simeq(\ol{\mathrm{CH}}_\bullet (C), b, B).$$
It is summarized into the following lemma.

\begin{lemma}\label{Main_Lemma1}
Let $A$ be a Koszul algebra, and denote by $A^{\ac}$ its Koszul dual coalgebra.
We have quasi-isomorphisms
of mixed complexes
$$
\xymatrix{
&\overline{\mathrm{CH}}_{\bullet}(\Omega(A^\ac))=\Omega (A^{\ac})\otimes
\mathrm B\Omega (A^{\ac})\ar[ld]_{p_1}\ar[rd]^{p_2}&\\
\overline{\mathrm{CH}}_{\bullet}(A)=A\otimes \mathrm B(A)&&
\overline{\mathrm{CH}}_{\bullet}(A^\ac)=\Omega (A^{\ac})\otimes A^{\ac}.
}
$$
where $p_2$ is a homotopy inverse of $q_2$ in Lemma \ref{Main_Lemma2}.
\end{lemma}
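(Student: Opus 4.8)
The plan is to promote the $b$-complex quasi-isomorphisms of Lemma~\ref{Main_Lemma2} to quasi-isomorphisms of mixed complexes; the only new point to establish over that lemma is compatibility with the Connes operator $B$, since everything involving $b$ alone is already in place.

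For $p_1$ I would argue by functoriality. By Lemma~\ref{Main_Lemma2} the map $p_1=\overline{\mathrm{CH}}_\bullet(p)$ is induced by the quasi-isomorphism of differential graded algebras $p:\Omega(A^{\ac})\to A$. Both the Hochschild differential $b$ and the Connes operator $B$ are defined naturally from the multiplication and from cyclic permutation of the tensor factors, so the assignment $A\mapsto(\overline{\mathrm{CH}}_\bullet(A),b,B)$ is a functor from augmented differential graded algebras to mixed complexes; in particular $p_1$ commutes with $B$ as well as with $b$. Since $p$ is a quasi-isomorphism, $p_1$ induces an isomorphism on Hochschild homology by Loday~\cite[5.3.5 Theorem]{Loday}, and hence $p_1$ is a quasi-isomorphism of mixed complexes.

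For $p_2$ I would appeal directly to the small-complex theorem of Vigu\'e-Poirrier~\cite{V} and Jones-McCleary~\cite{JM} recalled just above, applied to the coaugmented differential graded coalgebra $C=A^{\ac}$. That theorem yields the mixed-complex quasi-isomorphism $(\overline{\mathrm{CH}}_\bullet(\Omega(A^{\ac})),b,B)\simeq(\overline{\mathrm{CH}}_\bullet(A^{\ac}),b,B)$, realized as a deformation retraction of $\Omega(A^{\ac})\otimes\mathrm{B}\Omega(A^{\ac})$ onto the small complex $\Omega(A^{\ac})\otimes A^{\ac}$. The inclusion of the small complex is exactly $q_2=\mathrm{id}\otimes\eta$ from Lemma~\ref{Main_Lemma2}, and $p_2$ is the accompanying retraction; since the inclusion, the retraction, and the contracting homotopy are all produced as maps compatible with the cyclic structure, $p_2$ is a quasi-isomorphism of mixed complexes and a genuine homotopy inverse of $q_2$ in that category. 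Commutativity of the diagram with $p_1$ then follows from the corresponding commutativity already verified at the $b$-complex level in Lemma~\ref{Main_Lemma2}.

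The main obstacle is precisely this compatibility of $p_2$ with $B$, which is the content lying beyond Lemma~\ref{Main_Lemma2}: that lemma produces only $b$-complex maps, whereas here one needs the retraction data --- the maps $q_2$, $p_2$ and the contracting homotopy $h$ collapsing the bar factor $\mathrm{B}\Omega(A^{\ac})$ onto $A^{\ac}$ --- to interact correctly with the cyclic operator, so that $h$ anticommutes with $B$ up to the prescribed signs and $B$ descends to the small complex as its own Connes operator. This is exactly what \cite{V} and \cite{JM} supply; the residual task is bookkeeping, namely matching their sign and normalization conventions against those we fixed for $B$ on the coalgebra Hochschild complex in Definition~\ref{Hoch_coalg}, everything being governed throughout by the Koszul sign rule.
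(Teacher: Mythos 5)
Your proposal follows essentially the same route as the paper: $p_1$ is handled by functoriality of the Hochschild mixed complex applied to the quasi-isomorphism $p:\Omega(A^{\ac})\to A$, and $p_2$ is taken from the Vigu\'e-Poirrier/Jones-McCleary small-complex construction, with the retraction identity $p_2\circ q_2=\mathrm{id}$ supplying the quasi-isomorphism. The one (harmless) difference is that the paper does not assert that the contracting homotopy itself respects $B$; it only uses that $p_2$ is a morphism of mixed complexes and a $b$-quasi-isomorphism, and then invokes the standard fact (Loday, Proposition 2.5.15) that a morphism of mixed complexes which is a quasi-isomorphism for $b$ is automatically a quasi-isomorphism of mixed complexes --- a weaker input than the fully cyclic-compatible retraction you posit, and one that suffices.
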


\begin{proof}
(1) Since $p: \Omega(A^{\ac})\to A$ is a quasi-isomorphism of DG algebras,
by applying the Hochschild chain complex functor, we obtain
$$p_1=\overline{\mathrm{CH}}_\bullet(p):\overline{\mathrm{CH}}_{\bullet}(\Omega(A^{\ac}))\to
\overline{\mathrm{CH}}_{\bullet}(A)$$
is a quasi-isomorphism of mixed complexes.

(2) Existence of $p_2$:
The formula for $p_2$ is given as follows.
Write elements in the bar construction in the form $[u_1|u_2|\cdots|u_n]$
and elements in the cobar construction in the form $\langle a_1|a_2|\cdots |a_n\rangle $.
Define $p_2:\overline{\mathrm{CH}}_{\bullet} (\Omega(A^{\ac}))\to
\overline{\mathrm{CH}}_{\bullet} (A^{\ac})$ by
\begin{eqnarray*}
\Omega(A^{\ac})\otimes \mathrm B\Omega(A^{\ac})&\longrightarrow&\Omega(A^{\ac})\otimes A^{\ac}\\
\langle a_1|a_2|\cdots |a_n\rangle \otimes 1&\longmapsto&\langle a_1|a_2|\cdots |a_n\rangle \otimes 1\\
\langle a_1|a_2|\cdots |a_n\rangle\otimes[\langle u_1|u_2|\cdots |u_m\rangle]&\longmapsto&\sum_i (-1)^{\mu_i}
\langle u_{i+1}|\cdots |u_m|a_1|\cdots |a_n |u_1|\cdots |u_{i-1}\rangle \otimes u_i\\
\langle a_1|a_2|\cdots |a_n|\rangle \otimes[v_1|\cdots|v_r]&\longmapsto&0,
\end{eqnarray*}
where $a_1, \cdots, a_n, u_1, \cdots, u_m\in \ol{A}$, $v_1, \cdots, v_r\in \Omega(A^{\ac})$ with $r>1$
and $\mu_i=(|u_{i+1}|+\cdots+|u_m|)(|a_1|+\cdots +|a_n|+|u_1|+\cdots+|u_{i}|+1)$.
The reader may find in Vigu\'e-Poirrier \cite{V} and Jones and McCleary \cite[\S6]{JM}
(where $p_2$ is denoted by $\overline\jmath$)
that
$$p_2:(\overline{\mathrm{CH}}_{\bullet}(\Omega(A^{\ac})), b, B)
\to(\overline{\mathrm{CH}}_{\bullet}(A^{\ac}), b, B)$$
is a morphism of mixed complexes.

It is direct to see that
$$p_2\circ q_2=id: \Omega(A^{\ac})\otimes A^{\ac}\to\Omega(A^{\ac})\otimes A^{\ac}.$$
Therefore $p_2$ is an quasi-isomorphism of $b$-complexes.

From homological algebra (see, for example, Loday \cite[Proposition 2.5.15]{Loday})
we know that for a morphism of mixed complexes, if it is a quasi-isomorphism
for the $b$-differential, then the morphism is in fact a quasi-isomorphism of mixed complexes.
The lemma is now proved.
\end{proof}

\begin{proof}[Proof of Theorem \ref{can_iso}]
The statement follows directly from Lemmas \ref{Main_Lemma2} and \ref{Main_Lemma1}.
\end{proof}

\begin{remark} Recall that we have
$$\mathrm{HH}_n(A)=\bigoplus_{i} \mathrm{HH}_{n, i+n}(A)$$ and
$$\mathrm{HH}_n(A^\ac)=\bigoplus_{i}\mathrm{HH}_{i, i+n}(A^\ac).$$
The isomorphism in Theorem~\ref{can_iso} preserves the bigrading and it does
NOT follow that $\mathrm{HH}_n(A)\simeq \mathrm{HH}_n(A^\ac)$!
\end{remark}

\subsection{Hochschild cohomology via the Koszul complex}

Now, let us go to Hochschild cohomologies.
Recall that $\mathrm{HH}^{\bullet}(A)\simeq\mathrm{Ext}^{\bullet}_{A^e}(A,A)$.
For Koszul algebras, we have
\begin{eqnarray}
\mathrm{HH}^i(A)
&=&\mathrm H_{-i}(\mathrm{Hom}_{A^e}(A\otimes_\kappa A^{\ac}_\bullet\otimes_\kappa A, A))\nonumber\\
&=&\mathrm H_{-i}(\mathrm{Hom}^\kappa(A^{\ac}_\bullet, A))\nonumber\\
&\stackrel{(\ref{TransferingHomToTensorProduct})}{=}&\mathrm H_{i}(A\otimes^\kappa A^!_{\bullet}),\label{HH_Kos}
\end{eqnarray}
where the differential of the  complex $A\otimes^\kappa A^!_\bullet$ is given
\begin{equation}\label{DifferentialInComplexComputingHochschildCohomologyKoszulAlgebra}
\delta(a\ot x)=\sum_i \Big(
  e_i a \ot e_i^* x  +(-1)^{|x|}  a  e_i  \ot xe_i^*\Big).\end{equation}

Notice that the cohomological grading coincides with the classical grading.
Since we work in the bigraded setup, we define
$$\mathrm{HH}^{ij}(A)\simeq\mathrm{H}_{-i, -j} (A\otimes^\kappa A^!)$$ and
$$\mathrm{HH}^{ij}(A^!)\simeq\mathrm{H}_{-i, -j} (A^!\otimes^{\kappa^\vee} A).$$ We obtain that
\begin{equation}\label{ComparisonTwoGradingForHochschildCoHomologyOfA}
\mathrm{HH}^n(A)=\bigoplus_{i} \mathrm{HH}^{n, n-i}(A), n\geq 0.\end{equation}
and \begin{equation}\label{ComparisonTwoGradingForHochschildCoHomologyOfA!}
\mathrm{HH}^n(A^!)=\bigoplus_{i} \mathrm{HH}^{i, -n+i}(A^!), n\geq 0.\end{equation}

Since for Koszul algebras, $(A^!)^!=A$,   we have a natural isomorphism:

\begin{theorem}[Buchweitz \cite{Buchweitz}; Beilinson-Ginzburg-Soergel \cite{BGS}; Keller \cite{Keller}]
\label{G-alg_pers}
Let $A$ be a Koszul algebra and $A^!$ be its Koszul dual algebra.
Then there are natural isomorphisms of bigraded algebras
$$\mathrm{HH}^{\bullet\bullet}(A) \stackrel{\simeq}\longrightarrow
\mathrm{HH}^{\bullet\bullet}(A^!), $$
 where the product on both
sides are the  cup product.
\end{theorem}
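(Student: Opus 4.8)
The plan is to model both Hochschild cohomology rings as homologies of twisted convolution algebras and then to invoke the duality (\ref{DualTwistingMorphismsInducingIsomorphism}), which is an isomorphism of differential bigraded \emph{algebras} and not merely of complexes. First I would record that the convolution product computes the cup product: under the identification $\ol{\mathrm{CH}}^\bullet(A;A)\simeq\mathrm{Hom}^\pi(\mathrm B(A),A)$, the deconcatenation coproduct of the tensor coalgebra $\mathrm{B}(A)=T^c(s\ol A)$ turns $f*g=\mu\circ(f\ot g)\circ\Delta$ into precisely the cup product formula, so $\mathrm{HH}^{\bullet\bullet}(A)$ is the homology of $\mathrm{Hom}^\pi(\mathrm B(A),A)$ as a bigraded algebra.

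Next I would pass from the bar construction to the Koszul dual coalgebra. Since $A$ is Koszul, $i\colon A^\ac\rightarrowtail\mathrm{B}(A)$ is a quasi-isomorphism of differential bigraded coalgebras and $\kappa=\pi\circ i$, so precomposition with $i$ is an algebra homomorphism $\mathrm{Hom}^\pi(\mathrm B(A),A)\to\mathrm{Hom}^\kappa(A^\ac,A)$ and a quasi-isomorphism; hence $\mathrm{Hom}^\kappa(A^\ac,A)$ computes $\mathrm{HH}^{\bullet\bullet}(A)$ with the cup product. The same reasoning applied to the Koszul algebra $A^!$, using the quasi-isomorphism of coalgebras $q^\vee\colon A^\vee\to\mathrm{B}(A^!)$ and the compatibility $\kappa^\vee=\pi'\circ q^\vee$, shows that $\mathrm{Hom}^{\kappa^\vee}(A^\vee,A^!)$ computes $\mathrm{HH}^{\bullet\bullet}(A^!)$ with its cup product, where $A^\vee=(A^!)^\ac$ is the Koszul dual coalgebra of $A^!$.

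The heart of the argument is then (\ref{DualTwistingMorphismsInducingIsomorphism}) applied to the twisting morphism $\kappa\colon A^\ac\to A$. Because $(A^\ac)^\vee=A^!$ and $\kappa^\vee$ is exactly the dual twisting morphism, the canonical map $\varphi\mapsto\big(f\mapsto(-1)^{|f||\varphi|}f\circ\varphi\big)$ is a quasi-isomorphism of differential bigraded algebras
\[
\mathrm{Hom}^\kappa(A^\ac,A)\stackrel{\simeq}{\longrightarrow}\mathrm{Hom}^{\kappa^\vee}(A^\vee,A^!).
\]
Passing to homology yields the asserted natural isomorphism of bigraded algebras $\mathrm{HH}^{\bullet\bullet}(A)\stackrel{\simeq}{\to}\mathrm{HH}^{\bullet\bullet}(A^!)$ intertwining the two cup products; after transport along (\ref{TransferingHomToTensorProduct}) this is the isomorphism $A\ot^\kappa A^!\simeq A^!\ot^{\kappa^\vee}A$ underlying the computations (\ref{HH_Kos}). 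Naturality and the preservation of the bigrading are automatic, since $i$, $\kappa$ and the duality map are all canonical and bigraded.

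I expect the only genuine difficulty to be the product compatibility together with the sign bookkeeping: one must check that convolution on $\mathrm{Hom}^\kappa(A^\ac,A)$ really equals the cup product after the comparison with $\mathrm{B}(A)$, and that the sign $(-1)^{|f||\varphi|}$ in the duality map is consistent with the Koszul sign rule across the bigrading. Once these routine but delicate sign verifications are in place, every arrow in the chain is a differential bigraded algebra quasi-isomorphism, and the theorem follows formally.
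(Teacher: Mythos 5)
Your proposal is correct and follows essentially the same route as the paper's proof: both identify the two Hochschild cochain complexes with twisted convolution algebras $\mathrm{Hom}^\pi(\mathrm B(A),A)$ and $\mathrm{Hom}^{\pi'}(\mathrm B(A^!),A^!)$, contract them onto $\mathrm{Hom}^\kappa(A^\ac,A)$ and $\mathrm{Hom}^{\kappa^\vee}(A^\vee,A^!)$ via the coalgebra quasi-isomorphisms $i$ and $q^\vee$, and then link the two sides by the duality map of (\ref{DualTwistingMorphismsInducingIsomorphism}). The only cosmetic difference is that the paper transports everything through (\ref{TransferingHomToTensorProduct}) into the tensor-product models $A\ot^\kappa A^!$ and $A^!\ot^{\kappa^\vee}A$, which you do only at the end.
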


\begin{proof}
We have the following diagram
$$
\xymatrix{\ol{\mathrm{CH}}^{\bullet }(A;A)\ar[r]^-{\simeq}
&\mathrm{Hom}^\pi(\mathrm B(A), A)\ar[r]_-{\simeq}^-{(\ref{TransferingHomToTensorProduct})}
&A\ot^\pi \Omega(A^\vee)
\ar[r]^-{(\ref{InducedMorphismBetweenNewTwistedTensorProduct})}_-{\simeq}& A\ot^\kappa A^!\ar[d]^{(\ref{DualTwistingMorphismsInducingIsomorphism})}
\\
\ol{\mathrm{CH}}^{\bullet }(A^!;A^!)\ar[r]^-{\simeq}
&\mathrm{Hom}^{\iota^\vee}(\mathrm B(A^!), A^!)\ar[r]_-{\simeq}^-{(\ref{TransferingHomToTensorProduct})}
&A^!\ot^{{\iota^\vee}}\Omega(A^\ac)
\ar[r]^-{(\ref{InducedMorphismBetweenNewTwistedTensorProduct})}_-{\simeq}&A^!\ot^{\kappa^\vee} A.
}$$
Each map in the diagram is an isomorphism of DG algebras.
This completes the proof. Notice that by the following commutative diagram
$$\xymatrix{
  \Omega(A^\ac)\ot^\iota A^\ac \ar[d]_{(\ref{DualTwistingMorphismsInducingIsomorphism})}^-\simeq
\ar[r]^-{(\ref{InducedMorphismBetweenNewTwistedTensorProduct})}_-{\simeq}&
A\ot^\kappa A^!\ar[d]_{(\ref{DualTwistingMorphismsInducingIsomorphism})}^-\simeq
\\
 A^!\ot^{{\iota^\vee}}\Omega(A^\ac)
 \ar[r]^-{(\ref{InducedMorphismBetweenNewTwistedTensorProduct})}_-{\simeq}&A^!\ot^{\kappa^\vee} A.
}$$
we have a sequence of isomorphisms of differential bigraded algebras
\begin{equation}\label{IsomorphismsHochschildCohomologyKoszul}
\ol{\mathrm{CH}}^{\bullet }(A;A) \simeq A\ot^\kappa A^!\simeq
\Omega(A^\ac)\ot^\iota A^\ac\simeq \ol{\mathrm{CH}}^{\bullet }(A^!;A^!).
\qedhere
\end{equation}
\end{proof}

\begin{remark} As in the homology case,
the above isomorphism preserves the bigrading, but not the usual grading.
\end{remark}

\begin{remark}
We learned from Keller \cite{Keller} that Buchweitz \cite{Buchweitz}
first proves the above isomorphism as graded algebras, while in
the same paper Keller proves that this isomorphism is in fact
an isomorphism of Gerstenhaber algebras, where the Gerstenhaber
Lie bracket can be interpreted as the Lie algebra of the derived Picard group of the algebra.
A little bit earlier than Keller,
Beilinson-Ginzburg-Soergel \cite{BGS} prove that
the isomorphism is an isomorphism of graded associative algebras.
The proof of Keller and Beilinson-Ginzburg-Soergel uses
derived categories, where they first show that  certain derived categories
of a Koszul algebra and its Koszul dual are derived equivalent,
and then show that the Hochschild cohomology is an invariant of the derived category.
\end{remark}

\subsection{The linear-quadratic Koszul algebras}
In the original work of Priddy \cite{Priddy}, being Koszul
means linear-quadratic Koszul, where being Koszul in the sense of Definition \ref{DefKoszul}
is a special case.

As before, $V$ is a finite dimensional vector space.
Let $R\subset V\oplus V^{\otimes 2}$ and consider
$$A:=T(V)/(R),$$
which is called a {\it linear-quadratic algebra}. Without loss of generality
we may assume $R\cap V=\{0\}$ (by removing those elements in $R\cap V$,
this can be satisfied without affecting $A$).
Under this assumption, if we denote
$qR$ to be the image of the projection of $R$ to $V^{\otimes 2}$,
we obtain a map
$$\phi: qR\to V$$
such that $R=\{X-\phi(X)|X\in qR\}$.
Denote by $(qA)^{\ac}$ the quadratic dual coalgebra of $T(V)/(qR)$, then this $\phi$ gives a map
$$d_\phi: (qA)^{\ac}\twoheadrightarrow qR\to V,$$
which extends to a coderivation $d_\phi: (qA)^{\ac}\to T(V)$.

Now if
\begin{equation*}
\{R\otimes V+V\otimes R\}\cap R^{\otimes 2}\subset qR,
\end{equation*}
then the images of $d_\phi$ lie in $(qA)^{\ac}$. We in fact get
a co-derivation
$$d_\phi: (qA)^{\ac}\to (qA)^{\ac}.$$
And if furthermore,
\begin{equation}\label{linear-quadratic}
\{R\otimes V+V\otimes R\}\cap R^{\otimes 2}\subset R\otimes V^{\otimes 2},
\end{equation}
then $(d_\phi)^2=0$,
and we obtain a DG coalgebra $((qA)^{\ac}, d_\phi)$.
For more details, see \cite[\S3.6]{LV}.

\begin{definition}[Linear-quadratic Koszul algebras]\label{strict_Koszul}
Let $V, R$ and $A$ be as above. $A$ is called a {\it linear-quadratic Koszul algebra}
if $R$ satisfies (\ref{linear-quadratic}) and the associated $T(V)/(qR)$ is Koszul.
\end{definition}

\begin{example}[Universal enveloping algebras]
Suppose $\mathfrak g$ is a Lie algebra over $\k$,
then the universal enveloping algebra
$U(\mathfrak g)$ is a linear-quadratic Koszul algebra, whose Koszul dual DG coalgebra
is the Chevalley-Eilenberg chain complex $\mathrm C_{\bullet}(\mathfrak g)$ of $\mathfrak g$.
\end{example}

The reader may check that almost all statements about Koszul algebras in
the above and below sections also
hold for linear-quadratic Koszul algebras (except that
all differentials involved now have an extra term coming
from the differential of the Koszul dual coalgebra).

\begin{remark}\label{general_Koszul}
In most literature, an algebra being Koszul is in the sense of Definition \ref{DefKoszul}
or \ref{strict_Koszul}.
In the introduction, we mentioned Koszul duality in the general sense,
where the Koszul dual of an algebra is defined to be
$\mathrm{Ext}^{\bullet}_A(\k,\k)$ (equipped with the associated $A_\infty$
structure), or even more generally,
the dual space of its bar construction. Koszul duality in this general sense has been used in, for
example, \cite{LPWZ} and \cite{VdB3}.
\end{remark}

\section{Koszul Calabi-Yau algebras}\label{Sect_KCY}

In the following, for an associative algebra $\Lambda$, by $\mathcal{D}(\Lambda)$ we
mean the derived category of left $\Lambda$-modules and $\Sigma$
denoted the translation functor in the derived category.

\subsection{Definition of Calabi-Yau algebras}
\begin{definition}[Ginzburg \cite{Ginzburg}]\label{Def_CY}
A (bi-)graded algebra $A$ is said to be \textit{Calabi-Yau of dimension $n$} (or $n$-Calabi-Yau for short)
if $A$ is homologically smooth and
there exists an isomorphism
\begin{equation}\label{Iso_CY}
\eta: \mathrm{RHom}_{A^e}(A, A\ot A)\longrightarrow\Sigma^{n} A(\ell)
\end{equation}
in the derived category of (bi-)graded left bimodules $\mathcal{D}(A^e)$,
where being homologically smooth means $A$ is a perfect $A^e$-module,
{\it i.e.}, $A$ admits a bounded resolution of finitely generated projective (bi-)graded
$A^e$-modules, and $n, \ell$ are integers.
\end{definition}

Notice that in the above definition, $A\ot A$ is considered as a left $A^e$-module,
using the outer structure, that is, $(a\ot b^{op})\cdot (x\ot y)=(-1)^{|b|(|x|+|y|)}ax\ot yb$
for $x\ot y\in A\ot A$ and $a\ot b^{op}\in A^e$; it is  thus an object in $\mathcal{D}(A^e)$
and the expression $\mathrm{RHom}_{A^e}(A, A\ot A)$ makes sense. It has another
$A^e$-module structure using the internal structure, that is,
$(a\ot b^{op})\cdot (x\ot y)=(-1)^{|a||b|+|a||x|+|x||b|}xb\ot ay$,
which induces a left $A^e$-module structure over
$\mathrm{RHom}_{A^e}(A, A\ot A)$, which says that the isomorphism (\ref{Iso_CY}) has a meaning.

In the original definition of Ginzburg the isomorphism $\eta$ is required to be self dual, which is proved
by Van den Bergh (\cite[Appendix C]{VdB3}) to be automatically satisfied. However, on the other
hand, the isomorphism
(\ref{Iso_CY}) may not be unique.


Suppose $A$ is Koszul. Recall that \begin{equation}\label{Res_A}
K'(A)=A\otimes_\kappa A^{\ac}_\bullet\otimes_\kappa A=\{
\cdots \longrightarrow A\otimes A^{\ac}_1\otimes A\longrightarrow A\otimes A^{\ac}_0\otimes A\}.
\end{equation}
is free resolution of $A$ as an $A^e$-module, and therefore
\begin{eqnarray}
\mathrm{RHom}_{A^e}(A, A\ot A)&\simeq&
\mathrm{Hom}_{A^e}(A\otimes_\kappa A^{\ac}_\bullet\otimes_\kappa A, A\ot A)\nonumber\\
&\simeq&\mathrm{Hom}_\k(A^{\ac}_\bullet, A\ot A)\nonumber\\
&\simeq&(A\otimes A^!_{-\bullet}\otimes A, \partial)\label{Ko_Res}
\end{eqnarray}
in $\mathcal{D}(A^e)$. Notice that in the third isomorphism,
we changed the position of two $A$'s, so the original internal structure becomes the outer
structure and the differential in the complex $(A\otimes A^!_{-\bullet}\otimes A, \partial)$ is given by
\begin{equation}\label{DifferentialForComplexComputingKoszulCalabi-Yau}
\partial(1\ot a\ot 1)=\sum_j(e_j\ot e_j^*a\ot 1+(-1)^{|a|} 1\ot ae_j^*\ot e_j),
\end{equation}
where  $\{e_i\}$ is a chosen basis of $V$, and   $\{e_i^*\}$ is the dual basis of $V^*$.

Now if furthermore $A$ is Calabi-Yau, then
\begin{equation}\label{Frob_id}
A\otimes A^!_{-\bullet}\otimes A\stackrel{(\ref{Ko_Res})}\simeq
\mathrm{RHom}_{A^e}(A, A\ot A)
\stackrel{(\ref{Iso_CY})}\simeq\Sigma^{n} A(\ell)
\stackrel{(\ref{Res_A})}\simeq \Sigma^{n}(A\otimes_\kappa A^{\ac}(\ell)_\bullet\otimes_\kappa A)
\end{equation}
in $\mathcal{D}(A^e)$, which implies
\begin{equation}
\label{KD}
\psi: A^!_{-\bullet}\simeq A^{\ac}_{\bullet-n}(\ell)
\end{equation}
in $\mathcal{D}(\k)$.

\begin{remark} The isomorphism (\ref{KD}) shows that for $i\geq 0$,
$A^!_i=A^!_{-i, -i}$ lying in complete degree $(-i, -i)$ is isomorphic to
$A^\ac_{n-i}=A^\ac_{n-i, n-i}$
placed in complete degree $(n-i-\ell, n-i-\ell)$.
Since all the isomorphisms in (\ref{Frob_id}) preserve the bigrading,
we obtain that $n=\ell$. This means that for (bi)graded Koszul Calabi-Yau algebra,
the Calabi-Yau dimension $n$ is equal to the shift $\ell$ in Definition~\ref{Def_CY}.
This fact, well-known to the expert, seems not  have been written out explicitly.

\end{remark}

The isomorphism (\ref{KD}) leads to the following result,
which is originally due to Van den Bergh \cite{VdB0,VdB3}:

\begin{proposition}[Van den Bergh]\label{PDVdB}
Suppose that $A$ is a Koszul algebra.
Then $A$ is $n$-Calabi-Yau
if and only if $A^{!}$ is a cyclic algebra of degree $n$.
\end{proposition}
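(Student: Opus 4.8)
The plan is to show that, for a Koszul algebra, the Calabi--Yau property and the cyclicity of $A^!$ encode one and the same datum: a non-degenerate, invariant, graded-symmetric pairing of degree $-n$ on $A^!$. First I would settle the finiteness bookkeeping common to both sides. For a Koszul algebra the minimal $A^e$-free resolution is exactly $K'(A)=A\ot_\kappa A^\ac_\bullet\ot_\kappa A$, whose differential (\ref{DifferentialInKoszulBimoduleResolutionOfA}) has all coefficients in the augmentation ideal $\overline{A^e}$; hence $A$ is homologically smooth if and only if $K'(A)$ is bounded, i.e.\ if and only if $A^\ac$ (equivalently $A^!=(A^\ac)^\vee$) is finite-dimensional. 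Since a cyclic algebra of degree $n$ is by definition finite-dimensional and concentrated in weights $0,\dots,n$, the finiteness clause of Definition \ref{Def_CY} and the cyclicity hypothesis both amount to this condition, and I work under it throughout.

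For the implication ``$A$ is $n$-Calabi--Yau $\Rightarrow A^!$ is cyclic'', I would begin from (\ref{Ko_Res}), which represents $\mathrm{RHom}_{A^e}(A, A\ot A)$ by the complex $(A\ot A^!_{-\bullet}\ot A,\partial)$ of free $A^e$-modules with differential (\ref{DifferentialForComplexComputingKoszulCalabi-Yau}), while the object $\Sigma^n A(\ell)$ of (\ref{Iso_CY}) is represented by the shifted Koszul resolution $\Sigma^n K'(A)(\ell)$. Both are \emph{minimal} complexes of finitely generated free $A^e$-modules, so the derived-category isomorphism $\eta$ is realized by an honest isomorphism of complexes $\Phi$; reducing $\Phi$ modulo $\overline{A^e}$ kills the differentials and recovers the graded isomorphism $\psi$ of (\ref{KD}), forcing $n=\ell$ as in the preceding remark. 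Composing $\psi:A^!_p\xrightarrow{\sim}A^\ac_{n-p}$ with the graded duality $A^\ac_{n-p}=(A^!_{n-p})^*$ yields a non-degenerate pairing $\langle-,-\rangle:A^!_p\ot A^!_{n-p}\to\k$ of degree $-n$. That this pairing is \emph{invariant}, $\langle ab,c\rangle=\langle a,bc\rangle$, is precisely the content of $\Phi$ being a \emph{chain} map: commuting $\Phi$ with the two differentials and reducing intertwines multiplication by generators of $A^!$ with the comultiplication of $A^\ac$, which dualizes to invariance. Graded symmetry is the self-duality of $\eta$, automatic by Van den Bergh (\cite[Appendix C]{VdB3}). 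Hence $A^!$ is cyclic of degree $n$.

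The converse runs the same dictionary backwards. Given a cyclic structure on $A^!$, I would define $\psi:A^!_p\to A^\ac_{n-p}$ by $a\mapsto\langle a,-\rangle$; non-degeneracy makes each $\psi$ an isomorphism, and invariance of $\langle-,-\rangle$ is exactly what is needed for the family $\{\psi\}$ to assemble into a chain isomorphism $\Phi$ from $(A\ot A^!_{-\bullet}\ot A,\partial)$ to $\Sigma^n K'(A)(n)$. Through (\ref{Ko_Res}) and (\ref{Res_A}) this $\Phi$ is an isomorphism $\mathrm{RHom}_{A^e}(A, A\ot A)\simeq\Sigma^n A(n)$ in $\mathcal D(A^e)$, and homological smoothness is supplied by the finite-dimensionality of $A^!$; thus $A$ is $n$-Calabi--Yau.

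I expect the real work to lie not in the linear-algebra duality but in the compatibility with the algebra structures: checking, in both directions, that invariance of the cyclic pairing corresponds to $\Phi$ being a chain map for the explicit differentials (\ref{DifferentialForComplexComputingKoszulCalabi-Yau}) and (\ref{DifferentialInKoszulBimoduleResolutionOfA}). Concretely, one must track how left and right multiplication by the generating space $V^*\subset A^!$ matches, under $\psi$, the cogenerating coactions of $A^\ac$, keeping the Koszul signs straight. This sign- and structure-bookkeeping, rather than any conceptual hurdle, is the main obstacle.
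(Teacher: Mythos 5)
Your proof is correct and follows essentially the same route as the paper's: both reduce the statement to showing that the graded isomorphism $\psi$ of (\ref{KD}) is an isomorphism of $A^!$-bimodules (equivalently, that the induced pairing on $A^!$ is non-degenerate, invariant and cyclic), and both obtain this from the minimality of the Koszul bimodule resolution together with the observation that its differential (\ref{DifferentialForComplexComputingKoszulCalabi-Yau}) is exactly left and right multiplication by the generators $e_j^*$ of $A^!$. The only additions are your explicit bookkeeping of homological smoothness versus finite-dimensionality of $A^!$ and of where the graded symmetry comes from, points the paper leaves implicit by citing Rickard and Van den Bergh.
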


Recall that a (bi-)graded associative algebra $A$ is {\it cyclic} of degree $n$
if it admits a   non-degenerate bilinear pairing
$\langle-,-\rangle: A\times A\to \k(n)$
such that $$\langle a\cdot b,c\rangle=(-1)^{(|a|+|b|)|c|}\langle c\cdot  a,b\rangle,\quad\mbox{for}\;\;
a,b,c\in A.$$
If $A$ is a DG algebra, then the pairing should furthermore satisfy
$$\langle d(a),b\rangle+(-1)^{|a|}\langle a,d(b)\rangle=0.$$

\begin{proof}[Proof of Proposition \ref{PDVdB}]
It is well-known that $$A^!_\bullet\simeq  A^{\ac}_{n-\bullet}(n)$$
as $A^!$-bimolules is equivalent
to that $A^!$ is cyclic (see, for example, Rickard \cite[Theorem 3.1]{Rickard}).
Therefore, we only need to show the isomorphism given by (\ref{KD})
is compatible with the $A^!$ actions (see, for example, Smith \cite[Proposition 5.10]{Smith}).
This is true because the resolution $A\otimes_\kappa A^{\ac}\otimes_\kappa A$ of
$A$ as $A^e$-module is minimal
(the differential has no linear terms), which is then unique up to isomorphism, and
the differential corresponds to the multiplication of generators of $A^!$
on $A^!$ and $A^{\ac}$ respectively.
More precisely,
for $a\in A^!_i$,
\begin{eqnarray*}b(\mathrm{Id}_A\ot \psi\ot \mathrm{Id}_A)(1\ot a\ot 1)
&=&b(1\ot \psi(a)\ot 1)\\
&=& \sum_j(e_j\ot e_j^*\psi(a)\ot 1+(-1)^i 1\ot \psi(a)e_j^*\ot e_j),\end{eqnarray*}
and
\begin{eqnarray*}
(1_A\ot \psi\ot 1_A)\partial(1\ot a\ot 1)
&=&(1_A\ot \psi\ot 1_A)\Big(\sum_j(e_j\ot e_j^*a\ot 1+(-1)^i 1\ot ae_j^*\ot e_j)\Big)\\
&=& \sum_j(e_j\ot \psi(e_j^*a)\ot 1+(-1)^i 1\ot \psi(ae_j^*)\ot e_j).\end{eqnarray*}
That is, we
have the following commutative diagram
$$
\xymatrixcolsep{5pc}
\xymatrix{ A\ot A^!_i\ot A \ar[r]^-{(\ref{Frob_id})}_{1_A\ot \psi\ot 1_A}
\ar[d]^\partial_{(\ref{DifferentialForComplexComputingKoszulCalabi-Yau})}
& A\ot_\kappa A^\ac_{n-i}\ot_\kappa A\ar[d]_{(\ref{DifferentialInKoszulBimoduleResolutionOfA})}^b\\
A\ot A^!_{i+1}\ot A\ar[r]^-{(\ref{Frob_id})}_{1_A\ot \psi\ot 1_A}& A\ot_\kappa A^\ac_{n-i-1}\ot_\kappa A.}
$$
This shows that $\psi$ is an isomorphism of $A^!$-bimodules, which completes the proof.
\end{proof}

\subsection{The noncommutative Poincar\'e duality}

The noncommutative Poincar\'e duality, due to Van den Bergh,
arises from his study of the dualising complexes
in noncommutative projective geometry (\cite{VdB,VdB0,VdB1}).
Lambre understands it from the viewpoint of differential calculus with duality (\cite{Lambre}).
Recently,
de Thanhoffer de V\"{o}lcsey and
Van den Bergh \cite[Proposition 5.5]{VdBdTdV12} give an
explicit formula for Van den Bergh's Poincar\'e duality
for Calabi-Yau algebras:

\begin{theorem}[Poincar\'e duality of Van den Bergh]
Let $A$ be an $n$-Calabi-Yau algebra.
Then there is an isomorphism
$$
\mathrm{PD}:\mathrm{HH}^i(A;A)\stackrel{\simeq}\longrightarrow\mathrm{HH}_{n-i}(A)
$$
for each $i$.
\end{theorem}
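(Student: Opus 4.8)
The plan is to deduce the duality from the standard interplay between $\mathrm{Ext}$ and $\mathrm{Tor}$ over the enveloping algebra $A^e$ that is available whenever $A$ is homologically smooth. By Proposition~\ref{Interpret_HH} we have $\mathrm{HH}^i(A;A)=\mathrm{Ext}^i_{A^e}(A,A)$ and $\mathrm{HH}_j(A)=\mathrm{Tor}^{A^e}_j(A,A)$, so the whole statement amounts to a single natural isomorphism $\mathrm{Ext}^i_{A^e}(A,A)\simeq\mathrm{Tor}^{A^e}_{n-i}(A,A)$. First I would isolate the inverse dualizing bimodule $U:=\mathrm{RHom}_{A^e}(A,A\ot A)$, which is exactly the object appearing on the left of the Calabi-Yau isomorphism (\ref{Iso_CY}), and express $\mathrm{RHom}_{A^e}(A,A)$ through it.

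The technical heart is the following consequence of homological smoothness: since $A$ is perfect as an $A^e$-module, it may be replaced by a bounded complex $P$ of finitely generated projective $A^e$-modules, and for such a perfect complex there is a natural isomorphism in $\mathcal{D}(A^e)$
$$
\mathrm{RHom}_{A^e}(A,A)\;\simeq\;\mathrm{RHom}_{A^e}(A,A\ot A)\ot^{L}_{A^e}A\;=\;U\ot^{L}_{A^e}A .
$$
I would prove this by verifying it one projective summand at a time, where it is the elementary identity $\mathrm{Hom}_{A^e}(A^e,A^e)\ot_{A^e}A\simeq A\simeq\mathrm{Hom}_{A^e}(A^e,A)$, and then propagating along the finite complex $P$; finiteness is what guarantees that the natural comparison map is an isomorphism rather than merely a map.

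Next I would feed in the Calabi-Yau hypothesis. The isomorphism (\ref{Iso_CY}) gives $U\simeq\Sigma^{n}A(\ell)$ in $\mathcal{D}(A^e)$, so tensoring with $A$ yields
$$
\mathrm{RHom}_{A^e}(A,A)\;\simeq\;\Sigma^{n}A(\ell)\ot^{L}_{A^e}A\;\simeq\;\Sigma^{n}\big(A\ot^{L}_{A^e}A\big)(\ell).
$$
Taking cohomology and using that $A\ot^{L}_{A^e}A$ computes $\mathrm{Tor}^{A^e}_\bullet(A,A)=\mathrm{HH}_\bullet(A)$, the shift $\Sigma^{n}$ converts the $i$-th cohomology into the $(n-i)$-th homology, giving precisely $\mathrm{PD}\colon\mathrm{HH}^i(A;A)\simeq\mathrm{HH}_{n-i}(A)$. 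In the Koszul situation at hand this last step is already made completely explicit by (\ref{Ko_Res})--(\ref{KD}): the Koszul bimodule resolution $A\ot_\kappa A^\ac_\bullet\ot_\kappa A$ of Corollary~\ref{Res} realizes $U$ concretely as $(A\ot A^!_{-\bullet}\ot A,\partial)$, and the comparison map $\psi$ of (\ref{KD}) furnishes exactly the shift by $n$.

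The main obstacle I anticipate is not any individual identification but the careful bookkeeping of the two gradings and of signs so that the derived-category isomorphism descends to an honest isomorphism on (co)homology in the stated degrees. In particular one must check that the weight shift $(\ell)$, together with the equality $n=\ell$ established in the Remark following (\ref{KD}), interacts correctly with the homological shift $\Sigma^{n}$, and that the resulting $\mathrm{PD}$ is natural enough to be compatible with the cup and cap products used in later sections. For a fully self-contained and explicit formula one would follow de Thanhoffer de V\"olcsey--Van den Bergh \cite{VdBdTdV12}, writing $\mathrm{PD}$ as the cap product with a fundamental class extracted from $\eta$.
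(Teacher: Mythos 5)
Your argument is correct, but it is a genuinely different route from the one taken in the paper. You prove the statement in full generality via the standard derived-category mechanism: homological smoothness gives the natural isomorphism $\mathrm{RHom}_{A^e}(A,A)\simeq \mathrm{RHom}_{A^e}(A,A\ot A)\ot^{L}_{A^e}A$ (checked on free summands and propagated along a bounded complex of finitely generated projectives), and then the Calabi-Yau isomorphism (\ref{Iso_CY}) converts this into $\Sigma^{n}(A\ot^{L}_{A^e}A)(\ell)$, whence $\mathrm{Ext}^i_{A^e}(A,A)\simeq\mathrm{Tor}^{A^e}_{n-i}(A,A)$ by Proposition \ref{Interpret_HH}. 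This is essentially Van den Bergh's original argument and it covers arbitrary $n$-Calabi-Yau algebras, whereas the paper only supplies a proof of the Koszul case and refers to \cite{VdBdTdV12} for the general statement. The paper's proof is instead entirely explicit and avoids derived functors: it uses that the map $\psi$ of (\ref{KD}) is an isomorphism of $A^!$-bimodules (established in Proposition \ref{PDVdB}) to compare the two concrete differentials (\ref{DifferentialInKoszulComplex}) and (\ref{DifferentialInComplexComputingHochschildCohomologyKoszulAlgebra}) term by term, producing an isomorphism of complexes $(A\ot^\kappa A^!,\delta)\simeq\Sigma^{-n}(A\ot A^{\ac},b)$ as in (\ref{IsomorphismOfComplexesInducedByKD}). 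What the paper's route buys is precisely the bigraded refinement (\ref{DualityIsomorphisomInBigradingForHochschildCohomology}) and a chain-level map that slots into the commutative diagram used to prove the main theorem; what your route buys is generality and independence from Koszulness. Your closing caveat about grading and sign bookkeeping is the right one to flag: in particular the identity $n=\ell$ from the Remark after (\ref{KD}) is exactly what makes the internal shift $(\ell)$ harmless, and if you want $\mathrm{PD}$ compatible with the products in the later sections you would indeed need to realize it at the chain level (e.g.\ as cap product with a fundamental class), which is where the paper's explicit Koszul model does the work for you.
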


\begin{proof}[Proof of the Koszul case]

For the Koszul case, by comparing the differentials  (\ref{DifferentialInKoszulComplex})
and (\ref{DifferentialInComplexComputingHochschildCohomologyKoszulAlgebra}),
since  (\ref{KD}) is an isomorphism of $A^!$-bimodules, it
 induces an isomorphism of complexes
\begin{equation}\label{IsomorphismOfComplexesInducedByKD}
(A\ot^\kappa A^!,  \delta)\simeq \Sigma^{-n}(A\ot A^\ac, b),\end{equation}
and so  we have
\begin{equation}\label{DualityIsomorphisomInBigradingForHochschildCohomology}
\mathrm{HH}^{ij}(A;A)
\stackrel{(\ref{HH_Kos})}{\simeq}
\mathrm H_{-i, -j}(A\ot A^{!},\delta)
\stackrel{(\ref{IsomorphismOfComplexesInducedByKD})}{\simeq}  \mathrm H_{n-i, n-j}(A\ot A^{\ac},b)
=\mathrm{HH}_{n-i, n-j}(A).
\end{equation}
This is an isomorphism with respect to the bigrading. For the classical grading, we have
$$\mathrm{HH}^i(A)
\stackrel{(\ref{ComparisonTwoGradingForHochschildCoHomologyOfA})}
{\simeq}\bigoplus_{j} \mathrm{HH}^{i, i-j}(A) \stackrel{(\ref{DualityIsomorphisomInBigradingForHochschildCohomology})}
{\simeq} \bigoplus_j \mathrm{HH}_{n-i, n-i+j}(A) \stackrel{(\ref{ComparisonTwoGradingForHochschildHomologyOfA})}
{\simeq} \mathrm{HH}_{n-i}(A).  $$
This completes the proof.
\end{proof}

\subsection{Example of the universal enveloping algebras}

Recall that a finite dimensional Lie algebra $\mathfrak g$ is called {\it unimodular}
if the traces of the adjoint actions are zero, {\it i.e.} $\mathrm{Tr}(\mathrm{ad}_g(-))=0$,
for all $a\in\mathfrak g$.
Examples of unimodular Lie algebras are semi-simple Lie algebras, Heisenberg Lie algebras,
Lie algebras of compact Lie groups, etc. However, not
all Lie algebras are unimodular; for example, consider $\mathrm{Span}_{\k}\{x,y\}$ with $[x, y]=x$, it is not
unimodular.

It is nowadays well-known that the universal enveloping algebra $U(\mathfrak g)$ of a unimodular Lie algebra
$\mathfrak g$ is Calabi-Yau ({\it cf}. \cite[Proposition 3.3]{HVZ}).
In the following we give a simplified proof of this fact by using Koszul duality.

Assume $\mathfrak g$ is an $n$-dimensional Lie algebra over $\k$.
The Koszul dual algebra and coalgebra of $U(\mathfrak g)$ are the Chevalley-Eilenberg
cochain complex
$(\mathrm C^{\bullet}(\mathfrak g), \delta)$ and chain complex $(\mathrm C_{\bullet}(\mathfrak g),\partial)$,
respectively.
Choose a nonzero element $\Omega\in\mathrm C_{n}(\mathfrak g)$, then we have an
isomorphism
$$
\begin{array}{cccl}
\psi:&\mathrm C^i(\mathfrak g)&\longrightarrow& \mathrm C_{n-i}(\mathfrak g)\\
&f&\longmapsto& f\cap\Omega,
\end{array}
$$
of vector spaces, for $i=0,1,\cdots, n$.

\begin{lemma}\label{PD_unimodular}
Let $\mathfrak g$ be an $n$-dimensional
Lie algebra. Then
$$\psi: \mathrm C^{\bullet}(\mathfrak g)\longrightarrow\mathrm C_{n-{\bullet}}(\mathfrak g)$$
defined above is an isomorphism of chain complexes if and only if $\mathfrak g$ is unimodular.
\end{lemma}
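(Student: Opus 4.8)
The plan is to separate the statement into two parts: first, that $\psi$ is a degree-by-degree isomorphism of vector spaces regardless of $\mathfrak g$, and second, that it commutes with the two differentials precisely when $\mathfrak g$ is unimodular. The first part is immediate. Identifying $\mathrm C^i(\mathfrak g)$ with $\bigwedge^i\mathfrak g^*$ and $\mathrm C_{n-i}(\mathfrak g)$ with $\bigwedge^{n-i}\mathfrak g$, the cap product $f\mapsto f\cap\Omega$ is contraction of the volume element $\Omega$, a generator of the one-dimensional space $\bigwedge^n\mathfrak g$. Contraction against such a generator is the classical Poincar\'e duality of the exterior algebra, hence a linear isomorphism $\bigwedge^i\mathfrak g^*\stackrel{\simeq}{\longrightarrow}\bigwedge^{n-i}\mathfrak g$ in each degree. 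So the entire content of the lemma is whether $\psi$ intertwines the coboundary $\delta$ on $\mathrm C^\bullet(\mathfrak g)$ with the boundary $\partial$ on $\mathrm C_\bullet(\mathfrak g)$, up to the inevitable degree-dependent sign.

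The key step is an identity measuring the failure of this compatibility. Let $\chi\in\mathrm C^1(\mathfrak g)=\mathfrak g^*$ be the \emph{modular character}, defined by $\chi(x)=\mathrm{Tr}(\mathrm{ad}_x)$, so that $\mathfrak g$ is unimodular exactly when $\chi=0$. I would prove that $\psi$ intertwines $\partial$ not with $\delta$ but with the $\chi$-\emph{twisted} Chevalley--Eilenberg coboundary $\delta_\chi:=\delta+\chi\wedge(-)$; that is,
$$\partial\circ\psi=\pm\,\psi\circ\bigl(\delta+\chi\wedge(-)\bigr),$$
the sign depending only on the cochain degree. Concretely, one fixes a basis $\{e_m\}$ of $\mathfrak g$ with dual basis $\{e^m\}$ and structure constants $[e_i,e_j]=\sum_k c_{ij}^k e_k$, writes $\Omega=e_1\wedge\cdots\wedge e_n$, and compares $\psi(\delta f)$ with $\partial(\psi f)$ term by term. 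Invariantly, the same identity emerges from Cartan calculus, relating the contraction operators $\iota_x$, the Lie derivatives on $\bigwedge\mathfrak g$ and $\bigwedge\mathfrak g^*$, and the two differentials; this is the twisted Poincar\'e duality for Lie algebra (co)homology with coefficients in the modular representation $\bigwedge^n\mathfrak g$.

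With the identity in hand the lemma follows in both directions. If $\mathfrak g$ is unimodular then $\chi\wedge(-)=0$ and the identity reduces to $\partial\circ\psi=\pm\,\psi\circ\delta$, so $\psi$ is an isomorphism of chain complexes. Conversely, it suffices to test the identity on $1\in\mathrm C^0(\mathfrak g)$: there $\delta 1=0$ and $\chi\wedge 1=\chi$, so the identity reads $\partial\Omega=\pm\,\psi(\chi)$. A direct computation gives $\partial\Omega=\sum_m(-1)^m\mathrm{Tr}(\mathrm{ad}_{e_m})\,e_1\wedge\cdots\widehat{e_m}\cdots\wedge e_n$, so if $\psi$ were a chain map then $\partial\Omega=0$ and, $\psi$ being injective, $\chi=0$, i.e.\ $\mathfrak g$ is unimodular. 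The main obstacle is the sign-careful verification of the twisting identity in all degrees: one must check that the discrepancy between $\partial\psi$ and $\psi\delta$ is exactly the contraction against $\Omega$ of $\chi\wedge(-)$, with the adjoint trace surfacing from the diagonal structure constants $\sum_i c_{mi}^i$. The degree-zero computation above is the transparent special case that already isolates the trace and settles the ``only if'' direction on its own.
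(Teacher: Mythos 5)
Your proposal is correct, and it packages the argument differently from the paper. The paper defines the intersection pairing $\langle u,v\rangle := u\wedge v/\Omega$, observes that $\psi$ being a chain map is equivalent to $\langle\partial u,v\rangle+(-1)^i\langle u,\partial v\rangle=0$, and then computes this defect directly on a decomposable pair $u=g_1\wedge\cdots\wedge g_i$, $v=g_i\wedge\cdots\wedge g_n$, finding it equals $(-1)^i\mathrm{Tr}(\mathrm{ad}_{g_i})$; both directions of the equivalence fall out of that single formula. You instead isolate the defect as a global operator identity, $\partial\circ\psi=\pm\,\psi\circ(\delta+\chi\wedge(-))$ with $\chi$ the modular character --- the twisted Poincar\'e duality coming from the identification $\Wedge^n\mathfrak g\simeq\k_\chi$ as a $\mathfrak g$-module --- which gives the ``if'' direction immediately once $\chi=0$, and you settle the ``only if'' direction by the degree-zero test $\partial\Omega=\pm\,\psi(\chi)$ together with injectivity of $\psi$. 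Your formula $\partial\Omega=\sum_m(-1)^m\mathrm{Tr}(\mathrm{ad}_{e_m})\,e_1\wedge\cdots\widehat{e_m}\cdots\wedge e_n$ checks out, and in fact your degree-zero observation is exactly the remark the paper makes right after its proof (``$\mathfrak g$ is unimodular if and only if any nonzero top chain is a Chevalley--Eilenberg cycle''). The underlying structure-constant computation is the same in both treatments; what your version buys is a conceptual identification of the obstruction as the modular character in all degrees at once, at the cost of deferring the sign-careful verification of the twisted identity, which is the same amount of work the paper spends on its pairing computation.
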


\begin{proof}
Define the intersection product
$ \mathrm C_i(\mathfrak g)\times \mathrm C_{n-i}(\mathfrak g)\to \k$
by
$$
(u, v)\mapsto\langle u,v\rangle:= u\wedge v/\Omega,
$$
where the right-hand side means the scalar multiplicity of $u\wedge v$ with respect to $\Omega$.
That $\psi$ is an isomorphism of chain complexes is
equivalent to that the intersection product respects the differential.
In fact, without loss of generality,
we may assume $u= g_1\wedge\cdots\wedge g_i$, $v=  g_i\wedge\cdots\wedge g_n$.
Then
\begin{eqnarray*}
\partial(u)\wedge v&=&\sum_{1\le j< i}(-1)^{j+i} {[g_j, g_i]}\wedge  g_1\wedge\cdots\hat{ g}_j\cdots\wedge g_n\\
&=&\sum_{1\le j<i}(-1)^i g_1\wedge\cdots g_{j-1}\wedge {[g_j,g_i]}\wedge g_{j+1}\wedge\cdots\wedge g_n\\
&=&(-1)^i\mathrm{Tr}(\mathrm{ad}_{g_i}|_{\mathrm{span}\{g_1,\cdots, g_{i-1}\}})\cdot g_1\wedge\cdots\wedge g_n.
\end{eqnarray*}
Similarly, one may check
\begin{eqnarray*}
u\wedge \partial(v)&=&-\mathrm{Tr}(\mathrm{ad}_{g_i}|_{\mathrm{span}\{g_{i+1},
\cdots, g_n\}})\cdot g_1\wedge\cdots\wedge g_n.
\end{eqnarray*}
Thus $\langle\partial(u),v\rangle+(-1)^i\langle u,\partial(v)\rangle=(-1)^i\mathrm{Tr}(\mathrm{ad}_{g_i})$,
which is zero if and only if
$\mathfrak g$ is unimodular.
\end{proof}

Alternatively, $\mathfrak g$ is unimodular if and only if any nonzero
top chain $\Omega\in\mathrm C_n(\mathfrak g)$ is a Chevalley-Eilenberg cycle.
This is a chain version of the fact
that the cohomology of finite dimensional unimodular Lie algebras
admits Poincar\'e duality ({\it cf}. \cite[Chapter V]{GHV}).

\begin{theorem}[He-Van Oystaeyen-Zhang \cite{HVZ}]
Let $\mathfrak g$ be an $n$-dimensional
Lie algebra over $\k$.
Then $U(\mathfrak g)$ is $n$-Calabi-Yau if and only if $\mathfrak g$ is unimodular.
\end{theorem}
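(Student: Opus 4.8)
The plan is to deduce the theorem by combining the two tools already assembled: Van den Bergh's criterion (Proposition \ref{PDVdB}) and the cap-product computation (Lemma \ref{PD_unimodular}). Since $U(\mathfrak{g})$ is a linear-quadratic Koszul algebra whose Koszul dual algebra is the Chevalley-Eilenberg cochain DG algebra $(\mathrm{C}^{\bullet}(\mathfrak{g}),\delta)$ --- with underlying graded algebra the exterior algebra $\bigwedge^{\bullet}\mathfrak{g}^*$ --- Proposition \ref{PDVdB} (valid in the linear-quadratic setting by the remark following Definition \ref{strict_Koszul}) reduces the Calabi-Yau property of $U(\mathfrak{g})$ to the assertion that $\mathrm{C}^{\bullet}(\mathfrak{g})$ is a cyclic DG algebra of degree $n$. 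Thus everything comes down to showing that $\mathrm{C}^{\bullet}(\mathfrak{g})$ admits a non-degenerate, graded-cyclically invariant pairing compatible with $\delta$ precisely when $\mathfrak{g}$ is unimodular.

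First I would produce the candidate pairing. Fixing a nonzero top chain $\Omega\in\mathrm{C}_n(\mathfrak{g})=\bigwedge^n\mathfrak{g}$ and using the identification $\mathrm{C}_{n-i}(\mathfrak{g})\cong(\mathrm{C}^{n-i}(\mathfrak{g}))^*$, the cap-product isomorphism $\psi$ of Lemma \ref{PD_unimodular} transports into the bilinear form $\langle f,g\rangle:=\langle g, f\cap\Omega\rangle$ on $\mathrm{C}^{\bullet}(\mathfrak{g})$, which up to sign equals the wedge-and-integrate form $\langle f,g\rangle=\lambda(f\wedge g)$, where $\lambda$ is the linear functional on $\bigwedge^n\mathfrak{g}^*$ dual to $\Omega$. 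On the level of the underlying exterior algebra this form is manifestly non-degenerate (the exterior algebra is graded Frobenius with one-dimensional socle $\bigwedge^n\mathfrak{g}^*$), and one checks, using the graded commutativity of $\wedge$, that it satisfies the cyclic invariance $\langle a\cdot b,c\rangle=(-1)^{(|a|+|b|)|c|}\langle c\cdot a,b\rangle$. So the only condition that can fail is the DG-compatibility $\langle\delta f,g\rangle+(-1)^{|f|}\langle f,\delta g\rangle=0$.

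The crux is that this remaining DG-compatibility is exactly the chain-map property of $\psi$: that $\psi$ is an isomorphism of chain complexes $\mathrm{C}^{\bullet}(\mathfrak{g})\to\mathrm{C}_{n-\bullet}(\mathfrak{g})$ translates, under the duality pairing, into the statement that $\langle-,-\rangle$ intertwines $\delta$ with the required sign. By Lemma \ref{PD_unimodular} this holds if and only if $\mathfrak{g}$ is unimodular, which gives the forward implication: if $\mathfrak{g}$ is unimodular then $\mathrm{C}^{\bullet}(\mathfrak{g})$ is cyclic of degree $n$, hence $U(\mathfrak{g})$ is $n$-Calabi-Yau.

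For the converse I would invoke uniqueness of the Frobenius form. If $U(\mathfrak{g})$ is $n$-Calabi-Yau, then Proposition \ref{PDVdB} furnishes some non-degenerate cyclic pairing on $\mathrm{C}^{\bullet}(\mathfrak{g})$ of degree $n$; the degree constraint forces the induced trace functional $\lambda=\langle 1,-\rangle$ to be supported on $\mathrm{C}^n(\mathfrak{g})$, and graded-cyclic invariance together with non-degeneracy then shows that the pairing must be of the form $\langle f,g\rangle=\lambda(f\wedge g)$ for a nonzero $\lambda$, i.e. it agrees up to a nonzero scalar with the cap-product form above. Its DG-compatibility then forces, again through Lemma \ref{PD_unimodular}, the vanishing of $\mathrm{Tr}(\mathrm{ad}_g)$ for all $g$, so $\mathfrak{g}$ is unimodular. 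The main obstacle I anticipate is bookkeeping: matching the homological sign conventions so that the cyclic-invariance and $\delta$-compatibility of $\langle-,-\rangle$ line up with the chain-map condition on $\psi$, and pinning down the uniqueness of the Frobenius form precisely enough that the abstract cyclic pairing provided by the Calabi-Yau hypothesis can be identified with the explicit cap-product pairing.
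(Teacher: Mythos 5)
Your proposal follows the paper's own route: it deduces the theorem by combining Proposition \ref{PDVdB} (the Calabi--Yau property of $U(\mathfrak g)$ is equivalent to the Chevalley--Eilenberg DG algebra being cyclic of degree $n$) with Lemma \ref{PD_unimodular}, which identifies the $\delta$-compatibility of the cap-product pairing with unimodularity. Your explicit identification of the cyclic pairing as the wedge-and-evaluate form and your uniqueness-of-the-Frobenius-form argument for the converse are welcome elaborations of the paper's terse ``and vice versa,'' but the underlying argument is the same.
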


\begin{proof}
That $U(\mathfrak g)$ is homologically smooth follows from the fact that the Koszul
resolution of $U(\mathfrak g)$ is of length $n+1$.

By Proposition \ref{PDVdB} and Lemma \ref{PD_unimodular},
$$\mathrm C^{\bullet} (\mathfrak g)\simeq \mathrm C_{n-{\bullet}}(\mathfrak g)$$
implies
$$\mathrm{RHom}_{U(\mathfrak g)^e}(U(\mathfrak g), U(\mathfrak g)^e)\simeq\Sigma^{-n} U(\mathfrak g)$$
in $\mathcal D(U(\mathfrak g)^e)$, and vice versa.
\end{proof}

In particular, as we mentioned in \S1,
the space of polynomials $\k[x_1,x_2,\cdots,x_n]$ is $n$-Calabi-Yau.

\section{Proof of the main theorem}\label{Sect_Proof}

\subsection{Batalin-Vilkovisky algebras}

\begin{definition}[Batalin-Vilkovisky algebra]
Let $(V, {\hdot})$ be a graded commutative algebra.
A {\it Batalin-Vilkovisky operator} on $V$
is a degree $-1$ differential operator
$$
\Delta:V_{{\bullet}}\to V_{{\bullet}-1}
$$
such that the {\it deviation from being a derivation}
\begin{equation}\label{deviation}
[a, b]:=(-1)^{|a|+1}(\Delta(a{\bullet} b)-\Delta(a){\bullet} b-(-1)^{|a|}a{\bullet}\Delta(b)),\quad\mbox{for all}\,\, a, b\in V
\end{equation}
is a derivation for each component, {\it i.e.}
$$
[a, b{\bullet} c]=[a,b]{\bullet} c+(-1)^{|b|(|a|-1)}b{\bullet} [a, c],\quad\mbox{for all}\,\, a, b,c\in V.
$$
The triple $(V, {\bullet}, \Delta)$ is called a {\it Batalin-Vilkoviksy algebra}.
\end{definition}

For a graded associative algebra $V$, an linear operator $\Delta:V\to V$
(not necessarily a differential) satisfying
(\ref{deviation}) is said to be {\it of second order}.
Suppose $(V, {\hdot}, \Delta)$ is a Batalin-Vilkoviksy algebra.
Then by definition, $(V, {\hdot}, [-,-])$, where $[-,-]$ is given by (\ref{deviation}),
is a Gerstenhaber algebra. In other words, a Batalin-Vilkovisky algebra is a special
class of Gerstenhaber algebras.
Also $\Delta$ being of second order means
\begin{eqnarray}\label{second_order}
\Delta(a{\hdot} b{\hdot} c)&=&
\Delta(a{\hdot} b){\hdot} c+(-1)^{|b|\cdot |c|}\Delta(a{\hdot} c){\hdot} b
+(-1)^{|a|}a{\hdot}\Delta(b{\hdot} c)\nonumber\\
&&
-\Delta(a){\hdot} b{\hdot} c-(-1)^{|a|}a{\hdot}\Delta(b){\hdot} c-(-1)^{|a|+|b|}a{\hdot} b{\hdot} \Delta(c).
\end{eqnarray}
The reader may also refer to Getzler \cite{Getzler} for more details.

The following Theorems \ref{BV_CY} and \ref{BV_Symm} are due to Ginzburg and
Tradler respectively. Since each theorem has at least two proofs appeared in literature,
we will only sketch them in the following, just for reader's convenience.

\begin{theorem}[Ginzburg \cite{Ginzburg}, Theorem 3.4.3]\label{BV_CY}
Suppose that $A$ is an $n$-Calabi-Yau algebra.
Then the Hochschild cohomology $\mathrm{HH}^{\bullet}(A;A)$ has a Batalin-Vilkovisky algebra structure.
\end{theorem}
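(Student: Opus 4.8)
The plan is to transport the Connes operator from the homology side to the cohomology side by means of Van den Bergh's Poincar\'e duality. Since $A$ is $n$-Calabi-Yau, we have at our disposal the isomorphism $\mathrm{PD}\colon \mathrm{HH}^i(A;A)\stackrel{\simeq}{\longrightarrow}\mathrm{HH}_{n-i}(A)$, and the Connes operator raises homological degree by one, $B\colon \mathrm{HH}_{n-i}(A)\to\mathrm{HH}_{n-i+1}(A)$. I would therefore \emph{define} the candidate Batalin--Vilkovisky operator by
$$
\Delta:=\mathrm{PD}^{-1}\circ B\circ\mathrm{PD}\colon\ \mathrm{HH}^i(A;A)\longrightarrow\mathrm{HH}^{i-1}(A;A),
$$
which is of degree $-1$ by construction. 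Two of the defining properties come for free: one has $\Delta^2=\mathrm{PD}^{-1}B^2\,\mathrm{PD}=0$ because $B^2=0$, so $\Delta$ is a differential; and the underlying graded-commutative product, namely the cup product, together with the Gerstenhaber bracket, is already in place by Theorem~\ref{G-algebra}.

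The real content is to verify that $\Delta$ is \emph{of second order}, with deviation (\ref{deviation}) from being a derivation equal to the Gerstenhaber bracket. The appropriate setting is the Tamarkin--Tsygan noncommutative differential calculus. The cap product makes $\mathrm{HH}_\bullet(A)$ a module over the algebra $\big(\mathrm{HH}^\bullet(A;A),\cup\big)$, and Van den Bergh's duality says precisely that this module is free of rank one, generated by a fundamental class $\omega\in\mathrm{HH}_n(A)$, so that $\mathrm{PD}(f)=\iota_f(\omega)$ is the contraction of $\omega$ by $f$. Under this identification I would invoke the homotopy formulas of the calculus relating the contraction $\iota_f$, the Connes operator $B$, and the Lie derivative $L_f$: the Cartan--Rinehart formula $L_f=[B,\iota_f]$ (up to conventional signs), together with the companion identity $[L_f,\iota_g]=\pm\,\iota_{\{f,g\}}$ expressing the bracket through these operators. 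Assuming the fundamental class is a Connes cycle, $B\omega=0$, pushing these relations through $\mathrm{PD}$ should turn the deviation $\Delta(f\cup g)-\Delta(f)\cup g-(-1)^{|f|}f\cup\Delta(g)$ into $\{f,g\}$ up to the overall sign in (\ref{deviation}).

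The hard part will be exactly this last step: establishing the Cartan-type homotopy at the chain level and then carefully matching all the Koszul signs so that the transported deviation coincides on the nose with the Gerstenhaber bracket. Equivalently, one must check that the $S^1$-action encoded by $B$ is compatible with the duality, so that Getzler's principle---a cyclic (equivalently BV) structure on a calculus yields a BV operator on its cohomology part---applies. I expect the homotopy- and sign-bookkeeping to be the only genuinely delicate point; once the Cartan formula is in hand and transported along $\mathrm{PD}$, the second-order relation, and hence the Batalin--Vilkovisky algebra structure, follows formally.
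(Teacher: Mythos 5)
Your proposal follows exactly the route the paper takes: it records that $(\mathrm{HH}^{\bullet}(A;A),\cup)$ is graded commutative, defines $\Delta:=\mathrm{PD}^{-1}\circ B\circ\mathrm{PD}$ via Van den Bergh's duality, and reduces everything to checking that $\Delta$ is of second order, which the paper (like you) attributes to the Tamarkin--Tsygan calculus and defers to Ginzburg's original argument and to Lambre. Your additional remarks on the Cartan--Rinehart homotopy $L_f=[B,\iota_f]$ and the identification $\mathrm{PD}(f)=\iota_f(\omega)$ are a faithful expansion of that deferred step, so the approach is essentially identical.
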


\begin{proof}[Sketch of proof]
The proof is a combination of the following three facts:
\begin{enumerate}
\item[$(1)$]
$\mathrm{HH}^{\bullet}(A; A)$ together with the Gerstenhaber cup product $\cup$
is a graded commutative algebra;
\item[$(2)$]
Via the noncommutative Poincar\'e duality
$\mathrm{PD}:\mathrm{HH}^{{\bullet}}(A; A)\stackrel{\simeq}\longrightarrow\mathrm{HH}_{n-{\bullet}}(A )$,
define a differential operator
$$\Delta:\mathrm{HH}^{{\bullet}}(A; A)
\longrightarrow\mathrm{HH}^{{\bullet}-1}(A;A)$$
by letting $\Delta:=\mathrm{PD}^{-1}\circ B\circ \mathrm{PD}$.
\item[$(3)$] $\Delta$ is a second order operator with respect to the Gerstenhaber cup product.
\end{enumerate}
In summary, $(\mathrm{HH}^{\bullet}(A;A),\cup,\Delta)$ is a Batalin-Vilkovisky algebra.
\end{proof}

The proof of Ginzburg uses the {\it Tamarkin-Tsygan calculus}, and we recommend the reader
to the original paper for more details; see also Lambre \cite{Lambre}.

\subsection{Hochschild (co)homology of cyclic algebras}
For cyclic algebras, we also have a version of Poincar\'e duality,
due to Tradler \cite{Tradler}:

\begin{lemma} \label{PD_Cyclic}
Let $A^!$ be a cyclic (not necessarily Koszul) algebra of degree $n$.
Denote by $A^{\ac}:=\mathrm{Hom}_{\k}(A,\k)$ its dual coalgebra.
Then there is an isomorphism
$$\mathrm{PD}:\mathrm{HH}^{{\bullet}}(A^!;A^! )
\stackrel{\simeq}\longrightarrow\mathrm{HH}_{n-{\bullet}}(A^{\ac}).$$
\end{lemma}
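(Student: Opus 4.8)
The plan is to realise Tradler's duality as the composite of two isomorphisms: the cyclic pairing first turns $A^!$ into its dual bimodule $A^{\ac}$, and then bar--cobar duality turns the Hochschild \emph{co}chain complex of $A^!$ with these dual coefficients into the Hochschild \emph{chain} complex of the coalgebra $A^{\ac}$.

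First I would convert the pairing into a bimodule isomorphism. Write $\langle-,-\rangle\colon A^!\times A^!\to\k(n)$ for the nondegenerate cyclic pairing, and equip $A^{\ac}=\mathrm{Hom}_\k(A^!,\k)$ with its dual $A^!$-bimodule structure $(x\cdot f\cdot y)(b)=f(ybx)$ (signs inserted by the Koszul rule in the graded case). The assignment $\theta\colon a\mapsto\langle a,-\rangle$ is then a map $A^!\to A^{\ac}$ which is homogeneous of degree $n$, so by nondegeneracy it is an isomorphism $\theta\colon A^!\simeq A^{\ac}(n)$ of graded vector spaces; the cyclicity relation $\langle a\cdot b,c\rangle=(-1)^{(|a|+|b|)|c|}\langle c\cdot a,b\rangle$ is precisely what upgrades $\theta$ to an isomorphism of $A^!$-bimodules, and in the differential graded case the condition $\langle d(a),b\rangle+(-1)^{|a|}\langle a,d(b)\rangle=0$ makes $\theta$ a chain map. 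This is the same mechanism appearing in Proposition~\ref{PDVdB}, but here it is the hypothesis rather than a consequence of the Calabi--Yau property.

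Next I would feed $\theta$ into Hochschild cohomology. Using the identification $\ol{\mathrm{CH}}^\bullet(A^!;A^!)\simeq\mathrm{Hom}^\pi(\mathrm B(A^!),A^!)$ recalled after Proposition~\ref{Interpret_HH}, together with functoriality of the Hochschild complex in the coefficient bimodule, $\theta$ yields $\ol{\mathrm{CH}}^\bullet(A^!;A^!)\simeq\mathrm{Hom}^\pi(\mathrm B(A^!),A^{\ac})$, up to the internal shift by $n$. Since $A^!$ is locally finite, $A^{\ac}=(A^!)^\vee$, and I may apply the natural isomorphism (\ref{TransferingHomToTensorProduct}) followed by the bar--cobar duality $\mathrm B(A^!)^\vee\simeq\Omega(A^{\ac})$ (cf.\ (\ref{DualityBetweenBarCobarConstructionForKoszulAlgebra})):
\[
\mathrm{Hom}(\mathrm B(A^!),A^{\ac})\ \simeq\ A^{\ac}\ot\mathrm B(A^!)^\vee\ \simeq\ A^{\ac}\ot\Omega(A^{\ac})\ \simeq\ \Omega(A^{\ac})\ot A^{\ac}=\ol{\mathrm{CH}}_\bullet(A^{\ac}),
\]
the Hochschild chain complex of the coalgebra $A^{\ac}$ of Definition~\ref{Hoch_coalg}. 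One then checks that under this chain of identifications the twisted convolution differential, i.e.\ the Hochschild coboundary (\ref{Hoch_diff}), is carried exactly to the coalgebra Hochschild differential $b$ of (\ref{Hochbcopx_coalg}); passing to (co)homology gives the desired $\mathrm{PD}$.

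The step I expect to cost the most is the bookkeeping of degrees. Two shifts must be reconciled: the cohomological-to-homological reversal produced by dualising $A^!\ot\mathrm B(A^!)$, and the internal degree shift by $n$ coming from the pairing valued in $\k(n)$; together they should turn the cohomological index $\bullet$ into the homological index $n-\bullet$. This is exactly the grading subtlety stressed throughout \S\ref{Sect_HK}, and I would carry it out by tracking the full (degree, weight) bigrading rather than the single classical grading. The only other substantive verifications are that $\theta$ is a bimodule (and, in the DG case, chain) map --- which is where cyclicity and the Koszul signs are used --- and that the dualisation intertwines (\ref{Hoch_diff}) with (\ref{Hochbcopx_coalg}); everything else reduces to local finiteness, the duality (\ref{TransferingHomToTensorProduct}), and functoriality of $\mathrm{Ext}$ in the coefficient bimodule.
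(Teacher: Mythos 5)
Your proposal is correct and follows essentially the same route as the paper: both realise $\mathrm{PD}$ by combining the $A^!$-bimodule isomorphism $A^!\simeq\Sigma^{-n}A^{\ac}$ induced by the cyclic pairing with the chain of identifications $\ol{\mathrm{CH}}^{\bullet}(A^!;A^!)\simeq\mathrm{Hom}^{\pi'}(\mathrm B(A^!),A^!)\simeq A^!\ot^{\pi'}\Omega(A^{\ac})\simeq\Omega(A^{\ac})\ot^{\iota}A^!$ coming from local finiteness and the duality $\mathrm B(A^!)^\vee\simeq\Omega(A^{\ac})$, and both reduce the compatibility of differentials to the fact that the pairing map is a morphism of $A^!$-bimodules (the paper's identity (\ref{HCohdiff55})). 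The only cosmetic difference is that you change coefficients first and dualise afterwards, whereas the paper dualises first and applies $\mathrm{Id}\ot\psi$ at the last step.
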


\begin{proof}First, we have an isomorphism of vector spaces
\begin{eqnarray}
\overline
{\mathrm{CH}}^{\bullet}(A^!;A^!)
&\simeq&\mathrm{Hom}^{\pi'}(\mathrm B(A^!), A^!)
\stackrel{(\ref{TransferingHomToTensorProduct})}{\simeq}A^!\ot^{\pi'} \mathrm{B}(A^!)^\vee\nonumber\\
&\stackrel{(\ref{DualityBetweenBarCobarConstructionForKoszulAlgebra})}{\simeq}&
A^!\ot^{\pi'}\Omega(A^\ac)
\stackrel{(\ref{DualTwistingMorphismsInducingIsomorphism})}{\simeq}
 \Omega(A^{\ac})\ot^{\iota} A^!
\nonumber\\
&\stackrel{(\ref{KD})}{\simeq} &\Sigma^{-n}( \Omega(A^{\ac})\ot A^{\ac}, b)
=\Sigma^{-n}\overline{\mathrm{CH}}_{\bullet}(A^{\ac}).\label{Iso_HochHoch}
\end{eqnarray}
We next show that
\begin{equation}\label{IsomorphismNeededInPDCyclic}\Omega(A^{\ac})\ot^{\iota}
A^!\stackrel{(\ref{KD})}{\simeq}\Sigma^{-n}( \Omega(A^{\ac})\ot A^{\ac} \end{equation}
 is an isomorphism of chain complexes.
Choose a basis $\{e_i\}$ for $ A^{\ac}$, and denote its dual by $\{e^i\}$.
The the differential on $\overline{\mathrm{CH}}^{\bullet}(A^!; A^!)\simeq\mathrm
\Omega(A^{\ac})\otimes^{\iota} A^!$
is given as follows:
for $ (a_1,\cdots, a_n , x)\in \mathrm \Omega(A^{\ac})\otimes A^!$,
\begin{eqnarray}
\delta( a_1,\cdots, a_n , x)&=&
(d(a_1,\cdots, a_n), x)
+\sum_i (-1)^{|a_1|+\cdots+|a_n|+|e_i|}\Big( a_1,\cdots, a_n, \overline e_i ,x\cdot e^i\Big)\nonumber\\
&&+\sum_i (-1)^{(|a_1|+\cdots+|a_n|)|e_i|}\Big(\overline e_i,a_1,\cdots, a_n , e^i\cdot x\Big).\label{HCohdiff}
\end{eqnarray}
where $d$ is the differential on the cobar construction, and $\overline e_i$
is the image of the projection $A^{\ac}\to\overline A^{\ac}=A^{\ac}/\k$.
Let $\psi: A^{!}\to \Sigma^{-n}A^{\ac}$ be the isomorphism (\ref{KD}),
then under (\ref{Iso_HochHoch}) the right hand of (\ref{HCohdiff}) is mapped
to
\begin{eqnarray}
&&(d(a_1,\cdots, a_n),\psi(x))
+\sum_i (-1)^{|a_1|+\cdots+|a_n|+|e_i|}\Big( a_1,\cdots, a_n, \overline e_i ,\psi(x\cdot e^i)\Big)\nonumber\\
&&+\sum_i (-1)^{(|a_1|+\cdots+|a_n|)|e_i|}\Big(\overline e_i,a_1,\cdots, a_n , \psi(e^i\cdot x)\Big).\label{HCohdiff1}
\end{eqnarray}
On the other hand, the Hochschild boundary on
$( a_1,\cdots, a_n , \psi(v))\in\overline{\mathrm{CH}}_{\bullet}(A^{\ac})$ is
\begin{eqnarray}
&&b( a_1,\cdots, a_n , \psi(x))\nonumber\\
&=&
(d(a_1,\cdots, a_n),\psi(x))
+\sum_{(\psi(x))} (-1)^{|a_1|+\cdots+|a_n|+|\psi(x)'|}
\Big( a_1,\cdots, a_n, \overline{\psi(x)}' ,\psi(x)''\Big)\nonumber\\
&&+\sum_{(\psi(x))} (-1)^{(|a_1|+\cdots+|a_n|)|\psi(x)''|}
\Big( \overline{\psi(x)}'',a_1,\cdots, a_n , \psi(x)'\Big).\label{HCohdiff2}
\end{eqnarray}
Therefore, to show (\ref{HCohdiff1}) equals the left hand of (\ref{HCohdiff2}),
it suffices to show that in $A^{\ac}\otimes A^{\ac}$,
\begin{equation}\label{HCohdiff55}
\sum_i e_i\otimes \psi(x e^i)=\sum_{(\psi(x))}\psi (x)'\otimes \psi(x)'',\quad
\sum_i e_i\otimes\psi(e^i x)=\sum_{(\psi(x))}\psi (x)''\otimes\psi (x)'.
\end{equation}
Pick two arbitrary basis $e^j, e^k\in A^!$, the evaluation
\begin{eqnarray}
\Big(\sum_i e_i\otimes \psi(x e^i)\Big)(e^j, e^k)&=&\sum_i \delta_i^j\cdot \psi(xe^i)(e^k)\nonumber\\
&=&\psi(xe^j)(e^k)\nonumber\\
&=&\psi(x)(e^j\cdot e^k),\label{HCohdiff3}
\end{eqnarray}
where the last equality holds since $\psi$ is a map of $A^!$-bimodules.
On the other hand,
\begin{eqnarray}
\Big(\sum_{(\psi(x))} \psi(x)'\otimes\psi(x)''\Big)(e^j, e^k)&=&\psi(x)(e^j\cdot e^k)\label{HCohdiff4}
\end{eqnarray}
automatically.
Comparing (\ref{HCohdiff3}) and (\ref{HCohdiff4}),
we obtain
the first equality of (\ref{HCohdiff55}). The second equality is proved similarly.
This completes the proof.
\end{proof}

\begin{theorem}[Tradler \cite{Tradler}, Theorem 1]\label{BV_Symm}
Let $A^!$ be a cyclic algebra and let $A^{\ac}$ be its dual coalgebra.
Then the Hochschild cohomology
$\mathrm{HH}^{\bullet}(A^!;A^!)$ has a Batalin-Vilkovisky algebra structure.
\end{theorem}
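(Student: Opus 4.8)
The plan is to mirror the three-step sketch we gave for Ginzburg's Theorem~\ref{BV_CY}, with Van den Bergh's Poincar\'e duality replaced by Tradler's duality, which is exactly Lemma~\ref{PD_Cyclic}. The three ingredients to assemble are: a graded commutative product, a degree $-1$ square-zero operator, and the second-order compatibility between them.

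First I would invoke Theorem~\ref{G-algebra} to record that $(\mathrm{HH}^{\bullet}(A^!;A^!), \cup)$ is a graded commutative algebra (in fact a Gerstenhaber algebra). This provides the commutative multiplication underlying the prospective Batalin-Vilkovisky structure. Next, using the isomorphism
$$\mathrm{PD}: \mathrm{HH}^{\bullet}(A^!;A^!) \stackrel{\simeq}{\longrightarrow} \mathrm{HH}_{n-\bullet}(A^{\ac})$$
of Lemma~\ref{PD_Cyclic} together with the Connes operator $B$ on $\mathrm{HH}_{\bullet}(A^{\ac})$ from Definition~\ref{Hoch_coalg}, I would define
$$\Delta := \mathrm{PD}^{-1}\circ B\circ \mathrm{PD}: \mathrm{HH}^{\bullet}(A^!;A^!)\longrightarrow \mathrm{HH}^{\bullet-1}(A^!;A^!).$$
Since $B^2=0$ and $\mathrm{PD}$ is an isomorphism, $\Delta$ is automatically a degree $-1$ differential operator, i.e.\ $\Delta^2=0$.

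The crux is the third step: to show that $\Delta$ is a \emph{second-order} operator with respect to $\cup$, that is, that the deviation $[a,b]$ defined in (\ref{deviation}) is a biderivation, equivalently that the six-term identity (\ref{second_order}) holds. This is precisely the content of Tradler's original theorem: the cyclic invariance of the pairing on $A^!$, namely $\langle a\cdot b, c\rangle = (-1)^{(|a|+|b|)|c|}\langle c\cdot a, b\rangle$, is exactly what forces the operator transported from $B$ to satisfy the second-order identity. I expect this verification, rather than the formal packaging, to be the main obstacle: one must track how the cup product on the cohomology side corresponds under $\mathrm{PD}$ to the cyclic and coproduct structure on $\mathrm{HH}_{\bullet}(A^{\ac})$, and then unwind how $B$ interacts with that transported product. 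Since a complete, sign-by-sign verification already appears in Tradler \cite{Tradler}, I would only indicate this mechanism and refer to \cite{Tradler} for the detailed bookkeeping, just as we did for Ginzburg's theorem. Together, steps one through three show that $(\mathrm{HH}^{\bullet}(A^!;A^!), \cup, \Delta)$ is a Batalin-Vilkovisky algebra.
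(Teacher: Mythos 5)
Your proposal matches the paper's own (sketch of) proof essentially verbatim: the same three ingredients --- the cup product from Theorem~\ref{G-algebra}, the operator $\Delta:=\mathrm{PD}^{-1}\circ B\circ\mathrm{PD}$ transported through the duality of Lemma~\ref{PD_Cyclic}, and the second-order property deferred to Tradler's original verification. The paper likewise only sketches these steps and refers to \cite{Tradler} (and to Menichi and Abbaspour) for the detailed check, so there is nothing to add.
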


\begin{proof}[Sketch of proof]
The proof is also a combination of the following three facts:
\begin{enumerate}
\item[$(1)$]
$\mathrm{HH}^i(A^!;A^!)$ together with the Gerstenhaber cup product
is a graded commutative algebra;
\item[$(2)$]
Via the isomorphism
$\mathrm{PD}:\mathrm{HH}^{{\bullet}}(A^!; A^!)\stackrel{\simeq}\longrightarrow\mathrm{HH}_{n-{\bullet}}(A^{\ac})$,
we may define a differential operator
$$\Delta:\mathrm{HH}^{{\bullet}}(A^!; A^!)
\longrightarrow\mathrm{HH}^{{\bullet}-1}(A^!; A^!)$$
by letting $\Delta:=\mathrm{PD}^{-1}\circ \mathrm{B}\circ \mathrm{PD}$.
\item[$(3)$] $\Delta$ is a second order operator with respect to the Gerstenhaber cup product.
\end{enumerate}
In summary, $(\mathrm{HH}^{\bullet}(A^!;A^!),\cup,\Delta)$ is a Batalin-Vilkovisky algebra.
\end{proof}

The reader may also refer to Menichi \cite{Menichi} and Abbaspour \cite{Abbaspour}
for different proofs of this theorem.

\subsection{The main theorem}

Now we reach the main theorem of the current paper:

\begin{theorem}[Theorem \ref{Main Thm}]
Let $A$ be a Koszul $n$-Calabi-Yau algebra,
and let $A^!$ be its Koszul dual algebra.
Then there is an isomorphism
$$
\mathrm{HH}^{\bullet\bullet}(A; A)\simeq\mathrm{HH}^{\bullet\bullet}(A^{!}; A^!)
$$
of Batalin-Vilkovisky algebras.
\end{theorem}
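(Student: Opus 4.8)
The plan is to combine three facts already in hand. First, Theorem~\ref{G-alg_pers} provides a bigraded algebra isomorphism $\Phi: \mathrm{HH}^{\bullet\bullet}(A;A)\stackrel{\simeq}{\longrightarrow}\mathrm{HH}^{\bullet\bullet}(A^!;A^!)$. Second, by Theorems~\ref{BV_CY} and~\ref{BV_Symm} the two Batalin--Vilkovisky operators are
$$
\Delta_A=\mathrm{PD}_A^{-1}\circ B\circ\mathrm{PD}_A,
\qquad
\Delta_{A^!}=\mathrm{PD}_{A^!}^{-1}\circ B\circ\mathrm{PD}_{A^!},
$$
where $\mathrm{PD}_A: \mathrm{HH}^{\bullet}(A;A)\stackrel{\simeq}{\longrightarrow}\mathrm{HH}_{n-\bullet}(A)$ is Van den Bergh's duality and $\mathrm{PD}_{A^!}: \mathrm{HH}^{\bullet}(A^!;A^!)\stackrel{\simeq}{\longrightarrow}\mathrm{HH}_{n-\bullet}(A^{\ac})$ is Tradler's duality (Lemma~\ref{PD_Cyclic}). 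Third, Theorem~\ref{can_iso} supplies a canonical bigraded isomorphism $\Psi: \mathrm{HH}_{n-\bullet}(A)\stackrel{\simeq}{\longrightarrow}\mathrm{HH}_{n-\bullet}(A^{\ac})$ commuting with the Connes operator $B$. Since $\Phi$ is already an isomorphism of graded-commutative algebras, and since in a Batalin--Vilkovisky algebra the Gerstenhaber bracket is recovered as the deviation~(\ref{deviation}) of $\Delta$ from being a derivation of the product, it suffices to show that $\Phi$ intertwines $\Delta_A$ and $\Delta_{A^!}$.

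First I would reduce this to the commutativity of the square
$$
\xymatrix{
\mathrm{HH}^{\bullet}(A;A)\ar[r]^-{\Phi}\ar[d]_-{\mathrm{PD}_A}&\mathrm{HH}^{\bullet}(A^!;A^!)\ar[d]^-{\mathrm{PD}_{A^!}}\\
\mathrm{HH}_{n-\bullet}(A)\ar[r]^-{\Psi}&\mathrm{HH}_{n-\bullet}(A^{\ac}),
}
$$
that is, to the identity $\mathrm{PD}_{A^!}\circ\Phi=\Psi\circ\mathrm{PD}_A$. Granting it, the intertwining is purely formal, using $\Psi\circ B=B\circ\Psi$:
\begin{align*}
\Phi\circ\Delta_A
&=\mathrm{PD}_{A^!}^{-1}\circ\Psi\circ\mathrm{PD}_A\circ\Delta_A
=\mathrm{PD}_{A^!}^{-1}\circ\Psi\circ B\circ\mathrm{PD}_A\\
&=\mathrm{PD}_{A^!}^{-1}\circ B\circ\Psi\circ\mathrm{PD}_A
=\mathrm{PD}_{A^!}^{-1}\circ B\circ\mathrm{PD}_{A^!}\circ\Phi
=\Delta_{A^!}\circ\Phi.
\end{align*}
Hence $\Phi$ would be an isomorphism of Batalin--Vilkovisky algebras, as desired.

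The heart of the matter is therefore the commutativity of that square, which I would establish on the level of complexes. The decisive point is that \emph{both} Poincar\'e dualities are induced by one and the same $A^!$-bimodule isomorphism $\psi: A^!_{-\bullet}\simeq A^{\ac}_{\bullet-n}(n)$ of~(\ref{KD}): the map $\mathrm{PD}_A$ is the identification~(\ref{IsomorphismOfComplexesInducedByKD}) of $(A\otimes^\kappa A^!,\delta)$ with $\Sigma^{-n}(A\otimes A^{\ac},b)=\Sigma^{-n}K(A)$, and $\mathrm{PD}_{A^!}$ is the identification~(\ref{Iso_HochHoch}) of $\Omega(A^{\ac})\otimes^\iota A^!$ with $\Sigma^{-n}(\Omega(A^{\ac})\otimes A^{\ac},b)=\Sigma^{-n}\overline{\mathrm{CH}}_\bullet(A^{\ac})$, in both cases by letting $\psi$ act on the $A^!$-factor. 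On cohomology $\Phi$ is realized by the canonical identifications~(\ref{IsomorphismsHochschildCohomologyKoszul}), built from the twisting-morphism isomorphisms~(\ref{InducedMorphismBetweenNewTwistedTensorProduct}) and~(\ref{DualTwistingMorphismsInducingIsomorphism}) together with the bar/cobar comparison of Proposition~\ref{CAN}; on homology the two target complexes $K(A)$ and $\overline{\mathrm{CH}}_\bullet(A^{\ac})$ are related precisely by the comparison quasi-isomorphism $\phi_2=p\otimes\mathrm{Id}$ of Lemma~\ref{Main_Lemma2}, which realizes $\Psi$. The plan is to paste these chain-level maps into one diagram in which the factors carrying the (co)algebra data $A\leftrightarrow\Omega(A^{\ac})$ are matched by $p$ and the remaining factor by the single map $\psi$; since $p$ and $\psi$ act on independent tensor factors, every elementary square commutes and the commutativity of the original square is obtained after passing to homology.

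The hard part will be the sign-and-position bookkeeping needed to see that these elementary squares genuinely commute. One must track the repositioning of the two copies of $A$ (the passage from the inner to the outer $A^e$-module structure made in the third isomorphism of~(\ref{Ko_Res})), the degree shift $\Sigma^{-n}$, and the Koszul signs produced along the way, and one must reconcile the slightly different presentations of $\mathrm{HH}^{\bullet}(A^!;A^!)$ used in~(\ref{IsomorphismsHochschildCohomologyKoszul}) and in Tradler's complex~(\ref{Iso_HochHoch}). The property of $\psi$ that makes all of this go through is that it is an isomorphism of $A^!$-\emph{bimodules}: this was verified in the proof of Proposition~\ref{PDVdB} and is exactly what was used again in the closing identities~(\ref{HCohdiff55}) of Lemma~\ref{PD_Cyclic}, guaranteeing that $\mathrm{Id}\otimes\psi\otimes\mathrm{Id}$ (resp.\ $\mathrm{Id}\otimes\psi$) is a chain map and commutes with the bar/cobar comparison maps. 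Once the diagram is assembled and passed to homology, invoking the Connes-compatibility of $\Psi$ from Theorem~\ref{can_iso} through the formal computation above completes the proof.
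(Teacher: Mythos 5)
Your proposal is correct and follows essentially the same route as the paper: the paper's proof assembles exactly the chain-level commutative diagram you describe, with the top row realizing the algebra isomorphism of Theorem~\ref{G-alg_pers}, the bottom row realizing the $B$-compatible isomorphism of Theorem~\ref{can_iso}, and all vertical Poincar\'e-duality maps induced by the single $A^!$-bimodule isomorphism $\psi$ of (\ref{KD}) via (\ref{IsomorphismOfComplexesInducedByKD}) and (\ref{Iso_HochHoch}). The formal intertwining computation you make explicit is left implicit in the paper, but the substance is identical.
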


\begin{proof}
Since $A$ is Koszul Calabi-Yau, we have the following commutative diagram
$$
\xymatrixcolsep{3.5pc}
\xymatrix{
\mathrm{CH}^{\bullet\bullet}(A; A)\ar[d]_-{\mathrm{Thm}.\;\ref{G-alg_pers}}^\simeq\ar[r]^-{\simeq}
&A\ot^\kappa A^!  \ar[r]^\simeq\ar[d]_{(\ref{IsomorphismOfComplexesInducedByKD})}^{\simeq}
&\Omega(A^\ac)\ot^{\iota}A^!\ar[d]^-{(\ref{IsomorphismNeededInPDCyclic})}_{\simeq}
&\mathrm{CH}^{\bullet\bullet}(A^!; A^!)\ar[l]_-{\simeq}\ar[d]^-{\mathrm{Lem}.\;\ref{PD_Cyclic}}_\simeq\\
\mathrm{CH}_{n-\bullet, n-\bullet} (A)\ar[r]^-{(\ref{HHo_Kos})}_\simeq&
\Sigma^{-n}(A\otimes A^{\ac}, b)\ar[r]^-{\simeq}&\Sigma^{-n}(\Omega(A^\ac)\otimes A^{\ac}, b)&
\mathrm{CH}_{n-\bullet, n-\bullet}(A^{\ac})\ar[l]_\simeq
}
$$
of chain complexes. In fact,
  the top line induces on Hochschild cohomology the isomorphisms of graded commutative algebras
(Theorem \ref{G-alg_pers}),
 the bottom line induces on Hochschild homology the isomorphisms of graded vector spaces which commutes
with $B$ (Theorem \ref{can_iso}), the leftmost square appeared in the proof of
Theorem~\ref{PDVdB} and the rightmost square in the proof of Lemma \ref{PD_Cyclic},
and the middle square is obviously commutative.
Combining with Theorems \ref{BV_CY} and \ref{BV_Symm}
the conclusion follows.
\end{proof}

\subsection{Remark}
In this paper we have only considered Koszul Calabi-Yau algebras. It is very likely that our arguments hold
for $N$-Koszul Calabi-Yau algebras in the sense of Berger (\cite{Berger}),
or more generally,
exact complete Calabi-Yau algebras in the sense of Van den Bergh (\cite{VdB3}).


\section{An application to cyclic homology}\label{Sect_App}

In mathematical literature, Batalin-Vilkovisky algebras are always related to deformation theory (the
Tian-Todorov Lemma).
However, the Batalin-Vilkovisky algebras on both sides of Theorem \ref{Main Thm} are not directly
(but indirectly) related
to the deformations of $A$ or $A^!$.
Indeed, due to the work of de Thanhoffer de V\"{o}lcsey and
Van den Bergh \cite{VdBdTdV12},
the deformations of $A$ are controlled by a DG Lie algebra whose homology is
the negative cyclic
homology $\mathrm{HC}_{\bullet}^{-}(A)$,
while the deformations of the cyclic algebra $A^{!}$
are controlled by the cyclic cohomology $\mathrm{HC}^{\bullet}(A^{!})$,
which is a work of Penkava and Schwarz \cite{Penkava}.
The theorem below, which is essentially a corollary of Theorem \ref{Main Thm},
gives an isomorphism of Lie algebras on these to cyclic (co)homology groups:

\begin{theorem}\label{Main_Thm2}
Let $A$ be a Koszul $n$-Calabi-Yau algebra, and let $A^!$ be its Koszul dual algebra.
Then there is an isomorphism
$$
\mathrm{HC}_{\bullet}^{-}(A)\simeq\mathrm{HC}^{-{\bullet}}(A^{!} )
$$
of Lie algebras between the negative cyclic homology of $A$ and the cyclic cohomology of $A^{!}$.
\end{theorem}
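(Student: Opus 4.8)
The plan is to deduce this statement from the main theorem (Theorem \ref{Main Thm}) by recognizing both Lie brackets as the brackets attached to the two Batalin-Vilkovisky algebras that Theorem \ref{Main Thm} identifies. First I would produce the underlying isomorphism of graded vector spaces. Lemma \ref{Main_Lemma1} provides a zig-zag of quasi-isomorphisms of \emph{mixed} complexes between $\overline{\mathrm{CH}}_\bullet(A)$ and $\overline{\mathrm{CH}}_\bullet(A^\ac)$ (through $\overline{\mathrm{CH}}_\bullet(\Omega(A^\ac))$). Since negative cyclic homology is invariant under quasi-isomorphisms of mixed complexes, this gives $\mathrm{HC}^-_\bullet(A)\simeq\mathrm{HC}^-_\bullet(A^\ac)$. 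Because $A^!=(A^\ac)^\vee$ is the graded dual of the \emph{locally finite} coalgebra $A^\ac$, everything splits over the weight grading into finite-dimensional pieces, so Proposition \ref{Alg_Coalg} applies and yields $\mathrm{HC}^-_\bullet(A^\ac)\simeq\mathrm{HC}^{-\bullet}(A^!)$. Composing furnishes an isomorphism $\Phi\colon\mathrm{HC}^-_\bullet(A)\simeq\mathrm{HC}^{-\bullet}(A^!)$ of graded vector spaces.

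Next I would recall how each Lie bracket arises. On the Calabi-Yau side, by de Thanhoffer de V\"olcsey--Van den Bergh \cite{VdBdTdV12} the bracket on $\mathrm{HC}^-_\bullet(A)$ is the one induced from Ginzburg's Batalin-Vilkovisky algebra $\mathrm{HH}^\bullet(A;A)$ (Theorem \ref{BV_CY}): concretely, it is obtained by transporting the Gerstenhaber bracket through Van den Bergh's Poincar\'e duality $\mathrm{PD}\colon\mathrm{HH}^\bullet(A;A)\simeq\mathrm{HH}_{n-\bullet}(A)$ and lifting along the Connes exact sequence by means of the operator $B$. On the cyclic side, by Penkava--Schwarz \cite{Penkava} the bracket on $\mathrm{HC}^\bullet(A^!)$ is induced in exactly the same manner from Tradler's Batalin-Vilkovisky algebra $\mathrm{HH}^\bullet(A^!;A^!)$ (Theorem \ref{BV_Symm}), now using Tradler's duality $\mathrm{PD}\colon\mathrm{HH}^\bullet(A^!;A^!)\simeq\mathrm{HH}_{n-\bullet}(A^\ac)$ (Lemma \ref{PD_Cyclic}) together with the Connes operator on $\overline{\mathrm{CH}}_\bullet(A^\ac)$.

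Finally I would invoke the main theorem. The commutative diagram establishing Theorem \ref{Main Thm} shows that the isomorphism of Gerstenhaber (indeed Batalin-Vilkovisky) algebras of Theorem \ref{G-alg_pers} intertwines the two Poincar\'e dualities and, through Theorem \ref{can_iso}, the two Connes operators, and that it does so precisely via the identification of mixed complexes used in Step 1. Since each Lie bracket is built functorially out of exactly these data---the Gerstenhaber bracket, the Poincar\'e duality and the operator $B$---the isomorphism $\Phi$ carries the de Thanhoffer de V\"olcsey--Van den Bergh bracket to the Penkava--Schwarz bracket. Hence $\Phi$ is an isomorphism of graded Lie algebras, which is the assertion.

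The main obstacle is Step 2: one must verify that the deformation-theoretic brackets of \cite{VdBdTdV12} and \cite{Penkava} genuinely agree with the abstract brackets induced from the respective Batalin-Vilkovisky structures via Poincar\'e duality and the Connes sequence. This is a chain-level identification---unwinding both definitions and matching them with the $B$-operator---rather than a formal consequence of the preceding sections; once it is in place, Step 3 follows immediately from Theorem \ref{Main Thm}, which is why the result is \emph{essentially} a corollary of the main theorem.
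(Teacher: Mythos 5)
Your proposal is correct and follows essentially the same route as the paper: the underlying isomorphism comes from the mixed-complex quasi-isomorphisms of \S\ref{Sect_HK} together with Proposition \ref{Alg_Coalg}, the two brackets are recognized as the ones induced from the respective Batalin-Vilkovisky structures via the Connes exact sequences (the paper packages this as Lemmas \ref{Lie1} and \ref{Lie2}, citing \cite{VdBdTdV12} and \cite{Penkava} for the identification you flag as the main obstacle), and the compatibility of $\beta$ and $\pi_*$ is exactly the commutative ladder of Connes sequences the paper writes down before invoking Theorem \ref{Main Thm}.
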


Consider the short exact sequence
$$0\longrightarrow
u\cdot \mathrm{CC}_{{\bullet}+2}^{-}(A)
\stackrel{\iota}\longrightarrow
\mathrm{CC}_{\bullet}^{-}(A)
\stackrel{\pi}\longrightarrow
\mathrm{CH}_{\bullet}(A)\longrightarrow 0,$$
where
$\iota: u\cdot \mathrm{CC}_{{\bullet}+2}^{-}(A)
\to
\mathrm{CC}_{\bullet}^{-}(A)
$ is the embedding
and
$$
\begin{array}{cccl}
\pi:&\mathrm{CC}_{\bullet}^{-}(A)&
\longrightarrow
&
\mathrm{CH}_{\bullet}(A)\\
&\displaystyle\sum_i x_i\cdot u^i&\longmapsto& x_0
\end{array}
$$
is the projection.
It induces a long exact sequence (observe that this is
the cohomological version of the Connes long exact sequence)
$$
\cdots\longrightarrow
\mathrm{HC}_{{\bullet}+2}^{-}(A)
\longrightarrow
\mathrm{HC}_{\bullet}^{-}(A)
\stackrel{\pi_*}\longrightarrow
\mathrm{HH}_{\bullet}(A)
\stackrel{\beta}\longrightarrow
\mathrm{HC}_{{\bullet}+1}^{-}(A)
\longrightarrow\cdots,
$$
where we observe that
$\mathrm{H}_{\bullet}(u \cdot \mathrm{CC}_{{\bullet}+2}^{-}(A))\simeq\mathrm{HC}_{{\bullet}+2}^{-}(A).$
It is obvious that $\beta\circ\pi_*=0$ and we claim that
$$\pi_*\circ\beta=B:\mathrm{HH}_{\bullet}(A)\to\mathrm{HH}_{{\bullet}+1}(A).$$
In fact, for any $x\in \mathrm{CH}_{\bullet}(A)$ which is $b$-closed,
from the following diagram
$$
\xymatrix{
&\ar[d]&\ar[d]&\ar[d]&\\
0\ar[r]
&u\cdot \mathrm{CC}_{\bullet}^{-}(A)\ar[r]^{\iota}\ar[d]^{b+uB}
&\mathrm{CC}_{\bullet}^{-}(A)\ar[d]^{b+uB}\ar[r]^{\pi}&\mathrm{CH}_{\bullet}(A)
\ar[d]^b\ar[r]&0\\
0\ar[r]
&u\cdot \mathrm{CC}_{{\bullet}-1}^{-}(A)\ar[r]^{\iota}\ar[d]^{b+uB}
&\mathrm{CC}_{{\bullet}-1}^{-}(A)\ar[d]^{b+uB}\ar[r]^{\pi}&\mathrm{CH}_{{\bullet}-1}(A)
\ar[d]^b\ar[r]&0\\
&&&&}
$$
we have (up to a boundary)
$$\iota^{-1}\circ(b+uB)\circ\pi^{-1}(x)=\iota^{-1}\circ(b+uB)(x)=\iota^{-1}(u\cdot B(x))=u\cdot B(x)
\in u\cdot\mathrm{CC}_{{\bullet}-1}^{-}(A).$$
Via the isomorphism
$u \cdot \mathrm{CC}_{\bullet}^{-}(A)\simeq\mathrm{CC}_{{\bullet}+2}^{-}(A)$,
this element $u\cdot B(x)$ is mapped to $B(x)\in\mathrm{CC}_{{\bullet}+1}^{-}(A)$,
and under $\pi$ it is mapped to $B(x)$.
Thus $\pi_*\circ\beta=B$ as claimed.

\begin{lemma}[\cite{VdBdTdV12}]\label{Lie1}
Suppose $A$ is an associative algebra. If $(\mathrm{HH}_{\bullet}(A), {\hdot}, B)$  is a
Batalin-Vilkovisky algebra, where ${\hdot}$ is a
graded commutative associative product,
then
$$\{a, b\}:=(-1)^{|a|}\beta(\pi_*(a){\hdot} \pi_*(b)), \quad\mbox{for homogeneous}\;\;
a, b\in\mathrm{HC}_{\bullet}^{-}(A)$$
defines a degree one graded Lie algebra structure on $\mathrm{HC}_{\bullet}^{-}(A)$.
\end{lemma}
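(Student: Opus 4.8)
The plan is to prove that $\{a,b\}=(-1)^{|a|}\beta(\pi_*(a)\hdot\pi_*(b))$ is a degree one graded Lie bracket by a purely formal argument that uses only three inputs, all already at hand: the two relations $\beta\circ\pi_*=0$ and $\pi_*\circ\beta=B$ established just above, the hypothesis that $B$ is a second order operator for the graded commutative associative product $\hdot$ (formula (\ref{second_order})), and the definition (\ref{deviation}) of the deviation bracket of $B$. Since $\{-,-\}$ is assembled from $\pi_*$, the product $\hdot$, and the connecting map $\beta$, it is automatically well defined on homology, and a degree count through these three maps shows it has the asserted degree; the real content is antisymmetry and the Jacobi identity.

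First I would record two vanishing statements that drive everything. From $\beta\circ\pi_*=0$ we get $B\circ\pi_*=\pi_*\circ\beta\circ\pi_*=0$, so $B$ kills the image of $\pi_*$; and $\beta\circ B=\beta\circ\pi_*\circ\beta=0$, so $\beta$ kills the image of $B$. Because $B$ annihilates $\pi_*(a)$ and $\pi_*(b)$, the deviation bracket (\ref{deviation}) of $B$ collapses on such elements to its single surviving term, and applying $\pi_*$ to the definition of $\{-,-\}$ yields the intertwining identity $\pi_*\{a,b\}=(-1)^{|a|}B(\pi_*(a)\hdot\pi_*(b))$, which I will use repeatedly. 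Antisymmetry is then immediate: rewriting $\pi_*(a)\hdot\pi_*(b)$ as a scalar multiple of $\pi_*(b)\hdot\pi_*(a)$ via graded commutativity of $\hdot$ inside $\beta$ produces exactly the sign relating $\{a,b\}$ to $\{b,a\}$ required of a degree one bracket.

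For the Jacobi identity I would expand $\{a,\{b,c\}\}$ by substituting $\pi_*\{b,c\}=(-1)^{|b|}B(\pi_*(b)\hdot\pi_*(c))$, reducing it to a scalar multiple of $\beta\big(\pi_*(a)\hdot B(\pi_*(b)\hdot\pi_*(c))\big)$. Here is where the second order property enters: applying (\ref{second_order}) to the three $B$-closed elements $\pi_*(a),\pi_*(b),\pi_*(c)$ rewrites $\pi_*(a)\hdot B(\pi_*(b)\hdot\pi_*(c))$ as a signed sum of $B(\pi_*(a)\hdot\pi_*(b)\hdot\pi_*(c))$, $B(\pi_*(a)\hdot\pi_*(b))\hdot\pi_*(c)$ and $B(\pi_*(a)\hdot\pi_*(c))\hdot\pi_*(b)$. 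Applying $\beta$ annihilates the first term (since $\beta\circ B=0$), while the remaining two are converted, using $B(\pi_*(a)\hdot\pi_*(b))=(-1)^{|a|}\pi_*\{a,b\}$ and the bracket definition, into $\{\{a,b\},c\}$ and $\{\{a,c\},b\}$. After one application of antisymmetry to the second of these, the identity reorganizes into the Leibniz form $\{a,\{b,c\}\}=\{\{a,b\},c\}+(-1)^{(|a|+1)(|b|+1)}\{b,\{a,c\}\}$, which is exactly the graded Jacobi identity for a degree one bracket.

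The conceptual part is short, so I expect the only genuine difficulty to be sign bookkeeping: arranging the second order expansion so that precisely the two terms surviving $\beta$ match the two nested brackets, and transporting the Koszul signs correctly through the repeated uses of graded commutativity and antisymmetry. It is worth emphasizing that exactness of the Connes sequence is not needed for this lemma; everything follows from the two identities $\beta\circ\pi_*=0$ and $\pi_*\circ\beta=B$ together with the second order property of $B$.
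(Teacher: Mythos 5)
Your proposal is correct and follows essentially the same route as the paper's proof: both arguments rest on the two identities $\beta\circ\pi_*=0$ and $\pi_*\circ\beta=B$ (hence $B\circ\pi_*=0$ and $\beta\circ B=0$), graded commutativity of $\hdot$ for antisymmetry, and the second order property (\ref{second_order}) of $B$ applied to $\pi_*(a),\pi_*(b),\pi_*(c)$ for the Jacobi identity. The only cosmetic difference is that you verify the Jacobi identity in its Leibniz form $\{a,\{b,c\}\}=\{\{a,b\},c\}+(-1)^{(|a|+1)(|b|+1)}\{b,\{a,c\}\}$ while the paper checks the vanishing of the cyclic sum of double brackets; given antisymmetry these are equivalent.
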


\begin{proof}
We first show the graded skew-symmetry.
In fact, for two homogeneous $a, b\in\mathrm{HC}_{\bullet}^{-}(A)$,
\begin{eqnarray*}
&&\{a,b\}+(-1)^{(|a|+1)(|b|+1)}\{b,a\}\\
&=&(-1)^{|a|}\beta(\pi_*(a){\hdot}\pi_*(b))+(-1)^{(|a|+1)(|b|+1)+|b|}
\beta(\pi_*(b){\hdot}\pi_*(a))\\
&=&(-1)^{|a|}\beta(\pi_*(a){\hdot}\pi_*(b))+(-1)^{(|a|+1)(|b|+1)+|b|+|a|\cdot |b|}
\beta(\pi_*(a){\hdot}\pi_*(b))\\
&=&0.
\end{eqnarray*}
Next, we show graded Jacobi identity:
for homogeneous $a,b,c\in\mathrm{HC}_{\bullet}^{-}(A)$,
\begin{eqnarray*}
\{\{a,b\},c\}&=&(-1)^{|b|+1}\beta(\pi_*(\beta(\pi_*(a){\hdot}\pi_*(b))){\hdot}\pi_*(c))\\
&=&(-1)^{|b|+1}\beta(B(\pi_*(a){\hdot}\pi_*(b)){\hdot}\pi_*(c)).
\end{eqnarray*}
Similarly,
\begin{eqnarray*}
\{\{c,a\},b\}&=&(-1)^{|a|+1}\beta(B(\pi_*(c){\hdot}\pi_*(a)){\hdot}\pi_*(b)),\\
\{\{b,c\},a\}&=&(-1)^{|c|+1}\beta(B(\pi_*(b){\hdot}\pi_*(c)){\hdot}\pi_*(a)).
\end{eqnarray*}
Now since $(\mathrm{HH}_{\bullet}(A),{\hdot},B)$ is a Batalin-Vilkovisky algebra,
by (\ref{second_order}) we obtain
\begin{eqnarray*}
&&(-1)^{(|a|+1)(|c|+1)}\{\{a,b\},c\}+(-1)^{(|c|+1)(|b|+1)}\{\{c,a\},b\}+(-1)^{(|b|+1)(|a|+1)}\{\{b,c\},a\}\\
&=&(-1)^{(|a|+1)(|c|+1)+|b|+1}\beta(B(\pi_*(a){\hdot}\pi_*(b)){\hdot}\pi_*(c))\\
&&+
(-1)^{(|c|+1)(|b|+1)+|a|+1}\beta(B(\pi_*(c){\hdot}\pi_*(a)){\hdot}\pi_*(b))\\
&&
+(-1)^{(|b|+1)(|a|+1)+|c|+1}\beta(B(\pi_*(b){\hdot}\pi_*(c)){\hdot}\pi_*(a))\\
&\stackrel{(\ref{second_order})}=
&(-1)^{|a|+|b|+|c|+|a|\cdot|c|}\Big(
\beta(B(\pi_*(a){\hdot}\pi_*(b){\hdot}\pi_*(c)))-\beta(B\circ\pi_*(a){\hdot}\pi_*(b){\hdot}\pi_*(c))\\
&&
-(-1)^{|a|}\beta(\pi_*(a){\hdot} B\circ\pi_*(b){\hdot}\pi_*(c))
-(-1)^{|a|+|b|}\beta(\pi_*(a){\hdot}\pi_*(b){\hdot} B\circ\pi_*(c))\Big)\\
&=&0,
\end{eqnarray*}
where the last equality holds since $B=\pi_*\circ \beta$ and therefore $\beta\circ B=B\circ\pi_*=0$.
\end{proof}

Similarly from the Connes long exact sequence for cyclic cohomology
$$
\cdots\longrightarrow
\mathrm{HH}^{{\bullet}}(A;\k)\stackrel{\beta}\longrightarrow
\mathrm{HC}^{{\bullet}-1}(A)\longrightarrow
\mathrm{HC}^{{\bullet}+1}(A)\stackrel{\pi_*}\longrightarrow
\mathrm{HH}^{{\bullet}+1}(A;\k)\longrightarrow\cdots,
$$
we have the following lemma, for which we omit
the proof:

\begin{lemma}\label{Lie2}
Suppose $A^{!}$ is an associative algebra. Denote by $\mathrm{HH}^{\bullet}(A^{!};\k)$
be the Hochschild
cohomology of $A^{!}$. Suppose $(\mathrm{HH}^{\bullet}(A^{!};\k), {\hdot},B^*)$
is a Batalin-Vilkovisky algebra, where ${\hdot}$ is a graded commutative associative product, then
$\mathrm{HC}^{\bullet}(A^{!})$ has a degree one Lie algebra structure, where
for any $x, y\in\mathrm{HC}^{\bullet}(A^!)$,
$$\{x,y\}:=(-1)^{|x|}\beta(\pi_*(x){\hdot}\pi_*(y)).$$
\end{lemma}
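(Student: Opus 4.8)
The plan is to dualize the proof of Lemma~\ref{Lie1} line by line, replacing the homological Connes sequence by the cohomological one displayed immediately above. Everything rests on two structural identities, $\beta\circ\pi_*=0$ and $\pi_*\circ\beta=B^*$. The first is automatic, since $\pi_*$ and $\beta$ occur as two consecutive arrows $\mathrm{HC}^{\bullet+1}(A^!)\stackrel{\pi_*}{\longrightarrow}\mathrm{HH}^{\bullet+1}(A^!;\k)\stackrel{\beta}{\longrightarrow}\mathrm{HC}^{\bullet}(A^!)$ in an exact sequence. For the second I would run the dual of the diagram chase carried out just before Lemma~\ref{Lie1}: given a $b^*$-closed Hochschild cochain $x$, lift it through the projection $\pi$ in the defining short exact sequence, apply the total differential $b^*+vB^*$ (with $v$ the degree-$2$ dual variable of \S\ref{Sect_Hoch}), and observe that the obstruction to closedness lands in $v\cdot\mathrm{CC}^{\bullet}(A^!)$ as $v\cdot B^*(x)$; transporting along $v\cdot\mathrm{CC}^{\bullet}(A^!)\stackrel{/v}{\simeq}\mathrm{CC}^{\bullet}(A^!)$ and projecting by $\pi$ returns $B^*(x)$. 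Composing the two identities yields the vanishing relations $\beta\circ B^*=\beta\circ\pi_*\circ\beta=0$ and $B^*\circ\pi_*=\pi_*\circ\beta\circ\pi_*=0$, which are the engine of the Jacobi identity.

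Granting these, graded skew-symmetry of $\{x,y\}:=(-1)^{|x|}\beta(\pi_*(x)\hdot\pi_*(y))$ follows at once from the graded commutativity of the cup product on $\mathrm{HH}^{\bullet}(A^!;\k)$: since $\pi_*$ preserves degree, $\pi_*(x)\hdot\pi_*(y)=(-1)^{|x||y|}\pi_*(y)\hdot\pi_*(x)$, and the sign bookkeeping is identical to the first display in the proof of Lemma~\ref{Lie1}, giving $\{x,y\}+(-1)^{(|x|+1)(|y|+1)}\{y,x\}=0$. For the graded Jacobi identity I would expand $\{\{x,y\},z\}$, use $\pi_*\circ\beta=B^*$ to replace $\pi_*\beta(\pi_*(x)\hdot\pi_*(y))$ by $B^*(\pi_*(x)\hdot\pi_*(y))$, sum the three cyclic permutations weighted by the Koszul signs $(-1)^{(|x|+1)(|z|+1)}$, $(-1)^{(|z|+1)(|y|+1)}$, $(-1)^{(|y|+1)(|x|+1)}$, and then apply the second-order relation~(\ref{second_order}) for the Batalin--Vilkovisky algebra $(\mathrm{HH}^{\bullet}(A^!;\k),\hdot,B^*)$ to expand $B^*(\pi_*(x)\hdot\pi_*(y)\hdot\pi_*(z))$. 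Exactly as in the closing computation of Lemma~\ref{Lie1}, every surviving summand factors through $\beta\circ B^*$ or through $B^*\circ\pi_*$, both of which vanish, so the alternating sum is zero.

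The single genuinely non-formal step, and the one I would verify most carefully, is the identity $\pi_*\circ\beta=B^*$ for the cohomological sequence. This requires checking that the connecting homomorphism of the cohomological Connes sequence is the linear dual of the homological $\beta$ of Lemma~\ref{Lie1}, and in particular that passing from the free variable $u$ of degree $-2$ to its dual $v$ of degree $2$ introduces no spurious sign, so that the dual of Connes' $B$ is literally $B^*$ with the conventions of Definition~\ref{def_HC}. Once this identity and the attendant signs are pinned down, the skew-symmetry and Jacobi computations are purely formal transcriptions of Lemma~\ref{Lie1}, which is exactly why its proof may be omitted.
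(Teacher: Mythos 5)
Your proposal is correct and is exactly the argument the paper intends: the proof of Lemma~\ref{Lie2} is omitted precisely because it is the verbatim dualization of the proof of Lemma~\ref{Lie1}, resting on $\beta\circ\pi_*=0$ and $\pi_*\circ\beta=B^*$ extracted from the cohomological Connes exact sequence, followed by the identical skew-symmetry computation and the Jacobi computation via the second-order relation~(\ref{second_order}). Your flagging of the identification of the cohomological connecting map with $B^*$ (including the passage from $u$ to its dual $v$) as the one step deserving explicit verification is well placed, and it goes through exactly as in the diagram chase preceding Lemma~\ref{Lie1}.
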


The key point of the above two lemmas is that there is no a priori
graded commutative associative product on $\mathrm{HH}_{\bullet}(A)$
and/or $\mathrm{HH}^{\bullet}(A^!;\k)$.
If $A$ is Calabi-Yau, then $\mathrm{HH}_{\bullet}(A)$ has a product induced
from the Gerstenhaber product on $\mathrm{HH}^{\bullet}(A;A)$ via Van den Bergh's
Poincar\'e duality, and similarly, if $A^!$ is a cyclic associative algebra,
then $\mathrm{HH}^{\bullet}(A^!;\k)$ has a product induced from
the Gerstenhaber product on $\mathrm{HH}^{\bullet}(A^!; A^!)$ via
Tradler's isomorphism.
However, during this process one has to notice that
the induced product has a degree.
For example, since the Gerstenhaber product on $\mathrm{HH}^{\bullet}(A;A)$ is
of degree zero, the induced product on $\mathrm{HH}_{\bullet}(A)$ has degree $-n$.
Therefore we have to shift the degree of $\mathrm{HH}_{\bullet}(A)$ down by $-n$
to get a Batalin-Vilkovisky algebra.
Let us take this degree shift in the following.

\begin{proof}[Proof of Theorem \ref{Main_Thm2}]
Since $A$ is Calabi-Yau,
$\mathrm{HH}_{\bullet}(A)$ admits a Batalin-Vilkovisky algebra structure
via Van den Bergh's duality
$$\mathrm{HH}_{\bullet}(A)\simeq\mathrm{HH}^{n-{\bullet}}(A; A).$$
The Lie bracket on $\mathrm{HC}_{\bullet}^{-}(A)$ is given
by Lemma \ref{Lie1} (see \cite[Theorem 10.2]{VdBdTdV12}),
while the Lie bracket on $\mathrm{HC}^{\bullet}(A^!)$ is given
by Lemma \ref{Lie2} (see a brief discussion in \cite[\S8]{Penkava}).

We have shown that $\mathrm{HH}^{\bullet}(A; A)$ and $\mathrm{HH}^{\bullet}(A^!; A^!)$
are isomorphic as Batalin-Vilkovisky algebras,
and therefore to show the Lie algebra isomorphism, we have to compare $\beta$ and $\pi_*$.
In fact,
by Lemma \ref{Main_Lemma2} and Proposition
\ref{Alg_Coalg}, we have a quasi-isomorphism
of mixed complexes
$$(\mathrm{HC}_{\bullet}(A), b, B)\simeq(\mathrm{HC}_{\bullet}(A^{\ac}), b, B)
\simeq(\mathrm{HC}^{-{\bullet}}(A^!), \delta, B^*),$$
and therefore a commutative long exact sequence
$$
\xymatrix{
\cdots\ar[r]&\mathrm{HH}_{\bullet}(A)\ar[d]\ar[r]^{\beta} &\mathrm{HC}_{{\bullet}+1}^{-}(A)\ar[r] \ar[d]
&\mathrm{HC}_{{\bullet}-1}^{-}(A)\ar[r]^{\pi_*} \ar[d]&
\mathrm{HH}_{{\bullet}-1}(A)\ar[r]\ar[d] &\cdots
\\
\cdots\ar[r]&\mathrm{HH}^{-{\bullet}}(A^!;\k)\ar[r]^{\beta} &\mathrm{HC}^{-{\bullet}-1}(A^!)\ar[r]
&\mathrm{HC}^{-{\bullet}+1}(A^!)\ar[r]^{\pi_*} &
\mathrm{HH}^{-{\bullet}+1}(A^!;\k)\ar[r]&\cdots}
$$
where the vertical maps are all isomorphism.
This completes the proof.
\end{proof}

\begin{remark}
As been observed in \cite{VdBdTdV12},
the above Lemmas \ref{Lie1} and \ref{Lie2} are very much similar to the ones given by Menichi
\cite{Menichi}, which has its precursor in string topology \cite{CS}.
However, there is a slight difference between them, especially
the degree of the bracket in the above lemmas is one, while the degree of the one of Menichi
is two.

To compare them, let us briefly recall the construction of Menichi.
From the homological version of the Connes long exact sequence
$$\cdots\stackrel{\mathrm M}\longrightarrow
\mathrm{HH}_{\bullet}(A)\stackrel{\mathrm E}\longrightarrow \mathrm{HC}_{\bullet}(A)
\longrightarrow
\mathrm{HC}_{{\bullet}-2}(A)
\stackrel{\mathrm M}\longrightarrow
\mathrm{HH}_{{\bullet}-1}(A)\stackrel{\mathrm E}\longrightarrow\cdots,
$$
again, we have that
$B=\mathrm{M}\circ\mathrm E: \mathrm{HH}_{\bullet}(A)\to\mathrm{HH}_{{\bullet}+1}(A)$.
The following is proved by Menichi \cite[Proposition 28]{Menichi},
with the same argument as in Lemma \ref{Lie1}.

\begin{theorem}[Menichi]
Suppose $A$ is an associative algebra.
If $(\mathrm{HH}_{\bullet}(A), \hdot, B)$  is a
Batalin-Vilkovisky algebra, where ${\hdot}$ is a
graded commutative associative product,
then
$$\{a, b\}:=(-1)^{|a|+1}
\mathrm E(\mathrm M(a){\hdot} \mathrm M(b)), \quad\mbox{for}\;\; a, b\in\mathrm{HC}_{\bullet}(A)$$
defines a degree two graded Lie algebra structure on $\mathrm{HC}_{\bullet}(A)$.
\end{theorem}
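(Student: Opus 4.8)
The plan is to imitate the proof of Lemma~\ref{Lie1} almost verbatim, transporting it along the homological Connes long exact sequence displayed just before the statement in place of the negative-cyclic sequence used there. Under this dictionary the connecting map $\mathrm E\colon\mathrm{HH}_\bullet(A)\to\mathrm{HC}_\bullet(A)$ plays the role of $\beta$, the map $\mathrm M\colon\mathrm{HC}_\bullet(A)\to\mathrm{HH}_{\bullet+1}(A)$ plays the role of $\pi_*$, and the identity $B=\mathrm M\circ\mathrm E$ replaces $B=\pi_*\circ\beta$. Bilinearity and the degree-two claim are immediate from linearity of $\mathrm E$ and $\mathrm M$ together with the degrees of $\mathrm M$ and of the product $\hdot$. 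The only structural input beyond the hypothesis that $(\mathrm{HH}_\bullet(A),\hdot,B)$ is a Batalin--Vilkovisky algebra is exactness of the sequence, which yields $\mathrm E\circ\mathrm M=0$ and hence $\mathrm E\circ B=\mathrm E\circ\mathrm M\circ\mathrm E=0$ and $B\circ\mathrm M=\mathrm M\circ\mathrm E\circ\mathrm M=0$; these are precisely the relations $\beta\circ B=B\circ\pi_*=0$ that drive the earlier argument.

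First I would check graded skew-symmetry. Since $\mathrm M(a)$ has degree $|a|+1$, graded commutativity of $\hdot$ gives $\mathrm M(a)\hdot\mathrm M(b)=(-1)^{(|a|+1)(|b|+1)}\mathrm M(b)\hdot\mathrm M(a)$; inserting this into the defining formula and collecting the prefactors $(-1)^{|a|+1}$ and $(-1)^{|b|+1}$ collapses the total sign. Because the bracket now carries the \emph{even} degree two, the shift does not change the parity, and the relation comes out as $\{a,b\}+(-1)^{|a||b|}\{b,a\}=0$. This is a one-line sign computation with no new content.

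The substance of the proof, exactly as in Lemma~\ref{Lie1}, is the graded Jacobi identity, and this is where I expect the only genuine work to lie. Following that template I would expand the three cyclically permuted double brackets $\{\{a,b\},c\}$, $\{\{c,a\},b\}$, $\{\{b,c\},a\}$, using $\mathrm M(\{a,b\})=(-1)^{|a|+1}(\mathrm M\circ\mathrm E)(\mathrm M(a)\hdot\mathrm M(b))=(-1)^{|a|+1}B(\mathrm M(a)\hdot\mathrm M(b))$ to rewrite each inner bracket through the operator $B$. Up to signs this presents the cyclic sum as $\mathrm E$ applied to the three \emph{$B$-on-a-pair} terms $B(\mathrm M(a)\hdot\mathrm M(b))\hdot\mathrm M(c)$, $B(\mathrm M(c)\hdot\mathrm M(a))\hdot\mathrm M(b)$, $B(\mathrm M(b)\hdot\mathrm M(c))\hdot\mathrm M(a)$ occurring in the second-order relation~(\ref{second_order}) for $B$ on the triple product $\mathrm M(a)\hdot\mathrm M(b)\hdot\mathrm M(c)$. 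Rearranging~(\ref{second_order}) rewrites the sum of these three terms as $B(\mathrm M(a)\hdot\mathrm M(b)\hdot\mathrm M(c))$ together with the three terms in which $B$ hits a single factor; after applying $\mathrm E$ the first vanishes because $\mathrm E\circ B=0$ and each of the remaining three vanishes because $B\circ\mathrm M=0$, so the cyclic sum is zero. The hard part is therefore purely the sign bookkeeping: one must carry the extra signs coming from the degree-two (rather than degree-one) shift through~(\ref{second_order}) and verify that the Koszul signs there agree with those produced by the three permuted brackets. No conceptual ingredient beyond those already used in Lemma~\ref{Lie1} is required.
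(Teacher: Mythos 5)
Your proposal is correct and follows exactly the route the paper intends: the paper itself offers no independent proof but states that the result is ``proved \ldots with the same argument as in Lemma \ref{Lie1}'', replacing $\beta$ and $\pi_*$ by $\mathrm E$ and $\mathrm M$ and using $B=\mathrm M\circ\mathrm E$ together with $\mathrm E\circ\mathrm M=0$, which is precisely your dictionary. Your sign checks for skew-symmetry and the reduction of the Jacobi identity to the second-order relation (\ref{second_order}) plus $\mathrm E\circ B=B\circ\mathrm M=0$ are the right bookkeeping and match the template of Lemma \ref{Lie1}.
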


This Lie algebra is also interesting, since it gives an algebraic interpretation of the Lie algebra on
the $S^1$-equivariant homology of the free loop space of a compact smooth
manifold, discovered in string topology \cite{CS}.
\end{remark}


\end{document}